\newtheorem{theorem}{Theorem}[section]
\newtheorem{lemma}[theorem]{Lemma}
\theoremstyle{definition}
\theoremstyle{remark}
\newtheorem{remark}[theorem]{Remark}
\numberwithin{equation}{section}
\begin{document}

\title[Correctors and Field Fluctuations...]{Correctors and Field Fluctuations for the $p_{\epsilon}(x)$-Laplacian with Rough Exponents:\\ The Sublinear Growth Case}

\author[Silvia Jimenez]{Silvia Jim\'enez\\\\\tiny{Dept. of Mathematical Sciences, Worcester Polytechnic Institute\\100 Institute Road, Worcester, MA 01609-2280\\Phone: +1-508-831-5241 | Fax: +1-508-831-5824}\\\tiny{E-mail: silviajimenez@wpi.edu}}

\keywords{correctors, field concentrations, dispersed media, homogenization, layered media, p-laplacian, periodic domain, power-law materials, young measures}

\subjclass[2000]{Primary 35J66; Secondary 35A15, 35B40, 74Q05}

\textit{}
\begin{abstract}
A corrector theory for the strong approximation of gradient fields inside periodic composites made from two materials 
with different power law behavior is provided.  Each material component has a distinctly different exponent appearing in the constitutive law relating gradient to flux.  The correctors are used to develop bounds on the local singularity strength for gradient fields inside micro-structured media.  The bounds are multi-scale in 
nature and can be used to measure the amplification of applied macroscopic fields by the microstructure.  The results in this paper are developed for materials having power law exponents strictly between $-1$ and zero.  
\end{abstract}


\maketitle

\section{Introduction}
In this paper, we develop a corrector theory for the strong approximation of gradient fields inside periodic composites made from two materials with different power law behavior.  Here the flux is related to the gradient $\nabla u$ by the power law $\left|\nabla u\right|^{r}\nabla u$.  Each material component has a distinctly different exponent $r$ appearing in the constitutive law relating gradient to flux.  The correctors are used to develop bounds on the local singularity strength for gradient fields inside micro-structured media.  The bounds are multi-scale in 
nature and can be used to measure the amplification of applied macroscopic fields by the microstructure.  The novelty of the work presented in this paper is that it is carried out for materials having power law exponents $r$ strictly between $-1$ and zero.  In previous work \cite{Jimenez2010}, we developed strong approximations to the gradient fields and we provided lower bounds on the $L^q$ norms ($q\geq2$) of the gradient fields inside each material that are given in terms of the correctors presented in Theorem~2.6 of \cite{Jimenez2010} for mixtures of two nonlinear power law materials with power law exponents $r$ greater than or equal to zero.  

The corrector theory for the linear case can be found in \cite{Murat1997}.  The earlier work of \cite{DalMaso1990} provides the corrector theory for homogenization of monotone operators that in our case applies to composite materials made from constituents having the same power-law growth but with rough coefficients $\sigma(x)$.  More recently, the homogenization of $p_\epsilon(x)$-Laplacian boundary value problems for smooth exponential functions $p_{\epsilon}(x)$ uniformly converging to a limit function $p_{0}(x)$ has been studied in \cite{Piatnitski2008}.  The convergence of the family of solutions for these homogenization problems is given in the topology of $L^{p_{0}(\cdot)}(\Omega)$.

Here we assume that the geometry of the composite is periodic and 
is specified by the indicator function of the sets occupied by each of the materials.  The indicator function 
of material $1$ and $2$ are denoted by $\chi_1$ and $\chi_2$, where $\chi_{1}(y) =1$ in material $1$ and is zero 
outside and $\chi_{2}(y)=1-\chi_{1}(y)$.  The constitutive law for the heterogeneous medium is described by $A:\mathbb{R}^{n}\times\mathbb{R}^{n}\rightarrow\mathbb{R}^{n}$,
\begin{equation}
	\label{A}
	\displaystyle
A\left(y,\xi\right)=\sigma(y)\left|\xi\right|^{p(y)-2}\xi,
\end{equation}	
with $\sigma(y)=\chi_{1}\left(y\right)\sigma_{1}+\chi_{2}\left(y\right)\sigma_{2}$ and $0<\sigma_{1},\sigma_{2}<\infty$; with 
$p(y)=\chi_{1}\left(y\right)p_{1}+\chi_{2}\left(y\right)p_{2}$ and $1<p_{1}\leq p_{2}<2$ or $1<p_{1}\leq2\leq p_{2}$; and with both, $\sigma(y)$ and $p(y)$, periodic in $y$, with unit period cell $Y=(0,1)^n$.  This constitutive model occurs in several mathematical models of physical processes including nonlinear dielectrics  \cite{Garroni2001,Garroni2003,Kohn1998,Talbot1994,Talbot1994-2}, fluid flow (electrorheological fluids) \cite{Antontsev2006,Ruzicka2000,Ruzicka2008}, glaciology \cite{Glowinski2003}, image restoration \cite{Levine2004}, and in the theory of deformation plasticity under longitudinal shear (anti-plane strain deformation) \cite{Atkinson1984,Suquet1993,PonteCastaneda1997,PonteCastaneda1999,Idiart2008}.  

In this paper, we study the problem of periodic homogenization associated with 
the solutions $u_{\epsilon}$ to the problems
\begin{equation}
	\label{01}
	\displaystyle
-\mbox{div}\left(A\left(\frac{x}{\epsilon},\nabla u_{\epsilon}\right)\right)=f \text{ on $\Omega$, $u_{\epsilon}\in W_{0}^{1,p_{1}}(\Omega),$}
\end{equation}
where $\Omega$ is a bounded open subset of $\mathbb{R}^{n}$, $f\in W^{-1,q_{2}}(\Omega)$, 
and $1/p_{1}+1/q_{2}=1$.  The differential operator on the left-hand side of (\ref{01}) is the $p_{\epsilon}(x)$-Laplacian.   All solutions are understood in the usual weak sense \cite{Zhikov1994}.

It was shown in Chapter~15 of \cite{Zhikov1994} that $\{u_{\epsilon}\}_{\epsilon>0}$ converges weakly in $W^{1,p_{1}}(\Omega)$ 
to the solution $u$ of the {\em homogenized} problem  
\begin{equation}
	\label{HOMOG}
	\displaystyle
-\mbox{div}\left(b\left(\nabla u\right)\right)=f \text{ on $\Omega$, $u\in W_{0}^{1,p_{1}}(\Omega)$},
\end{equation}
where the monotone map $b:\mathbb{R}^{n}\rightarrow\mathbb{R}^{n}$ (independent of $f$ and $\Omega$) can be 
obtained by solving an auxiliary problem for the operator (\ref{01}) on a periodicity cell.

The idea of homogenization is intimately related to the $\Gamma$-convergence of a suitable family of energy functionals $I_{\epsilon}$ as $\epsilon\rightarrow0$ \cite{Zhikov1994}.  Here the connection is natural in that the family of boundary value problems (\ref{01}) correspond to the Euler equations of the associated energy functionals $I_{\epsilon}$ and the solutions $u_{\epsilon}$ are their minimizers.  The homogenized solution is precisely the minimizer of the $\Gamma$-limit of the sequence $\left\{I_{\epsilon}\right\}_{\epsilon>0}$. The connections between $\Gamma$ limits and homogenization for the power-law materials studied here can be found in Chapter~15 of \cite{Zhikov1994}.  The explicit formula for the $\Gamma$-limit of the associated energy functionals for layered materials was obtained recently in \cite{Pedregal2006}.

The homogenization result found in Chapter~5 of \cite{Zhikov1994} shows that the \textit{average} of the error incurred in approximating 
$\{\nabla u_{\epsilon}\}_{\epsilon>0}$ in terms of $\nabla u$, where $u$ is the solution of (\ref{HOMOG}) 
decays to $0$.  Then again, the presence of large local fields either electric or mechanical 
often precede the onset of material failure (see, \cite{Kelly1986}).  The goal of our analysis is 
to develop tools for quantifying the effect of load transfer between length scales inside heterogeneous media.  
To this end, we present a new corrector result that approximates, inside each phase, $\nabla u_{\epsilon}$ 
up to an error that converges to zero strongly in the norm (see Section~\ref{CorrectorSection}).

The corrector result is then used to develop new tools that provide lower bounds on the local gradient field intensity inside 
micro-structured media.  The bounds are expressed in terms of the $L^q$ norms of gradients of the solutions of the 
local corrector problems.  These results provide a lower bound on the amplification of the macroscopic gradient field 
by the microstructure see, Section~\ref{SectionFluctuations}.  These bounds provide a rigorous way to assess the effect of field concentrations generated by the microgeometry without having to compute the actual solution $u_{\epsilon}$.  In \cite{Lipton2006}, similar lower bounds were established for field concentrations for mixtures of linear electrical conductors in the context of two scale convergence.  

In this paper, the corrector results are presented for layered materials (Fig.~\ref{fig:layer}) and for dispersions of inclusions embedded inside a host medium (Fig.~\ref{fig:disperse}).  
For the dispersed microstructures the included material is taken to have the lower power-law exponent than that of the host 
phase.  The reason we use dispersed and layered microstructures is that in both cases we are able to show that the homogenized solution lies in $W_{0}^{1,p_{2}}(\Omega)$, see Theorem~\ref{regularity}.  Possible extensions of this work include the study of other microstructures for which this higher order integrability condition of the homogenized solution is satisfied.  The higher order integrability is used to provide an algorithm for building correctors and construct a sequence of strong approximations 
to the gradients inside each material, see Theorem \ref{corrector}.  When the host phase has a lower power-law exponent than the included phase, one can only conclude that the homogenized solution lies in $W_{0}^{1,p_{1}}(\Omega)$ and the techniques developed here do not apply. 

The presentation of the paper is organized as follows. In Section~\ref{Problem}, we state the problem and formulate the main results.  Section~\ref{technical} contains technical lemmas and integral inequalities for the correctors used to prove the main results.  Section~\ref{main} contains the proof of the main results.  The Appendix contains all proofs of lemmas stated in Section~\ref{technical} and some remarks related to the proof of Theorem~\ref{corrector} found in Section~\ref{main}.   
\section{Statement of the Problem and Main Results}
\label{Problem}
\subsection{Notation}	
In this paper we consider two nonlinear power-law materials periodically distributed inside a domain $\Omega\subset\mathbb{R}^{n}$.  
The periodic mixture is described as follows.  We introduce the unit period cell $Y=(0,1)^{n}$ of the microstructure.  Let $F$ 
be an open subset of $Y$ of material $1$, with smooth boundary $\partial F$, such that $\overline{F}\subset Y$.  The function 
$\chi_{1}(y)=1$ inside $F$ and $0$ outside and $\chi_{2}(y)=1-\chi_{1}(y)$.  We extend $\chi_{1}(y)$ and $\chi_{2}(y)$ by 
periodicity to $\mathbb{R}^{n}$ and the $\epsilon$-periodic mixture inside $\Omega$ is described by the oscillatory characteristic 
functions $\chi_{1}^{\epsilon}(x)=\chi_{1}(x/\epsilon)$ and $\chi_{2}^{\epsilon}(x)=\chi_{2}(x/\epsilon)$.  Here we will consider 
the case where $F$ is given by a simply connected inclusion embedded inside a host material (see Fig.~\ref{fig:disperse}).  A distribution of such inclusions is 
commonly referred to as a periodic dispersion of inclusions.
\begin{figure}[h]
\centering
\includegraphics[width=0.2\textwidth]{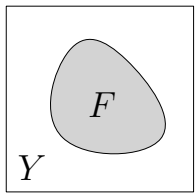}
\caption{Unit Cell: Dispersed Microstructure}
\label{fig:disperse}
\end{figure}

We also consider layered materials.  For this case the representative unit cell consists of a layer of material $1$, 
denoted by $R_{1}$, sandwiched between layers of material $2$, denoted by $R_{2}$.  The interior boundary of $R_{1}$ is denoted by 
$\Gamma$ (see Fig.~\ref{fig:layer}).  Here $\chi_{1}(y)=1$ for $y\in R_{1}$ and $0$ in $R_{2}$, and $\chi_{2}(y)=1-\chi_{1}(y)$.
\begin{figure}[h]
\centering
\includegraphics[width=0.2\textwidth]{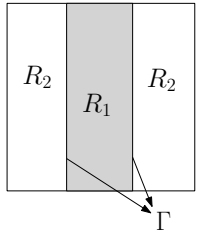}
\caption{Unit Cell: Layered Material}
\label{fig:layer}
\end{figure}

We denote by $\theta_1=\displaystyle \int_{Y}\chi_{1}(y)dy$ and $\theta_2=1-\theta_1$ the volume fractions of phase $1$ and phase $2$ inside the composite.

On the unit cell $Y$, the constitutive law for the nonlinear material is given by (\ref{A}) with exponents $p_{1}$ and $p_{2}$ 
satisfying $1<p_{1}\leq p_{2}\leq2$ or $1<p_{1}\leq2\leq p_{2}$.  Their H\"{o}lder conjugates are denoted by $q_{2}=p_{1}/(p_{1}-1)$ 
and $q_{1}=p_{2}/(p_{2}-1)$ respectively.  For $i=1,2$, $W_{per}^{1,p_{i}}(Y)$ denotes the set of all functions $u\in W^{1,p_{i}}(Y)$ 
with mean value zero that have the same trace on the opposite faces of $Y$.  Each function $u\in W_{per}^{1,p_{i}}(Y)$ can be extended 
by periodicity to a function of $W_{loc}^{1,p_{i}}(\mathbb{R}^{n})$.

The Euclidean norm and the scalar product in $\mathbb{R}^{n}$ are denoted by $\left|\cdot\right|$ and $\left(\cdot,\cdot\right)$, 
respectively.  If $A\subset\mathbb{R}^{n}$, $\left|A\right|$ denotes the Lebesgue measure and $\chi_{A}(x)$ denotes its 
characteristic function.

The constitutive law for the $\epsilon$-periodic composite is described by $A_{\epsilon}(x,\xi)=A\left(x/\epsilon,\xi\right)$, 
for every $\epsilon>0$, for every $x\in\Omega$, and for every $\xi\in\mathbb{R}^{n}$.

We have \cite{Bystrom2005} that $A$ fulfills the following conditions 
\begin{enumerate}
\item For all $\xi\in\mathbb{R}^{n}$, $A(\cdot,\xi)$ is $Y$-periodic and Lebesgue measurable.
\item $\left|A(y,0)\right|=0$ for all $y\in\mathbb{R}^{n}$.
\item Continuity: for almost every $y\in\mathbb{R}^{n}$ and for every $\xi_{i}\in\mathbb{R}^{n}$ ($i=1,2$) we have 
	\begin{equation}
	\label{ConA}
	\displaystyle \left|A(y,\xi_{1})-A(y,\xi_{2})\right|\leq C\left|\xi_{1}-\xi_{2}\right|^{\alpha(y)}\left(1+\left|\xi_{1}\right|+\left|\xi_{2}\right|\right)^{p(y)-1-\alpha(y)}
\end{equation}
where $\alpha(y)=\chi_{1}(y)\alpha_{1}(y)+\chi_{2}(y)\alpha_{2}(y)$ and
	\begin{equation*}
	\displaystyle \alpha_{i}(y)=\begin{cases}
	p_{i}-1 & \text{ if $1\leq p_{i}\leq 2$}\\
	1 & \text{ if $p_{i}\geq2$}
	\end{cases}
\end{equation*}
\item Monotonicity: for almost every $y\in\mathbb{R}^{n}$ and for every $\xi_{i}\in\mathbb{R}^{n}$ ($i=1,2$) we have
\begin{align}
\label{MonA}
	\displaystyle
	\left(A(y,\xi_{1})-A(y,\xi_{2}),\xi_{1}-\xi_{2}\right)&\geq C\left|\xi_{1}-\xi_{2}\right|^{\beta(y)}\left(\left|\xi_{1}\right|+\left|\xi_{2}\right|\right)^{p(y)-\beta(y)}\notag\\
	&\quad\geq C\left|\xi_{1}-\xi_{2}\right|^{\beta(y)}\left(1+\left|\xi_{1}\right|+\left|\xi_{2}\right|\right)^{p(y)-\beta(y)}
\end{align}
where $\beta(y)=\chi_{1}(y)\beta_{1}(y)+\chi_{2}(y)\beta_{2}(y)$ and
	\begin{equation*}
	\displaystyle \beta_{i}(y)=\begin{cases}
	2 & \text{ if $1\leq p_{i}\leq 2$}\\
	p_{i} & \text{ if $p_{i}\geq2$}
	\end{cases}
\end{equation*}	
\end{enumerate}

The structure conditions for $A$ given by (\ref{ConA}) and (\ref{MonA}) recover the ones stated in \cite{Jimenez2010} where $\alpha_{i}(y)=1$ and $\beta_{i}(y)=p_{i}$ for $i=1,2$.  In the context of this paper, the analysis when the exponents $p_{1}$ and $p_{2}$ are in the regime between $1$ and $2$ becomes more involved.  This particular set of structure conditions (or related variants) are used, for example, in \cite{DalMaso1990,Fusco1990,Braides1992,Bystrom2001}. 

\subsection{Dirichlet Boundary Value Problem}
We consider the following Dirichlet boundary value problem
\begin{equation}
	\label{Dirichlet}
	\begin{cases}
		-\mbox{div}\left(A_{\epsilon}\left(x,\nabla u_{\epsilon}\right)\right)=f \text{ on $\Omega$},\\
		u_{\epsilon}\in W_{0}^{1,p_{1}}(\Omega);  	
	\end{cases}
\end{equation}
where $f\in W^{-1,q_{2}}(\Omega)$.
 
The following homogenization result holds.
\begin{theorem}[Homogenization Theorem (see Chapter~15 of \cite{Zhikov1994})]
\label{homogenization}
As $\epsilon\rightarrow0$, the solutions $u_{\epsilon}$ of (\ref{Dirichlet}) converge weakly to $u$ 
in $W^{1,p_{1}}(\Omega)$, where $u$ is the solution of 
\begin{equation}
	\label{homogenized}
	\displaystyle -\mbox{div}\left(b\left(\nabla u\right)\right)=f \text{ on $\Omega$},
\end{equation}	
\begin{equation}
	\label{u}
	\displaystyle u \in W_{0}^{1,p_{1}}(\Omega);  	
\end{equation}
and the function $b:\mathbb{R}^{n}\rightarrow\mathbb{R}^{n}$ is defined for all $\xi\in\mathbb{R}^{n}$ by
\begin{equation}
	\label{b}
	\displaystyle b(\xi)=\int_{Y}A(y,P(y,\xi))dy,
\end{equation}
where $p:\mathbb{R}^{n}\times\mathbb{R}^{n}\rightarrow\mathbb{R}^{n}$ is defined by 
\begin{equation}
	\label{p}
   P(y,\xi)=\xi+\nabla\upsilon_{\xi}(y), 
\end{equation}
where $\upsilon_{\xi}$ is the solution to the cell problem:
\begin{equation}
	\label{cell}
	\begin{cases}
		\displaystyle
		\int_{Y}\left(A(y,\xi+\nabla\upsilon_{\xi}),\nabla w\right)dy=0 \text{, for every $w\in W_{per}^{1,p_{1}}(Y)$},\\
		\upsilon_{\xi}\in W_{per}^{1,p_{1}}(Y)	
	\end{cases}
\end{equation}	
\end{theorem}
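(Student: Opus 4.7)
The plan is to follow the classical Tartar method of oscillating test functions, adapted to the two-phase power-law setting with exponents $p_1 \leq p_2$ (and corresponding duals). Since this is essentially the result from Chapter 15 of \cite{Zhikov1994}, the approach breaks into three standard stages: a priori estimates plus weak compactness, construction of admissible oscillating test functions from the cell problem, and identification of the nonlinear weak limit via monotonicity.

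First I would extract uniform estimates. Testing \eqref{Dirichlet} against $u_\epsilon$ itself and using the monotonicity condition \eqref{MonA} with $\xi_2 = 0$ together with $A(y,0)=0$ yields a coercivity bound of the form $\int_\Omega (|\nabla u_\epsilon|^{p_1} \chi_1^\epsilon + |\nabla u_\epsilon|^{p_2} \chi_2^\epsilon )\, dx \leq C\|f\|_{W^{-1,q_2}}\|u_\epsilon\|_{W_0^{1,p_1}}$, from which one deduces $\|u_\epsilon\|_{W_0^{1,p_1}(\Omega)} \leq C$ by Young's and Poincar\'e's inequalities (the $p_2$-part only helps). The continuity bound \eqref{ConA} with $\xi_2 = 0$ then gives $\|A_\epsilon(\cdot,\nabla u_\epsilon)\|_{L^{q_2}(\Omega)} \leq C$. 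Passing to a subsequence, $u_\epsilon \rightharpoonup u$ in $W_0^{1,p_1}(\Omega)$ and $A_\epsilon(\cdot,\nabla u_\epsilon) \rightharpoonup \xi_0$ in $L^{q_2}(\Omega;\mathbb{R}^n)$. Passing to the limit in the distributional identity $-\mathrm{div}(A_\epsilon(\cdot,\nabla u_\epsilon)) = f$ immediately yields $-\mathrm{div}\,\xi_0 = f$ in $\Omega$, so everything reduces to identifying $\xi_0 = b(\nabla u)$.

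Next I would construct oscillating test fields. For each fixed $\xi \in \mathbb{R}^n$, let $\upsilon_\xi \in W^{1,p_1}_{per}(Y)$ solve the cell problem \eqref{cell}, set $P(y,\xi) = \xi + \nabla\upsilon_\xi(y)$, and define the ``corrector'' $w_\epsilon^\xi(x) = \xi \cdot x + \epsilon\,\upsilon_\xi(x/\epsilon)$. Then $\nabla w_\epsilon^\xi(x) = P(x/\epsilon, \xi)$, so by periodic weak convergence $\nabla w_\epsilon^\xi \rightharpoonup \xi$ in $L^{p_1}(\Omega;\mathbb{R}^n)$ and $A_\epsilon(\cdot,\nabla w_\epsilon^\xi) \rightharpoonup \int_Y A(y,P(y,\xi))\, dy = b(\xi)$ in $L^{q_2}(\Omega;\mathbb{R}^n)$. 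Moreover, the cell equation \eqref{cell} shows that $\mathrm{div}(A_\epsilon(\cdot,\nabla w_\epsilon^\xi)) \to 0$ in $W^{-1,q_2}(\Omega)$ (tested against any $W_0^{1,p_1}$ function).

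Finally I would invoke Minty's monotonicity trick. For $\varphi \in C_c^\infty(\Omega)$ with $\varphi \geq 0$, set $I_\epsilon(\xi) := \int_\Omega \bigl( A_\epsilon(x,\nabla u_\epsilon) - A_\epsilon(x,\nabla w_\epsilon^\xi),\, \nabla u_\epsilon - \nabla w_\epsilon^\xi \bigr)\varphi\, dx \geq 0$, which holds by \eqref{MonA}. Expanding and using the two equations satisfied by $u_\epsilon$ and by $w_\epsilon^\xi$ to eliminate the ``div'' terms, the cross-terms involving products of a weakly convergent sequence with a divergence-convergent one can be handled by the compensated compactness argument of Tartar, and one passes to the limit to obtain
\begin{equation*}
\int_\Omega \bigl(\xi_0 - b(\xi),\, \nabla u - \xi\bigr)\,\varphi\, dx \;\geq\; 0 \qquad \text{for every } \xi \in \mathbb{R}^n,\ \varphi \geq 0.
\end{equation*}
Choosing $\xi = \nabla u(x_0) \pm t\,\eta$ at a Lebesgue point and letting $t \to 0^+$ yields, by strict monotonicity of $b$ (inherited from \eqref{MonA}), the pointwise identification $\xi_0 = b(\nabla u)$, which closes the argument. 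The main obstacle is the passage to the limit in the cross terms of $I_\epsilon(\xi)$: the mismatched integrability $p_1 \leq p_2$ across phases, and the fact that $\nabla u_\epsilon$ is only uniformly $L^{p_1}$ on $\Omega$ while $A_\epsilon(\cdot,\nabla u_\epsilon)$ is only uniformly $L^{q_2}$, forces one to exploit the div-curl structure rather than raw strong convergence, which is precisely why the cell-problem fields $w_\epsilon^\xi$ with controlled divergence are indispensable.
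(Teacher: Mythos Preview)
The paper does not actually prove Theorem~\ref{homogenization}: it is stated as a quotation from Chapter~15 of \cite{Zhikov1994}, and the only piece the paper establishes independently is the a priori bound \eqref{aprioribound} (Lemma~\ref{proofaprioribound}), which coincides with the first stage of your argument. Your outline---uniform bounds and weak compactness, oscillating test functions $w_\epsilon^\xi(x)=\xi\cdot x+\epsilon\upsilon_\xi(x/\epsilon)$ built from the cell problem, and identification of the flux limit via Minty's monotonicity trick combined with the Tartar div--curl structure---is exactly the classical scheme carried out in \cite{Zhikov1994}, so there is nothing to compare against within the paper itself.

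One small imprecision worth tightening: in the final step you write ``choosing $\xi=\nabla u(x_0)\pm t\eta$ at a Lebesgue point,'' but $\xi$ in your inequality is a \emph{constant} vector, not a function. The clean way is to first deduce from the localized inequality (valid for all constant $\xi$ and all $0\leq\varphi\in C_c^\infty(\Omega)$) that $(\xi_0(x)-b(\xi),\nabla u(x)-\xi)\geq 0$ for a.e.\ $x$ and every $\xi\in\mathbb{R}^n$, and only then apply the hemicontinuity of $b$ (Lemma~\ref{monconb}) with $\xi=\nabla u(x)\pm t\eta$ and $t\to 0^+$; strict monotonicity of $b$ is not needed for this identification, only monotonicity plus continuity.
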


\begin{remark}
The following a priori bound is satisfied 
\begin{equation}
	\label{aprioribound}
	\displaystyle
\sup_{\epsilon>0}\left(\int_{\Omega}\chi_{1}^{\epsilon}(x)\left|\nabla u_{\epsilon}(x)\right|^{p_{1}}dx+\int_{\Omega}\chi_{2}^{\epsilon}(x)\left|\nabla u_{\epsilon}(x)\right|^{p_{2}}dx\right)\leq C<\infty,
\end{equation}
where $C$ does not depend on $\epsilon$.  The proof of this bound is given in Lemma~\ref{proofaprioribound}.
\end{remark}

\begin{remark}
The function $b$, defined in (\ref{b}), is continuous and monotone (see Lemma~\ref{monconb}).
\end{remark}

\begin{remark}	
Since the solution $\upsilon_{\xi}$ of (\ref{cell}) can be extended by periodicity to a function of 
$W_{loc}^{1,p_{1}}(\mathbb{R}^{n})$, then (\ref{cell}) is equivalent to $-\mbox{div}(A(y,\xi+\nabla \upsilon_{\xi}(y)))=0$ 
over $\textit{D}^{'}(\mathbb{R}^{n})$, i.e., 
\begin{equation}
	\label{div with p}
	-\mbox{div}\left(A(y,P(y,\xi))\right)=0 \text{ in $\textsl{D}^{'}(\mathbb{R}^{n})$ for every $\xi\in\mathbb{R}^{n}$}.
\end{equation}

Moreover, by (\ref{cell}), we have
\begin{equation}
	\label{inner product a with p}
	\displaystyle
	\int_{Y}\left(A(y,P(y,\xi)),P(y,\xi)\right)dy=\int_{Y}\left(A(y,P(y,\xi)),\xi\right)dy=\left(b(\xi),\xi\right).
\end{equation}
\end{remark}

For $\epsilon>0$, define $P_{\epsilon}:\mathbb{R}^{n}\times\mathbb{R}^{n}\rightarrow\mathbb{R}^{n}$ by
\begin{equation}
	\label{p epsilon}
	\displaystyle P_{\epsilon}(x,\xi)=P\left(\frac{x}{\epsilon},\xi\right)=\xi+\nabla\upsilon_{\xi}\left(\frac{x}{\epsilon}\right),
\end{equation}
where $\upsilon_{\xi}$ is the unique solution of (\ref{cell}).  The functions $P$ and $P_{\epsilon}$ are easily 
seen to have the following properties
\begin{equation}
	\label{p1}
	\text{$P(\cdot,\xi)$ is $Y$-periodic and $P_{\epsilon}(x,\xi)$ is $\epsilon$-periodic in $x$.}
\end{equation}
\begin{equation}
	\label{p2}
	\displaystyle \int_{Y}P(y,\xi)dy=\xi.
\end{equation}
\begin{equation}
	\label{p3}
	P_{\epsilon}(\cdot,\xi)\rightharpoonup\xi \text{ in $L^{p_{1}}(\Omega;\mathbb{R}^{n})$ as $\epsilon\rightarrow0$.}
\end{equation}
\begin{equation}
	\label{p4}
	P(y,0)=0 \text{ for almost every $y$.}
\end{equation}
\begin{equation}
	\label{p5}
	A\left(\frac{\cdot}{\epsilon},P_{\epsilon}(\cdot,\xi)\right)\rightharpoonup b(\xi) \text{ in $L^{q_{2}}(\Omega;\mathbb{R}^{n})$, as $\epsilon\rightarrow 0$}.
\end{equation}

We now state the higher order integrability properties of the homogenized solution for periodic dispersions of inclusions 
and layered microgeometries.
\begin{theorem}
\label{regularity}
Given a periodic dispersion of inclusions or a layered material then the solution $u$ of (\ref{homogenized}) belongs to 
$W_{0}^{1,p_{2}}(\Omega)$.
\end{theorem}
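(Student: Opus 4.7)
The plan is to exploit the key geometric feature that, in both configurations, the higher-exponent phase (material~$2$) carries the a~priori $L^{p_2}$ control of the gradient provided by (\ref{aprioribound}), and to transfer this control to the homogenized limit via a uniformly bounded extension operator. Setting $E_2^\epsilon:=\{x\in\Omega:\chi_2^\epsilon(x)=1\}$, the estimate (\ref{aprioribound}) gives $\|\chi_2^\epsilon\nabla u_\epsilon\|_{L^{p_2}(\Omega)}\le C$, so the restrictions $u_\epsilon|_{E_2^\epsilon}$ are uniformly bounded in $W^{1,p_2}(E_2^\epsilon)$.

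The heart of the argument is the construction of a linear extension operator $T_\epsilon:W^{1,p_2}(E_2^\epsilon)\to W^{1,p_2}(\Omega)$ whose norm is bounded independently of $\epsilon$. For a periodic dispersion of inclusions, the compact containment $\overline F\subset Y$ puts us in the setting of the classical perforated-domain extension theorem (Acerbi--Chiad\`o Piat--Dal Maso--Percivale), which supplies $T_\epsilon$ directly. For the layered geometry, however, $E_2^\epsilon$ is a disjoint union of parallel slabs separated by slabs of material~$1$, so the perforated-domain result does not apply as a black box; I would build $T_\epsilon$ slab by slab by reflecting each trace across the interface $\Gamma$ (or by affine interpolation in the transverse variable $y_n$ across each slab of material~$1$) and verify uniform boundedness via a one-dimensional computation.

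With $T_\epsilon$ in hand, set $\tilde u_\epsilon:=T_\epsilon(u_\epsilon|_{E_2^\epsilon})$. This sequence is uniformly bounded in $W^{1,p_2}(\Omega)$, so along a subsequence it converges weakly to some $\tilde u$ in $W^{1,p_2}(\Omega)$ and strongly in $L^{p_2}(\Omega)$ by Rellich--Kondrachov. Because $T_\epsilon$ is an extension, $\chi_2^\epsilon\tilde u_\epsilon=\chi_2^\epsilon u_\epsilon$ pointwise on $\Omega$. Passing to the limit using the weak-$*$ convergence $\chi_2^\epsilon\rightharpoonup\theta_2$ in $L^\infty(\Omega)$, the strong $L^{p_2}$-convergence of $\tilde u_\epsilon$, and the strong $L^{p_1}$-convergence $u_\epsilon\to u$ (Rellich applied to the weak $W^{1,p_1}$-convergence from Theorem~\ref{homogenization}), both sides converge weakly in $L^{p_1}(\Omega)$ to $\theta_2\tilde u$ and $\theta_2 u$, respectively, forcing $\tilde u=u$ a.e.\ in $\Omega$. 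Combining $u=\tilde u\in W^{1,p_2}(\Omega)$ with $u\in W_0^{1,p_1}(\Omega)$ from Theorem~\ref{homogenization}, the trace of $u$ on $\partial\Omega$ vanishes, so the extra gradient integrability upgrades $u$ to $W_0^{1,p_2}(\Omega)$.

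The main obstacle is the extension-operator construction in the layered case: since material~$2$ is disconnected in that geometry, one cannot invoke the classical perforated-domain theorem off the shelf, and the uniformly bounded $T_\epsilon$ has to be constructed by hand exploiting the explicit one-dimensional transverse structure of the layers. The dispersed case, by contrast, is essentially a direct invocation of that classical result.
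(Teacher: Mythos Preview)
Your approach is entirely different from the paper's, which does not pass through the sequence $u_\epsilon$ at all: following Remark~\ref{remarkhigher}, the paper argues variationally, showing that for these two microgeometries the homogenized Lagrangian associated with $b$ in (\ref{b}) is regular (no Lavrentiev phenomenon) and has $p_2$-growth from below, so the minimizer of the homogenized functional---which is the solution $u$ of (\ref{homogenized})---automatically lies in $W_0^{1,p_2}(\Omega)$. For the dispersed case this is taken from Chapter~14 of \cite{Zhikov1994}; for layers it is done in \cite{Jimenez2010}.

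For the \emph{dispersed} geometry your extension argument is essentially sound: $E_2^\epsilon$ is the connected host matrix with compactly contained holes, the Acerbi--Chiad\`o Piat--Dal Maso--Percivale operator gives a uniform $W^{1,p_2}$ extension, and your identification $\tilde u=u$ via $\chi_2^\epsilon\rightharpoonup\theta_2$ is correct.

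For the \emph{layered} geometry, however, there is a genuine gap: no extension operator $T_\epsilon:W^{1,p_2}(E_2^\epsilon)\to W^{1,p_2}(\Omega)$ with $\epsilon$-independent norm can exist, and the ``one-dimensional computation'' you allude to will not close. The set $E_2^\epsilon$ is a disjoint union of parallel slabs; taking $v$ to be a different constant on each slab already shows that an estimate $\|\nabla T_\epsilon v\|_{L^{p_2}(\Omega)}\le C\|\nabla v\|_{L^{p_2}(E_2^\epsilon)}$ is impossible (the right side vanishes while any $W^{1,p_2}$ extension must have nonzero gradient). Your concrete proposals---reflection or affine interpolation across each material-$1$ slab of width $\sim\epsilon\theta_1$---both require control of the jump $u_\epsilon(\cdot,b)-u_\epsilon(\cdot,a)$ across that slab. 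But the only route from one interface to the other passes through material~$1$, where (\ref{aprioribound}) gives merely $\|\nabla u_\epsilon\|_{L^{p_1}}$ control; writing $u_\epsilon(\cdot,b)-u_\epsilon(\cdot,a)=\int_a^b\partial_n u_\epsilon\,ds$ yields at best an $L^{p_1}$ bound on the jump, and the transverse derivative of the affine interpolant then contributes $(\epsilon\theta_1)^{1-p_2}\|u_\epsilon(\cdot,b)-u_\epsilon(\cdot,a)\|_{L^{p_2}}^{p_2}$ to $\|\nabla\tilde u_\epsilon\|_{L^{p_2}}^{p_2}$, which cannot be bounded uniformly from $L^{p_1}$ information when $p_2>p_1$. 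The obstruction is structural: bridging disconnected components of $E_2^\epsilon$ in $W^{1,p_2}$ demands $L^{p_2}$ information you do not have, and this is precisely why the paper treats the layered case through the homogenized Lagrangian rather than through extensions.
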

\begin{remark}
\label{remarkhigher}
The proof of Theorem~\ref{regularity} \cite{Jimenez2010} uses a variational approach and considers the homogenized Lagrangian associated with $b(\xi)$ defined in (\ref{b}).  The integrability of the homogenized solution $u$ of (\ref{homogenized}) is determined by the growth of the homogenized Lagrangian with respect to its argument, which follows from the regularity of the Lagrangian.  For periodic dispersed and layered microstructures no Lavrentiev phenomenon occurs.  The proof of the regularity of the homogenized Lagrangian for periodic dispersed microstructure can be found in Chapter~14 of \cite{Zhikov1994} and for layered microstructure in \cite{Jimenez2010}.  Both proofs are valid for $p_{1},p_{2}\geq1$
\end{remark}

\subsubsection{Statement of the Corrector Theorem}
\label{CorrectorSection}
We now describe the family of correctors that provide a strong approximation of the sequence $\left\{\chi_{i}^{\epsilon}\nabla u_{\epsilon}\right\}_{\epsilon>0}$  in the $L^{p_{i}}(\Omega,\mathbb{R}^{n})$ norm, for $i=1,2$.  We denote the rescaled period cell 
with side length $\epsilon>0$  by $Y_{\epsilon}$ and write $Y_{\epsilon}^{k}=\epsilon k+Y_{\epsilon}$, where $k\in\mathbb{Z}^{n}$.  
In what follows it is convenient to define the index set $I_{\epsilon}=\left\{k\in\mathbb{Z}^{n}:Y_{\epsilon}^{k}\subset \Omega \right\}$.
For $\varphi\in L^{p_{2}}(\Omega;\mathbb{R}^{n})$, we define the local average operator $M_{\epsilon}$ associated with the 
partition $Y^k_\epsilon$, ${k\in I_{\epsilon}}$ by
\begin{equation}
	\label{approximation}
	\displaystyle M_{\epsilon}(\varphi)(x) = 
\sum_{k\in\hspace{1mm}I_{\epsilon}}{\chi_{Y_{\epsilon}^{k}}(x)\frac{1}{\left|Y_{\epsilon}^{k}\right|}\int_{Y_{\epsilon}^{k}}\varphi(y)dy};  \text{ if $\displaystyle x\in\bigcup_{k\in I_{\epsilon}}Y_{\epsilon}^{k}$,}
\end{equation}

The family of approximations of the identity map $M_{\epsilon}$ has the following properties (for a proof, see, for example \cite{Zaanen1958})
\begin{enumerate}
	\item For $i=1,2$, $\left\|M_{\epsilon}(\varphi)-\varphi\right\|_{L^{p_{i}}(\Omega;\mathbb{R}^{n})}\rightarrow0$ 
	as $\epsilon\rightarrow0$.
	\item $M_{\epsilon}(\varphi)\rightarrow\varphi$ a.e. on $\Omega$.
	\item From Jensen's inequality we have   $\left\|M_{\epsilon}(\varphi)\right\|_{L^{p_{i}}(\Omega;\mathbb{R}^{n})}\leq\left\|\varphi\right\|_{L^{p_{i}}(\Omega;\mathbb{R}^{n})}$, 
	for every $\varphi\in L^{p_{2}}(\Omega;\mathbb{R}^{n})$ and $i=1,2$.
\end{enumerate}

The strong approximation to the sequence $\left\{\chi_{i}^{\epsilon}\nabla u_{\epsilon}\right\}_{\epsilon>0}$ 
is given by the following corrector theorem. 
\begin{theorem}[Corrector Theorem]
\label{corrector}
Let $f\in W^{-1,q_{2}}(\Omega)$, let $u_{\epsilon}$ be the solutions to the problem (\ref{Dirichlet}), 
and let $u$ be the solution to problem (\ref{homogenized}).  Then, up to a subsequence, for periodic dispersions of inclusions  and 
for layered materials, we have 
\begin{equation}
 	\label{strongcvcorrector}	
 	\displaystyle
\int_{\Omega}\left|\chi_{i}^{\epsilon}(x)P_{\epsilon}\left(x,M_{\epsilon}(\nabla u)(x)\right)-\chi_{i}^{\epsilon}(x)\nabla u_{\epsilon}(x)\right|^{p_{i}}dx\rightarrow0, 
\end{equation}
as $\epsilon\rightarrow0$, for $i=1,2$.
\end{theorem}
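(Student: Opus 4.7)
The approach is to deduce the corrector estimate from the monotonicity condition (\ref{MonA}) by reducing it to the vanishing of a single ``energy gap.'' Set
\begin{equation*}
E_{\epsilon}:=\int_{\Omega}\bigl(A_{\epsilon}(x,\nabla u_{\epsilon})-A_{\epsilon}(x,P_{\epsilon}(x,M_{\epsilon}(\nabla u))),\,\nabla u_{\epsilon}-P_{\epsilon}(x,M_{\epsilon}(\nabla u))\bigr)\,dx.
\end{equation*}
In the sublinear regime $p_{1},p_{2}\in(1,2)$, (\ref{MonA}) holds with $\beta_{i}=2$, giving only a weighted $L^{2}$ control of the gradient error. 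Interpolating via H\"older's inequality with conjugate exponents $2/p_{i}$ and $2/(2-p_{i})$ yields, for $i=1,2$,
\begin{equation*}
\int_{\Omega}\chi_{i}^{\epsilon}\bigl|\nabla u_{\epsilon}-P_{\epsilon}(\cdot,M_{\epsilon}(\nabla u))\bigr|^{p_{i}}\,dx\le C\,E_{\epsilon}^{p_{i}/2}\,\mathcal{B}_{\epsilon}^{(2-p_{i})/2},
\end{equation*}
where $\mathcal{B}_{\epsilon}:=\int_{\Omega}\bigl(1+|\nabla u_{\epsilon}|+|P_{\epsilon}(\cdot,M_{\epsilon}(\nabla u))|\bigr)^{p_{i}}\,dx$. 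The factor $\mathcal{B}_{\epsilon}$ is uniformly bounded: the $\nabla u_{\epsilon}$ contribution by the a priori bound (\ref{aprioribound}), and the $P_{\epsilon}$ contribution by higher integrability of the cell correctors $\upsilon_{\xi}$ together with $\nabla u\in L^{p_{2}}(\Omega;\mathbb{R}^{n})$ from Theorem~\ref{regularity}. It therefore suffices to prove $E_{\epsilon}\to 0$.

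To establish $E_{\epsilon}\to 0$, I expand it bilinearly as $E_{\epsilon}=T_{1}-T_{2}-T_{3}+T_{4}$ and show each of the four terms converges to $\int_{\Omega}(b(\nabla u),\nabla u)\,dx$. The diagonal term $T_{1}=\int A_{\epsilon}(\nabla u_{\epsilon})\cdot\nabla u_{\epsilon}\,dx=\langle f,u_{\epsilon}\rangle$ converges because $u_{\epsilon}\rightharpoonup u$ in $W_{0}^{1,p_{1}}(\Omega)$ by Theorem~\ref{homogenization}. For $T_{4}$, on each cell $Y_{\epsilon}^{k}$ the averaged field $M_{\epsilon}(\nabla u)$ is a constant $\xi_{k}^{\epsilon}$, so a change of variables together with (\ref{inner product a with p}) rewrites $T_{4}$ as $\int_{\bigcup Y_{\epsilon}^{k}}(b(M_{\epsilon}(\nabla u)),M_{\epsilon}(\nabla u))\,dx$, which passes to the limit through continuity and polynomial growth of $b$ and the a.e.\ convergence of $M_{\epsilon}(\nabla u)$ to $\nabla u$ (extracted along a subsequence; this is the source of the ``up to a subsequence'' clause). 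For the cross terms $T_{2}$ and $T_{3}$, I split
\begin{equation*}
P_{\epsilon}(\cdot,M_{\epsilon}(\nabla u))=M_{\epsilon}(\nabla u)+\bigl[P_{\epsilon}(\cdot,M_{\epsilon}(\nabla u))-M_{\epsilon}(\nabla u)\bigr];
\end{equation*}
the $M_{\epsilon}(\nabla u)$ piece produces the correct limit by weak-strong duality, pairing $A_{\epsilon}(\nabla u_{\epsilon})\rightharpoonup b(\nabla u)$ in $L^{q_{2}}$ (for $T_{2}$) and the cell-wise extension of (\ref{p5}) (for $T_{3}$) against $M_{\epsilon}(\nabla u)\to\nabla u$ in $L^{p_{2}}$. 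The oscillatory remainder, on each $Y_{\epsilon}^{k}$, is the gradient of the scaled corrector $\epsilon\,\upsilon_{\xi_{k}^{\epsilon}}(x/\epsilon)$; integration by parts cell by cell, using $-\mbox{div}\,A_{\epsilon}(\nabla u_{\epsilon})=f$ for $T_{2}$ and the divergence-free property (\ref{div with p}) for $T_{3}$, reduces this part to contributions of order $\epsilon$ plus boundary-layer terms that are $o(1)$.

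The principal obstacle is precisely the sublinear regime $p_{i}\in(1,2)$: in contrast to the super-quadratic setting of \cite{Jimenez2010}, monotonicity alone delivers only a weighted $L^{2}$ bound, and the H\"older closure above forces one to estimate $P_{\epsilon}(\cdot,M_{\epsilon}(\nabla u))$ in $L^{p_{2}}$ on the phase of higher exponent. This in turn requires higher integrability of the cell corrector $\nabla\upsilon_{\xi}$ on that phase, which rests on the Lagrangian regularity highlighted in Remark~\ref{remarkhigher} and is restricted precisely to the dispersed-inclusion and layered geometries for which that regularity is available. A secondary difficulty lies in the cross terms: since $M_{\epsilon}(\nabla u)$ is only piecewise-constant and $\xi\mapsto P(\cdot,\xi)$ is nonlinear, the weak limit (\ref{p5}) cannot be invoked globally, and the cell-by-cell integration-by-parts just described, with careful management of the mismatch on cell interfaces, is the device that bridges this gap. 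I expect the technical lemmas of Section~\ref{technical} to encapsulate both ingredients.
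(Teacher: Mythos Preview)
Your global framework matches the paper exactly: reduce to $E_{\epsilon}\to 0$ via (\ref{MonA}) and H\"older, expand into four terms, and show each tends to $\int_{\Omega}(b(\nabla u),\nabla u)\,dx$. Your treatment of the diagonal terms $T_{1}$ and $T_{4}$ is essentially the paper's Steps~4 and~1 (minor point: the paper passes to the limit in $T_{4}$ via the quantitative continuity estimate (\ref{Conb}) for $b$ and strong $L^{p_{2}}$ convergence of $M_{\epsilon}(\nabla u)$, so no subsequence is extracted there; the ``up to a subsequence'' actually comes from Dunford--Pettis in the cross terms, Lemma~\ref{dunfordpettis}).

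The genuine gap is in your handling of the cross terms. For $T_{2}=\int_{\Omega}(A_{\epsilon}(\nabla u_{\epsilon}),P_{\epsilon}(\cdot,M_{\epsilon}\nabla u))\,dx$, your split $P_{\epsilon}=M_{\epsilon}(\nabla u)+\nabla_{x}\bigl(\epsilon\,\upsilon_{\xi_{k}^{\epsilon}}(x/\epsilon)\bigr)$ is fine on each $Y_{\epsilon}^{k}$, but the piecewise function $w_{\epsilon}(x)=\epsilon\,\upsilon_{\xi_{k}^{\epsilon}}(x/\epsilon)$ on $Y_{\epsilon}^{k}$ is \emph{not} in $W^{1,p_{1}}(\Omega)$: since $\xi_{k}^{\epsilon}\neq\xi_{k'}^{\epsilon}$ on adjacent cells, $w_{\epsilon}$ jumps across every interior face. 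Your cell-by-cell integration by parts therefore leaves interface integrals of the flux $A_{\epsilon}(\nabla u_{\epsilon})$ against these jumps; there are $O(\epsilon^{-n})$ faces of $(n-1)$-measure $O(\epsilon^{n-1})$ and jumps of size $O(\epsilon)$, so the naive count is $O(1)$, not $o(1)$, and no trace control on $A_{\epsilon}(\nabla u_{\epsilon})$ is available. For $T_{3}=\int_{\Omega}(A_{\epsilon}(P_{\epsilon}(\cdot,M_{\epsilon}\nabla u)),\nabla u_{\epsilon})\,dx$ the situation is worse: $P_{\epsilon}$ sits inside the nonlinear map $A_{\epsilon}$, so your additive split does not apply at all, and the ``cell-wise extension of (\ref{p5})'' cannot be paired against $\nabla u_{\epsilon}$ because the frozen vector $\xi_{k}^{\epsilon}$ moves with $\epsilon$.

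The paper bypasses both obstructions by \emph{freezing at the macroscopic scale} rather than the $\epsilon$-scale: approximate $\nabla u$ in $L^{p_{2}}$ by a simple function $\Psi=\sum_{j}\eta_{j}\chi_{\Omega_{j}}$ with $\Omega_{j}$ fixed (independent of $\epsilon$). On each $\Omega_{j}$ the flux $A_{\epsilon}(\cdot,P_{\epsilon}(\cdot,\eta_{j}))$ is globally divergence-free by (\ref{div with p}), and $A_{\epsilon}(\cdot,\nabla u_{\epsilon})$ has divergence $-f$, so a genuine div--curl/compensated-compactness argument (made rigorous through equi-integrability and Dunford--Pettis, Lemmas~\ref{unifboundDP}--\ref{dunfordpettis}) identifies the limit of the products. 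The residual error from replacing $M_{\epsilon}(\nabla u)$ by $\Psi$ is controlled by Lemma~\ref{lemma3}. This two-scale freezing---macroscopic simple-function approximation rather than your microscopic cell-by-cell scheme---is the missing idea.
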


The proof of Theorem~\ref{corrector} is given in Section~\ref{proof corrector}. 

\subsubsection{Lower Bounds on the Local Amplification of the Macroscopic Field}
\label{SectionFluctuations}
We show lower bounds on the $L^q$ norm of the gradient fields inside each material that are given in terms 
of the correctors presented in Theorem \ref{corrector}.  We begin by presenting a general lower bound that holds 
for the composition of the sequence $\{\chi_i^\epsilon\nabla u_\epsilon\}_{\epsilon>0}$ with any non-negative 
Carath\'{e}odory function. Recall that $\psi:\Omega\times\mathbb{R}^{n}\rightarrow\mathbb{R}$ is a Carath\'{e}odory 
function if $\psi(x,\cdot)$ is continuous for almost every $x\in\Omega$ and if $\psi(\cdot,\lambda)$ is measurable 
in $x$ for every $\lambda\in\mathbb{R}^n$.  The lower bound on the sequence obtained by the composition of 
$\psi(x,\cdot)$ with $\chi_i^\epsilon(x)\nabla u_\epsilon(x)$ is given by
\begin{theorem}
\label{fluctuations}
For all Carath\'{e}odory functions $\psi\geq0$ and measurable sets $D\subset\Omega$ we have 
$$\int_{D}\int_{Y}\psi\left(x,\chi_{i}(y)P\left(y,\nabla u(x)\right)\right)dydx\leq\liminf_{\epsilon\rightarrow0}\int_{D}\psi\left(x,\chi_{i}^{\epsilon}(x)\nabla u_{\epsilon}(x)\right)dx.$$
	
If the sequence $\left\{\psi\left(x,\chi_{i}^{\epsilon}(x)\nabla u_{\epsilon}(x)\right)\right\}_{\epsilon>0}$ 
is weakly convergent in $L^{1}(\Omega)$, then the inequality becomes an equality.
	
In particular, for $\psi(x,\lambda)=\left|\lambda\right|^{q}$ with $q>1$, we have
\begin{eqnarray} \int_{D}\int_{Y}\chi_{i}(y)\left|P\left(y,\nabla u(x)\right)\right|^{q}dydx\leq\liminf_{\epsilon\rightarrow0}\int_{D}\chi_{i}^{\epsilon}(x)\left|\nabla u_{\epsilon}(x)\right|^{q}dx.
\label{lq}
\end{eqnarray}
\end{theorem}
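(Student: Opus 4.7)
The plan is to combine the Corrector Theorem (Theorem \ref{corrector}) with the standard lower semicontinuity of integrals generated by Young measures. Start by picking a subsequence (still denoted $\epsilon$) realizing $\liminf_{\epsilon\to 0}\int_D\psi(x,\chi_i^\epsilon\nabla u_\epsilon)\,dx$. By Theorem \ref{corrector}, a further subsequence satisfies
$$\chi_i^\epsilon(x)\nabla u_\epsilon(x)-\chi_i^\epsilon(x)P_\epsilon(x,M_\epsilon(\nabla u)(x))\longrightarrow 0 \quad\text{in } L^{p_i}(\Omega;\mathbb{R}^n),$$
hence in measure. Using the a priori bound (\ref{aprioribound}), extract a final subsequence so that $\{\chi_i^\epsilon\nabla u_\epsilon\}$ generates a Young measure $\{\nu_x\}_{x\in\Omega}$; by the standard stability of Young measures under perturbations going to zero in measure, the corrector sequence $\chi_i^\epsilon(\cdot)P_\epsilon(\cdot,M_\epsilon(\nabla u))$ generates the same family $\{\nu_x\}$.

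The second step is to identify $\nu_x$ explicitly. The corrector sequence has a clean two-scale structure: on each cell $Y_\epsilon^k$ the slow variable $M_\epsilon(\nabla u)$ is a constant $\xi_k^\epsilon$, and the map $(y,\xi)\mapsto\chi_i(y)P(y,\xi)$ is $Y$-periodic in $y$ and continuous in $\xi$ (continuity of $P(y,\cdot)$ is inherited from (\ref{ConA}) via the cell problem (\ref{cell})). Since $M_\epsilon(\nabla u)\to\nabla u$ strongly in $L^{p_2}(\Omega;\mathbb{R}^n)$ (which is meaningful thanks to Theorem \ref{regularity} and the listed approximation properties of $M_\epsilon$), a two-scale / periodic-unfolding argument — verified first for simple $\nabla u$ and then extended by density — identifies the Young measure as
$$\nu_x \;=\; T_{x,\#}\bigl(dy|_Y\bigr),\qquad T_x(y):=\chi_i(y)P(y,\nabla u(x)),$$
so that for any non-negative Carath\'eodory $\psi$,
$$\int \psi(x,\lambda)\,d\nu_x(\lambda)\;=\;\int_Y \psi\bigl(x,\chi_i(y)P(y,\nabla u(x))\bigr)\,dy.$$

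Finally, the fundamental lower semicontinuity inequality for Young-measure-generated integrals with a non-negative Carath\'eodory integrand (Balder/Valadier) gives
$$\liminf_{\epsilon\to 0}\int_D \psi(x,\chi_i^\epsilon\nabla u_\epsilon)\,dx \;\geq\; \int_D\!\int \psi(x,\lambda)\,d\nu_x(\lambda)\,dx \;=\; \int_D\!\int_Y \psi\bigl(x,\chi_i(y)P(y,\nabla u(x))\bigr)\,dy\,dx,$$
which is the claim. If $\{\psi(\cdot,\chi_i^\epsilon\nabla u_\epsilon)\}$ converges weakly in $L^1(\Omega)$, it is uniformly integrable by Dunford--Pettis, and uniform integrability upgrades the Young-measure inequality to an equality. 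The particular case $\psi(x,\lambda)=|\lambda|^q$ is a direct substitution once $\nu_x$ is known. The chief obstacle is Step~2, the explicit identification of $\nu_x$: because $M_\epsilon(\nabla u)$ varies from cell to cell one cannot invoke the classical weak-convergence lemma for purely $\epsilon$-periodic functions, so one must couple the two-scale oscillation with the strong $L^{p_2}$-convergence of $M_\epsilon(\nabla u)$ via continuity of $P(y,\cdot)$ and a simple-function density argument.
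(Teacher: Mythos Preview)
Your proposal is correct and follows essentially the same route as the paper: use the Corrector Theorem to conclude that $\{\chi_i^\epsilon\nabla u_\epsilon\}$ and $\{\chi_i^\epsilon P_\epsilon(\cdot,M_\epsilon(\nabla u))\}$ share the same Young measure, identify that Young measure as the pushforward of $dy|_Y$ under $y\mapsto\chi_i(y)P(y,\nabla u(x))$, and then invoke the Young-measure lower semicontinuity inequality (the paper cites Pedregal, Theorem~6.11). The only noteworthy difference is in how the identification step is carried out: you sketch it as a density argument through simple approximations of $\nabla u$, whereas the paper (Lemma~\ref{lemmafluctuations}) works directly with general $\nabla u$ by testing against $\phi\in C_0(\mathbb{R}^n)$, $\zeta\in C_0^\infty(\mathbb{R}^n)$, changing variables on each cell $Y_\epsilon^k$, and controlling $|\phi(\chi_iP(y,M_\epsilon\nabla u))-\phi(\chi_iP(y,\nabla u))|$ via the quantitative continuity estimate for $P$ in Lemma~\ref{lemma2} together with $M_\epsilon\nabla u\to\nabla u$ in $L^{p_2}$.
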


Theorem \ref{fluctuations} together with \eqref{lq} provide explicit lower bounds on the gradient field inside
each material.  It relates the local excursions of the gradient inside each phase $\chi_i^\epsilon\nabla u_\epsilon$
to the average gradient $\nabla u$ through the multiscale quantity given by the corrector $P(y,\nabla u(x))$.  It is 
clear from \eqref{lq} that the $L^q(Y\times\Omega;\mathbb{R}^{n})$ integrability of $P(y,\nabla u(x))$ provides a lower bound on the 
$L^q(\Omega;\mathbb{R}^{n})$ integrability of $\nabla u_\epsilon$.

The proof of Theorem~\ref{fluctuations} is given in Section~\ref{proof fluctuations}.
\section{Technical Lemmas}
\label{technical}
In this section we state some technical a priori bounds and convergence properties for the sequences $P_{\epsilon}$ 
defined in (\ref{p epsilon}), $\nabla u_{\epsilon}$, and $A_{\epsilon}(x, P_{\epsilon}(x,\nabla u_{\epsilon}))$ 
that are used in the proof of the main results of this paper.  The proofs of these lemmas can be found in the Appendix.   
\begin{lemma}
\label{lemma1}
For every $\xi\in\mathbb{R}^{n}$ we have
\begin{equation}
\label{Lemma 1}
	\displaystyle 
\int_{Y}\chi_{1}(y)\left|P(y,\xi)\right|^{p_{1}}dy+\int_{Y}\chi_{2}(y)\left|P(y,\xi)\right|^{p_{2}}dy\leq C\left(1+\left|\xi\right|^{p_{1}}\theta_{1}+\left|\xi\right|^{p_{2}}\theta_{2}\right)
\end{equation}
and by a change of variables, we obtain
\begin{equation}
\label{Lemma 1 epsilon}
	\displaystyle 
\int_{Y_{\epsilon}}\chi_{1}^{\epsilon}(x)\left|P_{\epsilon}(x,\xi)\right|^{p_{1}}dx+\int_{Y_{\epsilon}}\chi_{2}^{\epsilon}(x)\left|P_{\epsilon}(x,\xi)\right|^{p_{2}}dx\leq C\left(1+\left|\xi\right|^{p_{1}}\theta_{1}+\left|\xi\right|^{p_{2}}\theta_{2}\right)\left|Y_{\epsilon}\right|
	\end{equation}
\end{lemma}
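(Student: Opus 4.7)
The plan is to derive the bound directly by testing the cell problem (\ref{cell}) against its own solution $\upsilon_{\xi}$ and then splitting the resulting energy identity phase by phase. Since $\upsilon_{\xi}\in W_{per}^{1,p_{1}}(Y)$, it is an admissible test function in (\ref{cell}); combined with the identity $\nabla\upsilon_{\xi}=P(y,\xi)-\xi$, this immediately yields the energy identity \eqref{inner product a with p},
\begin{equation*}
\int_{Y}\bigl(A(y,P(y,\xi)),P(y,\xi)\bigr)\,dy=\int_{Y}\bigl(A(y,P(y,\xi)),\xi\bigr)\,dy.
\end{equation*}

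Next I would insert the explicit constitutive law $A(y,\eta)=\sigma(y)|\eta|^{p(y)-2}\eta$ on both sides. The left-hand side becomes
\begin{equation*}
\sigma_{1}\int_{Y}\chi_{1}(y)|P(y,\xi)|^{p_{1}}\,dy+\sigma_{2}\int_{Y}\chi_{2}(y)|P(y,\xi)|^{p_{2}}\,dy,
\end{equation*}
while the right-hand side is bounded by Cauchy--Schwarz by
\begin{equation*}
\sigma_{1}|\xi|\int_{Y}\chi_{1}(y)|P(y,\xi)|^{p_{1}-1}\,dy+\sigma_{2}|\xi|\int_{Y}\chi_{2}(y)|P(y,\xi)|^{p_{2}-1}\,dy.
\end{equation*}
To each phase-$i$ term I would apply Young's inequality with conjugate exponents $p_{i}/(p_{i}-1)$ and $p_{i}$ in the form $|\xi||P(y,\xi)|^{p_{i}-1}\leq\delta|P(y,\xi)|^{p_{i}}+C_{\delta}|\xi|^{p_{i}}$, then choose $\delta$ small enough so that the $\delta$-terms get absorbed into the left-hand side. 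Using $\int_{Y}\chi_{i}(y)\,dy=\theta_{i}$ on the surviving constant-in-$y$ terms collapses them to $\theta_{i}|\xi|^{p_{i}}$ and produces exactly the bound (\ref{Lemma 1}); the trivial $+1$ on the right is only cosmetic. The rescaled inequality (\ref{Lemma 1 epsilon}) follows immediately from (\ref{Lemma 1}) via the change of variables $y=x/\epsilon$, which rescales each integral by $|Y_{\epsilon}|=\epsilon^{n}$ and sends $P(y,\xi)\mapsto P_{\epsilon}(x,\xi)$, $\chi_{i}(y)\mapsto\chi_{i}^{\epsilon}(x)$.

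I do not anticipate a serious obstacle: the argument is a standard test-with-the-solution-plus-Young's-inequality computation. The only delicate point is ensuring that $\upsilon_{\xi}$ is a legitimate test function in (\ref{cell}) and that the left-hand side $\int_{Y}\chi_{2}(y)|P(y,\xi)|^{p_{2}}\,dy$ is finite a priori, so that the stated inequality has genuine content; both are guaranteed by the well-posedness of the cell problem in its natural energy space for the dispersed and layered microstructures treated in this paper, cf.\ Remark~\ref{remarkhigher}.
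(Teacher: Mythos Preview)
Your argument is correct and essentially the same as the paper's. The paper phrases the lower bound on the left via the abstract monotonicity condition (\ref{MonA}) and the upper bound on the right via (\ref{ConA}), whereas you plug in the explicit law $A(y,\eta)=\sigma(y)|\eta|^{p(y)-2}\eta$ directly, but after that both proofs run the same Young's-inequality-and-absorb computation and arrive at the identical final estimate; as you note, the ``$+1$'' does not actually arise in either derivation and is cosmetic.
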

\begin{lemma}
\label{lemma2}
For every $\xi_{1},\xi_{2}\in\mathbb{R}^{n}$ we have	
\begin{itemize}
	\item For $1<p_{1}\leq p_{2}\leq2$:
\begin{align}
	\label{Lemma 2 epsilon}
	\displaystyle 
&\int_{Y_{\epsilon}}\chi_{1}^{\epsilon}(x)\left|P_{\epsilon}(x,\xi_{1})-P_{\epsilon}(x,\xi_{2})\right|^{p_{1}}dx+\int_{Y_{\epsilon}}\chi_{2}^{\epsilon}(x)\left|P_{\epsilon}(x,\xi_{1})-P_{\epsilon}(x,\xi_{2})\right|^{p_{2}}dx\\
	&\leq C\left[\theta_{1}^{\frac{1}{3-p_{1}}}\left|\xi_{1}-\xi_{2}\right|^{\frac{p_{1}}{3-p_{1}}}\left(1+\left|\xi_{1}\right|^{p_{1}}\theta_{1}+\left|\xi_{2}\right|^{p_{1}}\theta_{1}+\left|\xi_{1}\right|^{p_{2}}\theta_{2}+\left|\xi_{2}\right|^{p_{2}}\theta_{2}\right)^{\frac{2-p_{1}}{3-p_{1}}}\right.\notag\\
	&\quad+ \theta_{2}^{\frac{p_{1}}{2p_{2}-p_{1}p_{2}+p_{1}}}\left|\xi_{1}-\xi_{2}\right|^{\frac{p_{1}p_{2}}{2p_{2}-p_{1}p_{2}+p_{1}}}\notag\\
	&\qquad\times	\left(1+\left|\xi_{1}\right|^{p_{1}}\theta_{1}+\left|\xi_{2}\right|^{p_{1}}\theta_{1}+\left|\xi_{1}\right|^{p_{2}}\theta_{2}+\left|\xi_{2}\right|^{p_{2}}\theta_{2}\right)^{\frac{p_{2}(2-p_{1})}{2p_{2}-p_{1}p_{2}+p_{1}}}\notag\\
	&\quad+ \theta_{1}^{\frac{p_{2}}{2p_{1}-p_{1}p_{2}+p_{2}}}\left|\xi_{1}-\xi_{2}\right|^{\frac{p_{1}p_{2}}{2p_{1}-p_{1}p_{2}+p_{2}}}\notag\\
	&\qquad\times	\left(1+\left|\xi_{1}\right|^{p_{1}}\theta_{1}+\left|\xi_{2}\right|^{p_{1}}\theta_{1}+\left|\xi_{1}\right|^{p_{2}}\theta_{2}+\left|\xi_{2}\right|^{p_{2}}\theta_{2}\right)^{\frac{p_{1}(2-p_{2})}{2p_{1}-p_{1}p_{2}+p_{2}}}\notag\\	
	&\quad\left.+ \theta_{2}^{\frac{1}{3-p_{2}}}\left|\xi_{1}-\xi_{2}\right|^{\frac{p_{2}}{3-p_{2}}}\left(1+\left|\xi_{1}\right|^{p_{1}}\theta_{1}+\left|\xi_{2}\right|^{p_{1}}\theta_{1}+\left|\xi_{1}\right|^{p_{2}}\theta_{2}+\left|\xi_{2}\right|^{p_{2}}\theta_{2}\right)^{\frac{2-p_{2}}{3-p_{2}}}\right]\left|Y_{\epsilon}\right|\notag
\end{align}
	\item For $1<p_{1}\leq 2\leq p_{2}$:
\begin{align}
	\label{Lemma 2b epsilon}
	\displaystyle 
&\int_{Y_{\epsilon}}\chi_{1}^{\epsilon}(x)\left|P_{\epsilon}(x,\xi_{1})-P_{\epsilon}(x,\xi_{2})\right|^{p_{1}}dx+\int_{Y_{\epsilon}}\chi_{2}^{\epsilon}(x)\left|P_{\epsilon}(x,\xi_{1})-P_{\epsilon}(x,\xi_{2})\right|^{p_{2}}dx\\
	&\leq C\left[\theta_{1}^{\frac{1}{3-p_{1}}}\left|\xi_{1}-\xi_{2}\right|^{\frac{p_{1}}{3-p_{1}}}\left(1+\left|\xi_{1}\right|^{p_{1}}\theta_{1}+\left|\xi_{2}\right|^{p_{1}}\theta_{1}+\left|\xi_{1}\right|^{p_{2}}\theta_{2}+\left|\xi_{2}\right|^{p_{2}}\theta_{2}\right)^{\frac{2-p_{1}}{3-p_{1}}}\right.\notag\\
	&\quad+ \theta_{2}^{\frac{p_{1}}{2p_{2}-p_{1}}}\left|\xi_{1}-\xi_{2}\right|^{\frac{p_{1}p_{2}}{2p_{2}-p_{1}}}\left(1+\left|\xi_{1}\right|^{p_{1}}\theta_{1}+\left|\xi_{2}\right|^{p_{1}}\theta_{1}+\left|\xi_{1}\right|^{p_{2}}\theta_{2}+\left|\xi_{2}\right|^{p_{2}}\theta_{2}\right)^{\frac{2(p_{2}-p_{1})}{2p_{2}-p_{1}}}\notag\\
	&\quad+ \theta_{1}\left|\xi_{1}-\xi_{2}\right|^{p_{1}}\notag\\	
	&\quad\left.+ \theta_{2}^{\frac{1}{p_{2}-1}}\left|\xi_{1}-\xi_{2}\right|^{\frac{p_{2}}{p_{2}-1}}\left(1+\left|\xi_{1}\right|^{p_{1}}\theta_{1}+\left|\xi_{2}\right|^{p_{1}}\theta_{1}+\left|\xi_{1}\right|^{p_{2}}\theta_{2}+\left|\xi_{2}\right|^{p_{2}}\theta_{2}\right)^{\frac{p_{2}-2}{p_{2}-1}}\right]\left|Y_{\epsilon}\right|\notag
\end{align}
\end{itemize}
\end{lemma}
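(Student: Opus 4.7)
The plan is to reduce, via the change of variables $x = \epsilon y$, the bounds (\ref{Lemma 2 epsilon}) and (\ref{Lemma 2b epsilon}) to the corresponding inequalities on the unit cell $Y$ and then exploit the cell problem (\ref{cell}). Writing $P_i(y) := P(y,\xi_i)$ and $v_i := \upsilon_{\xi_i}$, I will test (\ref{cell}) for $\xi_1$ and $\xi_2$ against the common periodic test function $w = v_1 - v_2 \in W^{1,p_1}_{per}(Y)$ and subtract. Since $P_1 - P_2 = \nabla v_1 - \nabla v_2 + (\xi_1 - \xi_2)$, this yields the key identity
\[
\int_Y \bigl(A(y,P_1) - A(y,P_2),\, P_1 - P_2\bigr)\, dy \;=\; \int_Y \bigl(A(y,P_1) - A(y,P_2),\, \xi_1 - \xi_2\bigr)\, dy.
\]

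Next, I will bound the left side below using monotonicity (\ref{MonA}) and the right side above using continuity (\ref{ConA}), splitting both integrals by material. Set
\[
N_i := \int_Y \chi_i |P_1 - P_2|^{p_i}\, dy, \qquad T := 1 + (|\xi_1|^{p_1}+|\xi_2|^{p_1})\theta_1 + (|\xi_1|^{p_2}+|\xi_2|^{p_2})\theta_2.
\]
For a material with $p_i < 2$, the monotonicity lower bound carries the weight $(|P_1|+|P_2|)^{p_i-2}$; Hölder's inequality with conjugate exponents $2/p_i$ and $2/(2-p_i)$ then gives $N_i \leq M_i^{p_i/2}(CT)^{(2-p_i)/2}$, where $M_i$ is the weighted monotonicity integral on material $i$ and the weight is controlled via Lemma \ref{lemma1}. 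When $p_i \geq 2$ (material $2$ in the second regime) monotonicity controls $N_i$ directly. On the continuity side, Hölder together with Lemma \ref{lemma1} converts each contribution into a power of $N_i$ multiplied by $\theta_i^{1/p_i}$, together with an extra $T^{(p_i-2)/p_i}$ factor when $p_i \geq 2$, coming from the weight $(1+|P_1|+|P_2|)^{p_i-2}$ estimated via the admissible exponent $(p_2-2)p_2/(p_2-1) \leq p_2$.

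Combining these estimates produces a nonlinear coupled system of the shape
\[
N_1 + N_2 \;\leq\; \sum_{i,j \in \{1,2\}} A_{ij}(\xi_1,\xi_2)\, N_j^{\,r_{ij}},
\]
where in every case the exponents satisfy $r_{ij} < 1$ and the coefficients $A_{ij}$ are explicit products of powers of $|\xi_1-\xi_2|$, $T$, and $\theta_i$. I will decouple this system by applying Young's inequality in the form $A_{ij}\, N_j^{r_{ij}} \leq \tfrac{1}{8}(N_1+N_2) + C\, A_{ij}^{1/(1-r_{ij})}$, absorbing the $\tfrac{1}{2}(N_1+N_2)$ so produced into the left-hand side. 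The four resulting forcing terms $A_{ij}^{1/(1-r_{ij})}$ will match precisely the four expressions on the right of (\ref{Lemma 2 epsilon}) (when $1<p_1\leq p_2\leq 2$) and of (\ref{Lemma 2b epsilon}) (when $1<p_1\leq 2\leq p_2$); the factor $|Y_\epsilon|$ is then restored by the change of variables.

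The main obstacle will be the bookkeeping of exponents: the specific fractions $1/(3-p_i)$, $p_1/(2p_2 - p_1 p_2 + p_1)$, and so on arise only after carefully pairing monotonicity with continuity powers and computing $1/(1-r_{ij})$ for each coupled term. A secondary difficulty is the regime $p_2 \geq 2$, in which the weight $(1+|P_1|+|P_2|)^{p_2-2}$ entering the continuity estimate on material $2$ forces an additional Hölder step; this is what produces the $T^{(p_2-2)/(p_2-1)}$ factor in the fourth term of (\ref{Lemma 2b epsilon}) and, more subtly, the $T$-free term $\theta_1|\xi_1-\xi_2|^{p_1}$, which emerges from the purely off-diagonal coupling between the monotonicity identity on material $2$ and the continuity estimate on material $1$.
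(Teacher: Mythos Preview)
Your proposal is correct and follows essentially the same route as the paper's proof: the cell identity from (\ref{cell}), the H\"older split $N_i \le M_i^{p_i/2}(CT)^{(2-p_i)/2}$ via (\ref{MonA}) and Lemma~\ref{lemma1}, the continuity bound (\ref{ConA}) split by material, and then Young's inequality (the paper carries a parameter $\delta$ rather than the fixed $\tfrac18$) to absorb the subunit powers of $N_1,N_2$. Your reading of the mixed regime is also right: the $T$--free term $\theta_1|\xi_1-\xi_2|^{p_1}$ in (\ref{Lemma 2b epsilon}) arises exactly from the off-diagonal coupling you describe, with $r_{21}=(p_1-1)/p_1$ giving $A_{21}^{1/(1-r_{21})}=\theta_1|\xi_1-\xi_2|^{p_1}$.
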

\begin{remark}
Note the two ``extra" terms in (\ref{Lemma 2 epsilon}) of Lemma~\ref{lemma2} where there is a ``mixing" of the exponents $p_{1}$ and $p_{2}$ which do not appear in the corresponding property of $P_{\epsilon}$ given by Lemma~5.2 in \cite{Jimenez2010}.  Since Lemma~(\ref{lemma2}) is used to prove Lemma~(\ref{lemma3}), these two terms appear again in (\ref{lemma3formula}) and therefore in the proof of Theorem~\ref{corrector} and the proof of Lemma~(\ref{lemmafluctuations}) used to prove Theorem~\ref{fluctuations}.
\end{remark}

\begin{lemma}
\label{lemma3} 
Let $\varphi$ be such that $$\sup_{\epsilon>0}\left\{\int_{\Omega}\chi_{1}^{\epsilon}(x)\left|\varphi(x)\right|^{p_{1}}dx + \int_{\Omega}\chi_{2}^{\epsilon}(x)\left|\varphi(x)\right|^{p_{2}}dx\right\}<\infty, $$
and let $\Psi$ be a simple function of the form 
\begin{equation}
	\label{Psi}
	\Psi(x)=\sum_{j=0}^{m}\eta_{j}\chi_{\Omega_{j}}(x),
\end{equation}	
with $\eta_{j}\in\mathbb{R}^{n}\setminus\left\{0\right\}$, $\Omega_{j}\subset\subset\Omega$, 
$\left|\partial\Omega_{j}\right|=0$, $\Omega_{j}\cap\Omega_{k}=\emptyset$ for $j\neq k$ and $j,k=1,...,m$; 
and set $\eta_{0}=0$ and $\displaystyle \Omega_{0}=\Omega\setminus\bigcup_{j=1}^{m}\Omega_{j}$.  Then
\begin{itemize}
	\item For $1<p_{1}\leq p_{2}\leq2$:
\begin{align}
\label{lemma3formula}
	\displaystyle 	&\limsup_{\epsilon\rightarrow0}\sum_{i=1}^{2}\int_{\Omega}\chi_{i}^{\epsilon}(x)\left|P_{\epsilon}(x,M_{\epsilon}\varphi)-P_{\epsilon}(x,\Psi)\right|^{p_{i}}dx\notag\\
	& \leq C\limsup_{\epsilon\rightarrow0}\left[\left(\int_{\Omega}\chi_{1}^{\epsilon}(x)\left|\varphi-\Psi\right|^{p_{1}}dx\right)^{\frac{1}{3-p_{1}}}\left(\left|\Omega\right|+\int_{\Omega}\chi_{1}^{\epsilon}(x)\left|\varphi\right|^{p_{1}}dx\right.\right.\notag\\
	&\quad\left.+ 
\int_{\Omega}\chi_{2}^{\epsilon}(x)\left|\varphi\right|^{p_{2}}dx+ \int_{\Omega}\chi_{1}^{\epsilon}(x)\left|\Psi\right|^{p_{1}}dx + \int_{\Omega}\chi_{2}^{\epsilon}(x)\left|\Psi\right|^{p_{2}}dx\right)^{\frac{2-p_{1}}{3-p_{1}}}\\	
	&+ \left(\int_{\Omega}\chi_{2}^{\epsilon}(x)\left|\varphi-\Psi\right|^{p_{2}}dx\right)^{\frac{p_{1}}{p_{2}-p_{1}p_{2}+p_{1}}}\left(\left|\Omega\right|+\int_{\Omega}\chi_{1}^{\epsilon}(x)\left|\varphi\right|^{p_{1}}dx+ \int_{\Omega}\chi_{2}^{\epsilon}(x)\left|\varphi\right|^{p_{2}}dx\right.\notag\\ 
	&\quad\left.+ \int_{\Omega}\chi_{1}^{\epsilon}(x)\left|\Psi\right|^{p_{1}}dx + \int_{\Omega}\chi_{2}^{\epsilon}(x)\left|\Psi\right|^{p_{2}}dx\right)^{\frac{p_{2}(2-p_{1})}{p_{2}-p_{1}p_{2}+p_{1}}}\notag\\
	&+ \left(\int_{\Omega}\chi_{1}^{\epsilon}(x)\left|\varphi-\Psi\right|^{p_{1}}dx\right)^{\frac{p_{2}}{p_{1}-p_{1}p_{2}+p_{2}}}\left(\left|\Omega\right|+\int_{\Omega}\chi_{1}^{\epsilon}(x)\left|\varphi\right|^{p_{1}}dx+ 
\int_{\Omega}\chi_{2}^{\epsilon}(x)\left|\varphi\right|^{p_{2}}dx\right.\notag\\
	&\quad\left.+ \int_{\Omega}\chi_{1}^{\epsilon}(x)\left|\Psi\right|^{p_{1}}dx + \int_{\Omega}\chi_{2}^{\epsilon}(x)\left|\Psi\right|^{p_{2}}dx\right)^{\frac{p_{1}(2-p_{2})}{p_{1}-p_{1}p_{2}+p_{2}}}\notag\\
	&+ \left(\int_{\Omega}\chi_{2}^{\epsilon}(x)\left|\varphi-\Psi\right|^{p_{2}}dx\right)^{\frac{1}{3-p_{2}}}\left(\left|\Omega\right|+\int_{\Omega}\chi_{1}^{\epsilon}(x)\left|\varphi\right|^{p_{1}}dx+ 
\int_{\Omega}\chi_{2}^{\epsilon}(x)\left|\varphi\right|^{p_{2}}dx\right.\notag\\
	&\quad\left.\left.+ \int_{\Omega}\chi_{1}^{\epsilon}(x)\left|\Psi\right|^{p_{1}}dx + \int_{\Omega}\chi_{2}^{\epsilon}(x)\left|\Psi\right|^{p_{2}}dx\right)^{\frac{2-p_{2}}{3-p_{2}}}\right]\notag
\end{align}
	\item For $1<p_{1}\leq2\leq p_{2}$:
\begin{align}
\label{lemma3formula2}
	\displaystyle 	&\limsup_{\epsilon\rightarrow0}\sum_{i=1}^{2}\int_{\Omega}\chi_{i}^{\epsilon}(x)\left|P_{\epsilon}(x,M_{\epsilon}\varphi)-P_{\epsilon}(x,\Psi)\right|^{p_{i}}dx\notag\\
	& \leq C\limsup_{\epsilon\rightarrow0}\left[\left(\int_{\Omega}\chi_{1}^{\epsilon}(x)\left|\varphi-\Psi\right|^{p_{1}}dx\right)^{\frac{1}{3-p_{1}}}\left(\left|\Omega\right|+\int_{\Omega}\chi_{1}^{\epsilon}(x)\left|\varphi\right|^{p_{1}}dx\right.\right.\notag\\
	&\quad\left.+ 
\int_{\Omega}\chi_{2}^{\epsilon}(x)\left|\varphi\right|^{p_{2}}dx + \int_{\Omega}\chi_{1}^{\epsilon}(x)\left|\Psi\right|^{p_{1}}dx + \int_{\Omega}\chi_{2}^{\epsilon}(x)\left|\Psi\right|^{p_{2}}dx\right)^{\frac{2-p_{1}}{3-p_{1}}}\\
	&
+\left(\int_{\Omega}\chi_{2}^{\epsilon}(x)\left|\varphi-\Psi\right|^{p_{2}}dx\right)^{\frac{p_{1}}{2p_{2}-p_{1}}}\left(\left|\Omega\right|+\int_{\Omega}\chi_{1}^{\epsilon}(x)\left|\varphi\right|^{p_{1}}dx+ \int_{\Omega}\chi_{2}^{\epsilon}(x)\left|\varphi\right|^{p_{2}}dx\right.\notag\\
	&\quad\left.+ \int_{\Omega}\chi_{1}^{\epsilon}(x)\left|\Psi\right|^{p_{1}}dx + \int_{\Omega}\chi_{2}^{\epsilon}(x)\left|\Psi\right|^{p_{2}}dx\right)^{\frac{2(p_{2}-p_{1})}{2p_{2}-p_{1}}}\notag\\ 
	&+ \int_{\Omega}\chi_{1}^{\epsilon}(x)\left|\varphi-\Psi\right|^{p_{1}}dx+\left(\int_{\Omega}\chi_{2}^{\epsilon}(x)\left|\varphi-\Psi\right|^{p_{2}}dx\right)^{\frac{1}{p_{2}-1}}\left(\left|\Omega\right|+\int_{\Omega}\chi_{1}^{\epsilon}(x)\left|\varphi\right|^{p_{1}}dx\right.\notag\\
	&\quad\left.\left.+ 
\int_{\Omega}\chi_{2}^{\epsilon}(x)\left|\varphi\right|^{p_{2}}dx + \int_{\Omega}\chi_{1}^{\epsilon}(x)\left|\Psi\right|^{p_{1}}dx + \int_{\Omega}\chi_{2}^{\epsilon}(x)\left|\Psi\right|^{p_{2}}dx\right)^{\frac{p_{2}-2}{p_{2}-1}}\right]\notag
\end{align}	
\end{itemize}
\end{lemma}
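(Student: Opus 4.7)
My plan is to reduce the estimate to a sum of cellwise inequalities provided by Lemma~\ref{lemma2}, then pass from cell sums to global integrals via the discrete H\"older inequality, exploiting the piecewise constant nature of both $M_\epsilon\varphi$ (constant on each scaled cell $Y_\epsilon^k$) and $\Psi$ (constant on each $\Omega_j$). First I would partition $I_\epsilon$ into ``good'' indices $I_\epsilon^g$ for which $Y_\epsilon^k\subset\Omega_{j(k)}$ for some $j(k)\in\{0,\ldots,m\}$, and ``bad'' indices for which $Y_\epsilon^k$ crosses one of the interfaces $\partial\Omega_j$. Since $|\partial\Omega_j|=0$ and $\overline{\Omega_j}\subset\subset\Omega$, the total measure of the bad cells together with the uncovered strip $\Omega\setminus\bigcup_{k\in I_\epsilon}Y_\epsilon^k$ tends to zero as $\epsilon\to 0$.

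On each good cell, $M_\epsilon\varphi\equiv\xi_k:=\tfrac{1}{|Y_\epsilon^k|}\int_{Y_\epsilon^k}\varphi$ and $\Psi\equiv\eta_{j(k)}$, so Lemma~\ref{lemma2} applies with $\xi_1=\xi_k$ and $\xi_2=\eta_{j(k)}$. Each of the four summands on the right-hand side of (\ref{Lemma 2 epsilon}) has the form $\theta_i^a|\xi_1-\xi_2|^{p_i a}(1+\Xi_k)^{1-a}|Y_\epsilon^k|$ for a H\"older pair $(a,1-a)$ summing to one. I would factor it as $\bigl(\theta_i|Y_\epsilon^k||\xi_k-\eta_{j(k)}|^{p_i}\bigr)^a\bigl((1+\Xi_k)|Y_\epsilon^k|\bigr)^{1-a}$ and apply discrete H\"older to sum over $I_\epsilon^g$. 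Via the identity $\theta_i|Y_\epsilon^k|=\int_{Y_\epsilon^k}\chi_i^\epsilon\,dx$, the first aggregate equals $\int\chi_i^\epsilon|M_\epsilon\varphi-\Psi|^{p_i}\,dx$; using Jensen's inequality $|M_\epsilon g|^{p_i}\le M_\epsilon(|g|^{p_i})$ together with the elementary bound $|s|^{p_1}\le 1+|s|^{p_2}$ (valid when $p_1\le p_2$), this is controlled by $\int\chi_i^\epsilon|\varphi-\Psi|^{p_i}\,dx$ plus a term absorbed into $O(|\Omega|)$. The second aggregate similarly becomes $|\Omega|+\int\chi_1^\epsilon|\varphi|^{p_1}+\int\chi_2^\epsilon|\varphi|^{p_2}+\int\chi_1^\epsilon|\Psi|^{p_1}+\int\chi_2^\epsilon|\Psi|^{p_2}$.

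The bad-cell contribution is bounded by applying Lemma~\ref{lemma1} to the cell-constant values of $M_\epsilon\varphi$ and $\Psi$: this gives $\int_{Y_\epsilon^k}\chi_i^\epsilon|P_\epsilon(x,\xi)|^{p_i}\,dx\le C(1+|\xi|^{p_i})|Y_\epsilon^k|$, and combined with the uniform $\varphi$-hypothesis (which also bounds $|M_\epsilon\varphi|$ in the mixed norm via Jensen) and $|\bigcup_{k\in I_\epsilon^b}Y_\epsilon^k|\to 0$, a standard uniform-integrability argument shows this contribution vanishes in the $\limsup$. Taking $\limsup_{\epsilon\to 0}$ of the four H\"older-split estimates yields (\ref{lemma3formula}); the case $1<p_1\le 2\le p_2$ is analogous via (\ref{Lemma 2b epsilon}), whose third summand $\theta_1|\xi_1-\xi_2|^{p_1}|Y_\epsilon^k|$ has trivial H\"older exponent $a=1$ and so produces the undecorated term $\int\chi_1^\epsilon|\varphi-\Psi|^{p_1}\,dx$ appearing in (\ref{lemma3formula2}). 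The principal technical difficulty is the conversion step: the naive Jensen bound yields an \emph{unweighted} integral $\theta_i\int_{Y_\epsilon^k}|\varphi-\Psi|^{p_i}\,dy$, which must be split as $\theta_i\int\chi_1^\epsilon|\cdot|^{p_i}+\theta_i\int\chi_2^\epsilon|\cdot|^{p_i}$, with the ``wrong-material'' summand converted through $|s|^{p_1}\le 1+|s|^{p_2}$; executing this uniformly across all four H\"older pairs so the resulting constants match exactly the four summands of (\ref{lemma3formula}) and (\ref{lemma3formula2}) is the delicate bookkeeping, deferred to the Appendix.
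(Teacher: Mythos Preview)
Your overall architecture---good/bad cell decomposition, cellwise application of Lemma~\ref{lemma2}, discrete H\"older to aggregate---is exactly what the paper does. The divergence from the paper, and the genuine gap in your argument, is in the conversion step from $\int\chi_i^\epsilon|M_\epsilon\varphi-\Psi|^{p_i}\,dx$ to $\int\chi_i^\epsilon|\varphi-\Psi|^{p_i}\,dx$.

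Your route via Jensen $|M_\epsilon g|^{p_i}\le M_\epsilon(|g|^{p_i})$ destroys the weight: integrating $\chi_i^\epsilon M_\epsilon(|\varphi-\Psi|^{p_i})$ over a cell gives $\theta_i\int_{Y_\epsilon^k}|\varphi-\Psi|^{p_i}$, unweighted. Your proposed repair through $|s|^{p_1}\le 1+|s|^{p_2}$ only goes one way. For the $\theta_2$-terms (the second and fourth summands of (\ref{Lemma 2 epsilon})) you would need to control $\theta_2\int\chi_1^\epsilon|\varphi-\Psi|^{p_2}\,dx$, i.e.\ a $p_2$-integral over material~$1$; but the hypothesis of the lemma bounds only $\int\chi_1^\epsilon|\varphi|^{p_1}$, and there is no inequality $|s|^{p_2}\le C(1+|s|^{p_1})$. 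So this piece is simply uncontrolled, and even for the $\theta_1$-terms your bound acquires an extra $\int\chi_2^\epsilon|\varphi-\Psi|^{p_2}$ factor that does not appear in (\ref{lemma3formula}). The ``delicate bookkeeping'' you defer cannot close.

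The paper's fix is simpler and preserves the weight throughout: use the pointwise triangle inequality
\[
|M_\epsilon\varphi-\Psi|^{p_i}\le C\bigl(|M_\epsilon\varphi-\varphi|^{p_i}+|\varphi-\Psi|^{p_i}\bigr),
\]
integrate against $\chi_i^\epsilon$ (the weight never leaves), and then invoke Property~(1) of $M_\epsilon$ from Section~\ref{CorrectorSection} to get $\int_\Omega\chi_i^\epsilon|M_\epsilon\varphi-\varphi|^{p_i}\,dx\to 0$. Taking $\limsup_{\epsilon\to 0}$ kills this extra piece and leaves exactly the four summands in (\ref{lemma3formula}). Replace your Jensen-plus-weight-fix paragraph with this triangle-inequality step and the proof goes through.
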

	
\begin{lemma}
\label{proofaprioribound}
Let $u_{\epsilon}$ be the solution to (\ref{Dirichlet}).  Then the a priori bound (\ref{aprioribound}) holds.
\end{lemma}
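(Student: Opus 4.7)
The plan is to test (\ref{Dirichlet}) against $u_\epsilon$ itself and exploit (\ref{MonA}) specialized at $\xi_2=0$ to obtain a coercivity estimate in the mixed-exponent quantity that appears in (\ref{aprioribound}). Set
\begin{equation*}
E_\epsilon \;:=\; \int_{\Omega}\chi_{1}^{\epsilon}(x)|\nabla u_{\epsilon}(x)|^{p_{1}}\,dx+\int_{\Omega}\chi_{2}^{\epsilon}(x)|\nabla u_{\epsilon}(x)|^{p_{2}}\,dx.
\end{equation*}
Property (2) gives $A(y,0)=0$, so applying (\ref{MonA}) to the pair $(\xi,0)$ yields the pointwise lower bound $(A(y,\xi),\xi)\geq C|\xi|^{\beta(y)}|\xi|^{p(y)-\beta(y)}=C|\xi|^{p(y)}$. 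Integrating over $\Omega$ and using the weak form of (\ref{Dirichlet}) with $u_\epsilon$ itself as test function produces
\begin{equation*}
C\,E_\epsilon \;\leq\; \int_\Omega (A_\epsilon(x,\nabla u_\epsilon),\nabla u_\epsilon)\,dx \;=\; \langle f, u_\epsilon\rangle_{W^{-1,q_2},W_0^{1,p_1}}.
\end{equation*}

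Next I would estimate the right-hand side by duality and Poincar\'e's inequality, obtaining $\langle f, u_\epsilon\rangle \leq \|f\|_{W^{-1,q_2}}\|u_\epsilon\|_{W_0^{1,p_1}} \leq C_1\|\nabla u_\epsilon\|_{L^{p_1}(\Omega)}$. To close the loop I must re-express $\|\nabla u_\epsilon\|_{L^{p_1}}$ in terms of $E_\epsilon$. Splitting with $\chi_1^\epsilon+\chi_2^\epsilon=1$ and applying H\"older's inequality with exponent $p_2/p_1\geq 1$ on the $\chi_2^\epsilon$ piece gives
\begin{equation*}
\int_\Omega |\nabla u_\epsilon|^{p_1}\,dx \;\leq\; \int_\Omega\chi_1^\epsilon|\nabla u_\epsilon|^{p_1}\,dx + |\Omega|^{1-p_1/p_2}\left(\int_\Omega\chi_2^\epsilon|\nabla u_\epsilon|^{p_2}\,dx\right)^{p_1/p_2} \;\leq\; E_\epsilon + C_2\,E_\epsilon^{p_1/p_2},
\end{equation*}
and hence $\|\nabla u_\epsilon\|_{L^{p_1}}\leq C_3\bigl(E_\epsilon^{1/p_1}+E_\epsilon^{1/p_2}\bigr)$.

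Combining the two estimates produces the self-improving inequality
\begin{equation*}
E_\epsilon \;\leq\; C_4\bigl(E_\epsilon^{1/p_1}+E_\epsilon^{1/p_2}\bigr)
\end{equation*}
with $C_4$ independent of $\epsilon$. Since $1<p_1\leq p_2$, both exponents $1/p_1$ and $1/p_2$ are strictly less than $1$, so either $E_\epsilon\leq 1$ or, after dividing by the larger right-hand term $E_\epsilon^{1/p_1}$, one concludes $E_\epsilon\leq C_4^{p_1/(p_1-1)}$ uniformly in $\epsilon$. The step that requires the most care is the H\"older interpolation bridging the mixed-exponent coercivity on the left and the single-exponent $L^{p_1}$ bound on the right dictated by $u_\epsilon\in W_0^{1,p_1}$; this is where the hypothesis $p_1\leq p_2$ enters essentially, since only then does the $p_2$-integrability on phase $2$ control the missing $p_1$ mass there. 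The remaining manipulations are routine absorption.
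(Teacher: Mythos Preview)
Your proof is correct and follows essentially the same line as the paper's: test the equation against $u_\epsilon$, use the coercivity $(A(y,\xi),\xi)\geq C|\xi|^{p(y)}$ to bound $E_\epsilon$ from below, control $\langle f,u_\epsilon\rangle$ via $\|\nabla u_\epsilon\|_{L^{p_1}}$, and split the latter into its $\chi_1^\epsilon$ and $\chi_2^\epsilon$ pieces with H\"older on phase~2. The only cosmetic differences are that the paper reads the coercivity directly from the explicit form $A(y,\xi)=\sigma(y)|\xi|^{p(y)-2}\xi$ rather than invoking (\ref{MonA}), and it absorbs the sublinear powers $E_\epsilon^{1/p_1}$, $E_\epsilon^{1/p_2}$ via Young's inequality with a small parameter $\delta$ instead of your division argument; both closures are equivalent.
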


\begin{lemma}
\label{uniform boundedness of p at M}
	If the microstructure is dispersed or layered, we have that $$\sup_{\epsilon>0}\left\{\int_{\Omega}\chi_{i}^{\epsilon}(x)\left|P_{\epsilon}(x,M_{\epsilon}\nabla u(x))\right|^{p_{i}}dx\right\}\leq C<\infty \text{, for $i=1,2$.}$$
\end{lemma}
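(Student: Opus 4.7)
The plan is to reduce to Lemma~\ref{lemma1} by exploiting that $M_\epsilon\nabla u$ is piecewise constant on the partition $\{Y_\epsilon^k\}_{k\in I_\epsilon}$, so the desired bound decomposes cell by cell.

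First I would set $\xi_k^\epsilon := \frac{1}{|Y_\epsilon^k|}\int_{Y_\epsilon^k}\nabla u(y)\,dy$, so that $M_\epsilon\nabla u \equiv \xi_k^\epsilon$ on $Y_\epsilon^k$. Because $\chi_i^\epsilon(x)$ and $P_\epsilon(x,\xi)$ are $\epsilon$-periodic in $x$, the bound (\ref{Lemma 1 epsilon}) applies on every translate $Y_\epsilon^k$ with the constant argument $\xi = \xi_k^\epsilon$, yielding
\begin{equation*}
\int_{Y_\epsilon^k}\chi_1^\epsilon |P_\epsilon(x,\xi_k^\epsilon)|^{p_1}\,dx + \int_{Y_\epsilon^k}\chi_2^\epsilon |P_\epsilon(x,\xi_k^\epsilon)|^{p_2}\,dx \leq C\bigl(1 + |\xi_k^\epsilon|^{p_1}\theta_1 + |\xi_k^\epsilon|^{p_2}\theta_2\bigr)|Y_\epsilon|.
\end{equation*}
Summing over $k\in I_\epsilon$ and using $\sum_k |\xi_k^\epsilon|^{p_i}|Y_\epsilon| = \|M_\epsilon\nabla u\|_{L^{p_i}(\bigcup_k Y_\epsilon^k)}^{p_i}$, I get
\begin{equation*}
\sum_{i=1}^{2}\int_{\bigcup_k Y_\epsilon^k}\chi_i^\epsilon |P_\epsilon(x,M_\epsilon\nabla u)|^{p_i}\,dx \leq C\bigl(|\Omega| + \theta_1\|M_\epsilon\nabla u\|_{L^{p_1}(\Omega)}^{p_1} + \theta_2\|M_\epsilon\nabla u\|_{L^{p_2}(\Omega)}^{p_2}\bigr).
\end{equation*}
On the complement $\Omega\setminus\bigcup_k Y_\epsilon^k$ one has $M_\epsilon\nabla u = 0$, and property~(\ref{p4}) gives $P_\epsilon(x,0)=0$, so that region contributes nothing to the left-hand side.

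Next I would invoke Jensen's inequality in the form of property~(3) following (\ref{approximation}) to obtain $\|M_\epsilon\nabla u\|_{L^{p_i}(\Omega)} \leq \|\nabla u\|_{L^{p_i}(\Omega)}$ for $i=1,2$. The dispersed or layered hypothesis enters at exactly this point through Theorem~\ref{regularity}, which guarantees $u\in W_0^{1,p_2}(\Omega)$, hence $\nabla u\in L^{p_2}(\Omega)$; since $\Omega$ is bounded and $p_1\leq p_2$, also $\nabla u\in L^{p_1}(\Omega)$. Both norms are finite and $\epsilon$-independent, so the right-hand side is uniformly bounded and the claim follows.

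The only conceptual obstacle is controlling the $\theta_2\|M_\epsilon\nabla u\|_{L^{p_2}}^{p_2}$ term: for a general microstructure one would only know $\nabla u\in L^{p_1}(\Omega)$ from the standard homogenization theorem (Theorem~\ref{homogenization}), which would leave this term out of reach. The restriction to dispersed or layered geometries is precisely what upgrades the integrability of $\nabla u$ to $L^{p_2}$ via Theorem~\ref{regularity}, and this is why the lemma is stated under that hypothesis.
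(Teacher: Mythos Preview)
Your proof is correct and follows essentially the same route as the paper: decompose over the cells $Y_\epsilon^k$, apply the $\epsilon$-version of Lemma~\ref{lemma1} on each cell with the constant $\xi_\epsilon^k$, sum, and then control the resulting averages by $\|\nabla u\|_{L^{p_i}(\Omega)}$ via Jensen's inequality and Theorem~\ref{regularity}. Your additional remarks on the complement $\Omega\setminus\bigcup_k Y_\epsilon^k$ and on why the dispersed/layered hypothesis is needed are correct and make the argument slightly more explicit than the paper's version.
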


We use Lemma~\ref{lemma1} to prove structure conditions of $b$ (\ref{b}) in the following lemma.
\begin{lemma}
\label{monconb}
The function $b$, defined in (\ref{b}), satisfies the following structure properties: for every $\xi_{1},\xi_{2}\in\mathbb{R}^{n}$
\begin{enumerate}
\item Monotonicity: 
\begin{equation}
	\label{Monb}
	\displaystyle 
	\left(b(\xi_{2}) - b(\xi_{1}),\xi_{2}-\xi_{1}\right)\geq 0 
\end{equation} 
\item Continuity: There exists a positive constant $C$ such that
\begin{itemize}
	\item For $1<p_{1}\leq p_{2}\leq2$:
\begin{align}
\label{Conb}
	\displaystyle 
	&\left|b(\xi_{1}) - b(\xi_{2})\right|\\
	&\quad\leq C\left[\left|\xi_{2}-\xi_{1}\right|^{\frac{p_{1}-1}{3-p_{1}}}\left(1+\theta_{1}\left|\xi_{1}\right|^{p_{1}}+\theta_{2}\left|\xi_{1}\right|^{p_{2}}+\theta_{1}\left|\xi_{2}\right|^{p_{1}}+\theta_{2}\left|\xi_{2}\right|^{p_{2}}\right)^{\frac{(2-p_{1})(p_{1}-1)}{p_{1}(3-p_{1})}}\right.\notag\\
	&\qquad\left.+ \left|\xi_{2}-\xi_{1}\right|^{\frac{p_{2}-1}{3-p_{2}}}\left(1+\theta_{1}\left|\xi_{1}\right|^{p_{1}}+\theta_{2}\left|\xi_{1}\right|^{p_{2}}+\theta_{1}\left|\xi_{2}\right|^{p_{1}}+\theta_{2}\left|\xi_{2}\right|^{p_{2}}\right)^{\frac{(2-p_{2})(p_{2}-1)}{p_{2}(3-p_{2})}}\right]\notag
\end{align}	
	\item  For $1<p_{1}\leq2\leq p_{2}$:
\begin{align}
\label{Conb2}
	\displaystyle 
	&\left|b(\xi_{1}) - b(\xi_{2})\right|\\
	&\quad\leq C\left[\left|\xi_{2}-\xi_{1}\right|^{\frac{p_{1}-1}{3-p_{1}}}\left(1+\theta_{1}\left|\xi_{1}\right|^{p_{1}}+\theta_{2}\left|\xi_{1}\right|^{p_{2}}+\theta_{1}\left|\xi_{2}\right|^{p_{1}}+\theta_{2}\left|\xi_{2}\right|^{p_{2}}\right)^{\frac{(2-p_{1})(p_{1}-1)}{p_{1}(3-p_{1})}}\right.\notag\\
	&\qquad\left.+ \left|\xi_{2}-\xi_{1}\right|^{\frac{1}{p_{2}-1}}\left(1+\theta_{1}\left|\xi_{1}\right|^{p_{1}}+\theta_{2}\left|\xi_{1}\right|^{p_{2}}+\theta_{1}\left|\xi_{2}\right|^{p_{1}}+\theta_{2}\left|\xi_{2}\right|^{p_{2}}\right)^{\frac{p_{2}-2}{p_{2}-1}}\right]\notag
\end{align}		
\end{itemize}
\end{enumerate}
These structure conditions for $b$ are different to the ones obtained in \cite{Jimenez2010} and their proofs require different techniques, for example, the use of (\ref{conb1}) to obtain (\ref{Conb}).  These structure conditions (\ref{Conb}) and (\ref{Conb2}) will be used in the proof of Theorem~\ref{corrector}.    
\end{lemma}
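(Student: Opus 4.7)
For \emph{monotonicity}, I would compute
$$(b(\xi_2)-b(\xi_1),\xi_2-\xi_1)=\int_Y\bigl(A(y,P(y,\xi_2))-A(y,P(y,\xi_1)),\,\xi_2-\xi_1\bigr)\,dy,$$
and substitute $\xi_2-\xi_1=\bigl(P(y,\xi_2)-P(y,\xi_1)\bigr)-\nabla(\upsilon_{\xi_2}-\upsilon_{\xi_1})(y)$ from (\ref{p}). Since $\upsilon_{\xi_2}-\upsilon_{\xi_1}\in W_{per}^{1,p_1}(Y)$ is an admissible test function, testing the cell problem (\ref{cell}) with it, for each of $\xi_1$ and $\xi_2$, cancels the $\nabla$-contribution. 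What remains is $\int_Y\bigl(A(y,P(y,\xi_2))-A(y,P(y,\xi_1)),\,P(y,\xi_2)-P(y,\xi_1)\bigr)\,dy$, which is non-negative by (\ref{MonA}) (the right-hand side of (\ref{MonA}) is manifestly $\geq 0$).

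For \emph{continuity}, I would argue by duality. For $\eta\in\mathbb{R}^n$ with $|\eta|=1$, testing (\ref{cell}) against $w=\upsilon_\eta$ in turn for each $\xi_i$ yields the identity
$$(b(\xi_1)-b(\xi_2),\eta)=\int_Y\bigl(A(y,P(y,\xi_1))-A(y,P(y,\xi_2)),\,P(y,\eta)\bigr)\,dy,$$
which I expect to be the formula alluded to in the closing remark of the lemma. Cauchy--Schwarz in the inner product and (\ref{ConA}) pointwise then reduce matters, after splitting by $\chi_1$ and $\chi_2$, to integrals of the form $\int_Y\chi_i|P_1-P_2|^{\alpha_i}(1+|P_1|+|P_2|)^{p_i-1-\alpha_i}|P(y,\eta)|\,dy$. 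When $p_i\leq 2$ (so $\alpha_i=p_i-1$ and the middle factor is $1$), the two-factor H\"older with exponents $(p_i/(p_i-1),p_i)$ suffices: I bound the $L^{p_i}$ norm of $P(\cdot,\xi_1)-P(\cdot,\xi_2)$ by Lemma~\ref{lemma2} at $\epsilon=1$, and the $L^{p_i}$ norm of $P(\cdot,\eta)$ by Lemma~\ref{lemma1}. When $p_2\geq 2$ (so $\alpha_2=1$), I would instead use the three-factor H\"older with exponents $(p_2,\,p_2/(p_2-2),\,p_2)$; the middle integral $\int\chi_2(1+|P_1|+|P_2|)^{p_2}$ is controlled by Lemma~\ref{lemma1} after the elementary $(1+a+b)^{p_2}\leq C(1+a^{p_2}+b^{p_2})$. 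This three-factor step is precisely what delivers the correct $(1+\cdots)^{(p_2-2)/(p_2-1)}$ growth in (\ref{Conb2}); a direct two-factor H\"older falls short. Since $|\eta|=1$, the $P(y,\eta)$ factor contributes only a harmless constant. Taking the supremum over $|\eta|=1$ recovers $|b(\xi_1)-b(\xi_2)|$.

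The main obstacle, I expect, is distilling the compact two-term form of (\ref{Conb})/(\ref{Conb2}) from the four-term bound of Lemma~\ref{lemma2}. The ``pure'' first and fourth terms there, after the H\"older reduction, reproduce exactly the two displayed terms in the target, but the two ``mixing'' terms carry intermediate exponents on $|\xi_1-\xi_2|$ strictly between those two. I would absorb them into the two displayed terms via subadditivity $(a+b)^\alpha\leq a^\alpha+b^\alpha$ for $\alpha\in(0,1)$ combined with a dichotomy on $|\xi_1-\xi_2|$: if $|\xi_1-\xi_2|\leq 1$ the smaller displayed exponent dominates the mixing one, while if $|\xi_1-\xi_2|\geq 1$ the larger displayed exponent does; the companion $(1+\cdots)$ factor can always be enlarged (since $(1+\cdots)\geq 1$) to match the displayed growth exponent. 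This bookkeeping is routine but must be verified separately for the regimes $1<p_1\leq p_2\leq 2$ and $1<p_1\leq 2\leq p_2$.
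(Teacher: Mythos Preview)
Your treatment of monotonicity is correct and coincides with the paper's.

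For continuity, however, your route diverges from the paper's, and the final simplification step you propose does not go through. The formula labeled (\ref{conb1}) in the paper is \emph{not} your duality identity; it is the self-referencing bound
\[
\sum_{i=1}^2\int_Y \chi_i(y)\,\frac{|P(y,\xi_1)-P(y,\xi_2)|^{2}}{\bigl(|P(y,\xi_1)|+|P(y,\xi_2)|\bigr)^{2-p_i}}\,dy \;\le\; C\,\bigl|b(\xi_2)-b(\xi_1)\bigr|\,|\xi_2-\xi_1|,
\]
obtained from (\ref{MonA}), (\ref{cell}) and (\ref{b}). The paper then estimates $|b(\xi_1)-b(\xi_2)|\le\int_Y|A(y,P_1)-A(y,P_2)|\,dy\le C\sum_i\int_Y\chi_i|P_1-P_2|^{p_i-1}\,dy$, uses H\"older to write each summand as the $(p_i-1)/2$ power of the left side of (\ref{conb1}) times a factor controlled by Lemma~\ref{lemma1}, substitutes (\ref{conb1}), and arrives at an inequality of the shape $|b(\xi_1)-b(\xi_2)|\le C\sum_i|b(\xi_1)-b(\xi_2)|^{(p_i-1)/2}\cdot(\ldots)$. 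A Young inequality then absorbs the fractional powers of $|b(\xi_1)-b(\xi_2)|$ into the left side and yields the clean two-term form (\ref{Conb}) directly, never invoking Lemma~\ref{lemma2}.

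Your approach through the duality identity and Lemma~\ref{lemma2} is sound and does produce a valid modulus of continuity for $b$, but the dichotomy you sketch for collapsing the resulting eight terms into the two displayed ones fails: the claim that the $(1+\cdots)$ factor ``can always be enlarged'' is false. For instance, in the regime $1<p_1\le p_2\le 2$, raising the \emph{first} summand on the right of Lemma~\ref{lemma2} to the power $(p_2-1)/p_2$ (the $\chi_2$ contribution) produces a term whose growth exponent on $(1+\cdots)$ is $(2-p_1)(p_2-1)/\bigl(p_2(3-p_1)\bigr)$; at $p_1=1.2$, $p_2=1.8$ this is $\approx 0.198$, strictly larger than \emph{both} displayed growth exponents $(2-p_i)(p_i-1)/\bigl(p_i(3-p_i)\bigr)\approx 0.074$. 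Since its $|\xi_1-\xi_2|$ exponent lies strictly between the two displayed ones, taking $|\xi_1-\xi_2|$ fixed and $|\xi_1|,|\xi_2|\to\infty$ shows this term cannot be dominated by either displayed term. The paper's bootstrap sidesteps Lemma~\ref{lemma2} and its mixing terms altogether, which is precisely the point of the closing remark in the lemma.
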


\begin{lemma}
\label{unifboundDP}
For all $j=0,...,m$, we have that $\displaystyle \int_{\Omega_{j}}\left|\left(A_{\epsilon}\left(x,P_{\epsilon}\left(x,\eta_{j}\right)\right),\nabla u_{\epsilon}(x)\right)\right|dx$ and $\displaystyle \int_{\Omega_{j}}\left|\left(A_{\epsilon}\left(x,\nabla u_{\epsilon}(x)\right),P_{\epsilon}\left(x,\eta_{j}\right)\right)\right|dx$ are uniformly bounded with respect to $\epsilon$.
\end{lemma}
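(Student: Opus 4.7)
The plan is to reduce the statement to H\"older's inequality once the phase-wise $L^{p_i}$ bounds on $\nabla u_\epsilon$ (Lemma \ref{proofaprioribound}) and on $P_\epsilon(\cdot,\eta_j)$ (Lemma \ref{lemma1}) are in hand. From the explicit form of $A$ in (\ref{A}), I would first record the elementary growth estimate
$$|A_\epsilon(x,\xi)|\;\leq\;C\,\chi_1^\epsilon(x)|\xi|^{p_1-1}+C\,\chi_2^\epsilon(x)|\xi|^{p_2-1},$$
which reduces everything to phase-wise computations.

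For the first integral, after splitting by the phase indicators, I would apply H\"older's inequality on material $i$ with conjugate exponents $p_i/(p_i-1)$ and $p_i$, using that $(p_i-1)\cdot p_i/(p_i-1)=p_i$:
$$\int_{\Omega_j}\chi_i^\epsilon\bigl|\bigl(A_\epsilon(x,P_\epsilon(x,\eta_j)),\nabla u_\epsilon(x)\bigr)\bigr|\,dx \leq C\Bigl(\int_{\Omega_j}\chi_i^\epsilon|P_\epsilon(x,\eta_j)|^{p_i}\,dx\Bigr)^{\frac{p_i-1}{p_i}}\Bigl(\int_{\Omega_j}\chi_i^\epsilon|\nabla u_\epsilon|^{p_i}\,dx\Bigr)^{\frac{1}{p_i}}.$$
The second factor is uniformly bounded in $\epsilon$ by the a priori bound (\ref{aprioribound}). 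For the first factor I would invoke (\ref{Lemma 1 epsilon}) of Lemma \ref{lemma1} applied with $\xi=\eta_j$ and summed over the $\epsilon$-cells $Y_\epsilon^k$ meeting $\Omega_j$, producing a bound of the form $C\bigl(1+|\eta_j|^{p_1}\theta_1+|\eta_j|^{p_2}\theta_2\bigr)|\Omega_j|$ up to a boundary layer of cells whose total volume is $O(\epsilon)$ and is therefore harmless. Summing over $i=1,2$ yields the uniform bound on the first integral.

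The second integral is handled identically, with the roles of $\nabla u_\epsilon$ and $P_\epsilon(\cdot,\eta_j)$ interchanged: on material $i$ the bound $|A_\epsilon(x,\nabla u_\epsilon)|\leq C|\nabla u_\epsilon|^{p_i-1}$ together with H\"older produces
$$\int_{\Omega_j}\chi_i^\epsilon\bigl|\bigl(A_\epsilon(x,\nabla u_\epsilon),P_\epsilon(x,\eta_j)\bigr)\bigr|\,dx \leq C\Bigl(\int_{\Omega_j}\chi_i^\epsilon|\nabla u_\epsilon|^{p_i}\,dx\Bigr)^{\frac{p_i-1}{p_i}}\Bigl(\int_{\Omega_j}\chi_i^\epsilon|P_\epsilon(x,\eta_j)|^{p_i}\,dx\Bigr)^{\frac{1}{p_i}},$$
and the same two uniform bounds close the argument. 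There is no substantive obstacle: the lemma is essentially a bookkeeping consequence of the two a priori estimates, and the only care required is the routine remark that the discrepancy between $\Omega_j$ and its covering by full periodicity cells contributes an error that vanishes with $\epsilon$ and hence does not affect $\epsilon$-uniformity.
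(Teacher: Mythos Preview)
Your proposal is correct and essentially identical to the paper's proof: both split by phase, use the growth bound $|A_\epsilon(x,\xi)|\leq C\chi_i^\epsilon|\xi|^{p_i-1}$ (the paper cites (\ref{ConA}) for this, you read it off the explicit form of $A$), apply H\"older with exponents $p_i$ and $p_i/(p_i-1)$, and close with the a priori bound (\ref{aprioribound}) and the cell estimate (\ref{Lemma 1 epsilon}). Your remark about the boundary layer of cells is slightly more than needed---one only needs that the union of $\epsilon$-cells covering $\Omega_j$ has bounded volume, not that the excess is $O(\epsilon)$---but it is harmless.
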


\begin{lemma}
\label{dunfordpettis}
	As $\epsilon\rightarrow0$, up to a subsequence, $\left(A_{\epsilon}\left(\cdot,P_{\epsilon}\left(\cdot,\eta_{j}\right)\right),\nabla u_{\epsilon}(\cdot)\right)$ converges weakly to a function $g_{j}\in L^{1}(\Omega_{j};\mathbb{R})$, for all $j=0,...,m$.  
In a similar way, up to a subsequence, $\left(A_{\epsilon}\left(\cdot,\nabla u_{\epsilon}(\cdot)\right),P_{\epsilon}\left(\cdot,\eta_{j}\right)\right)$ converges weakly to a function $h_{j}\in L^{1}(\Omega_{j};\mathbb{R})$, for all $j=0,...,m$.
\end{lemma}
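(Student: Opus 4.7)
The plan is to apply the Dunford--Pettis characterization of weak compactness in $L^1$: since Lemma~\ref{unifboundDP} already gives uniform $L^1(\Omega_j)$--boundedness of both sequences, it only remains to establish equi-integrability. Once this is done, standard Dunford--Pettis together with a diagonal extraction over the finite index set $j=0,\dots,m$ yields a single subsequence along which $\bigl(A_\epsilon(\cdot, P_\epsilon(\cdot, \eta_j)), \nabla u_\epsilon\bigr)$ and $\bigl(A_\epsilon(\cdot, \nabla u_\epsilon), P_\epsilon(\cdot, \eta_j)\bigr)$ converge weakly in $L^1(\Omega_j)$ to functions $g_j$ and $h_j$ respectively, for every $j$.

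For the first sequence I would split into phases and use the explicit form of $A$ to bound $\chi_i^\epsilon |A_\epsilon(x, P_\epsilon(x,\eta_j))| \leq \sigma_i \chi_i^\epsilon |P_\epsilon(x,\eta_j)|^{p_i - 1}$. H\"older's inequality with the conjugate pair $\bigl(p_i/(p_i-1),\,p_i\bigr)$ on each phase then gives, for every measurable $E\subset\Omega_j$,
\begin{equation*}
\int_E \chi_i^\epsilon \bigl|\bigl(A_\epsilon(\cdot, P_\epsilon(\cdot,\eta_j)), \nabla u_\epsilon\bigr)\bigr| \, dx \leq \sigma_i \Bigl(\int_E \chi_i^\epsilon |P_\epsilon(\cdot,\eta_j)|^{p_i}\, dx\Bigr)^{(p_i-1)/p_i} \Bigl(\int_E \chi_i^\epsilon |\nabla u_\epsilon|^{p_i}\, dx\Bigr)^{1/p_i}.
\end{equation*}
The second factor is uniformly bounded by the a priori estimate~\eqref{aprioribound}, so everything reduces to controlling the first factor uniformly in $\epsilon$ as $|E|\to 0$. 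For this I would invoke that $g(y) := \chi_i(y)|P(y,\eta_j)|^{p_i}$ is a \emph{fixed} $Y$-periodic function lying in $L^1(Y)$ by Lemma~\ref{lemma1}, so that its $\epsilon$-rescaling $g(\cdot/\epsilon) = \chi_i^\epsilon(x)|P_\epsilon(x,\eta_j)|^{p_i}$ forms an equi-integrable sequence on $\Omega$. This last fact is standard and follows from a truncation argument: write $g = g_{bdd} + g_{small}$ with $g_{bdd} \in L^\infty_{per}(Y)$ and $\|g_{small}\|_{L^1(Y)}$ as small as we like; then $g_{bdd}(\cdot/\epsilon)$ is uniformly $L^\infty$--bounded and $\|g_{small}(\cdot/\epsilon)\|_{L^1(\Omega)}$ is uniformly small in $\epsilon$.

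The second product is treated symmetrically: I would bound $\chi_i^\epsilon |A_\epsilon(\cdot, \nabla u_\epsilon)| \leq \sigma_i \chi_i^\epsilon |\nabla u_\epsilon|^{p_i-1}$ and apply H\"older with the same pair of exponents, with the roles of the two factors now reversed — $\bigl(\int_E \chi_i^\epsilon |\nabla u_\epsilon|^{p_i}\bigr)^{(p_i-1)/p_i}$ is uniformly bounded by~\eqref{aprioribound}, while $\bigl(\int_E \chi_i^\epsilon |P_\epsilon(\cdot,\eta_j)|^{p_i}\bigr)^{1/p_i}$ again vanishes uniformly in $\epsilon$ as $|E|\to 0$ by the same periodic equi-integrability. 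The main technical obstacle, and in fact the only place where the periodic structure enters beyond H\"older's inequality and the a priori bound, is precisely this equi-integrability of the periodic rescaling $\chi_i^\epsilon|P_\epsilon(\cdot,\eta_j)|^{p_i}$; the truncation argument outlined above requires nothing stronger than the $L^1(Y)$ bound supplied by Lemma~\ref{lemma1}, so no higher integrability of $P(\cdot,\eta_j)$ is needed.
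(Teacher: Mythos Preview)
Your proposal is correct and follows essentially the same route as the paper: both invoke Dunford--Pettis, take the $L^1$ bound from Lemma~\ref{unifboundDP}, split into phases, apply H\"older with the conjugate pair $(q_i,p_i)$ on each phase, bound the $\nabla u_\epsilon$ factor via~\eqref{aprioribound}, and reduce to equi-integrability of the periodic rescaling $\chi_i^\epsilon|P_\epsilon(\cdot,\eta_j)|^{p_i}$ (equivalently $\chi_i^\epsilon|A_\epsilon(\cdot,P_\epsilon(\cdot,\eta_j))|^{q_i}$). The only cosmetic difference is that where you supply an explicit truncation argument for this last step, the paper simply cites a standard result (Theorem~1.5 in \cite{Dacorogna1989}) to the same effect.
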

\section{Proof of Main Results}
\label{main}
\subsection{Proof of the Corrector Theorem}	
\label{proof corrector}
We are now in the position to give the proof of Theorem~\ref{corrector}.  We present the proof for the case when $1<p_{1}\leq p_{2}\leq2$, for $1<p_{1}\leq 2\leq p_{2}$ the proof is very similar and the correspondig formulas can be found in the Appendix, in Section~\ref{appendixcorrector}
\begin{proof}
Let $u_{\epsilon}\in W_{0}^{1,p_{1}}(\Omega)$ be the solutions of (\ref{Dirichlet}).  By (\ref{MonA}), Lemma~\ref{proofaprioribound}, and Lemma~{\ref{uniform boundedness of p at M}} we have that
\begin{align*}
	\displaystyle	
&\int_{\Omega}\left[\chi_{1}^{\epsilon}(x)\left|P_{\epsilon}\left(x,M_{\epsilon}\nabla u(x)\right)-\nabla u_{\epsilon}(x)\right|^{p_{1}} + \chi_{2}^{\epsilon}(x)\left|P_{\epsilon}\left(x,M_{\epsilon}\nabla u(x)\right)-\nabla u_{\epsilon}(x)\right|^{p_{2}}\right]dx\\
	&\leq C\left[\left(\left|\Omega\right|+\int_{\Omega}\chi_{1}^{\epsilon}(x)\left|P_{\epsilon}(x,M_{\epsilon}\nabla u(x))\right|^{p_{1}}dx+\int_{\Omega}\chi_{1}^{\epsilon}(x)\left|\nabla u_{\epsilon}(x)\right|^{p_{1}}dx\right)^{\frac{2-p_{1}}{2}}\right.\\
&\times\left(\int_{\Omega}\chi_{1}^{\epsilon}(x)\left(A_{\epsilon}\left(x,P_{\epsilon}\left(x,M_{\epsilon} \nabla u(x)\right)\right)-A_{\epsilon}\left(x,\nabla u_{\epsilon}(x)\right),P_{\epsilon}\left(x,M_{\epsilon}\nabla u(x)\right)-\nabla u_{\epsilon}(x)\right)dx\right)^{\frac{p_{1}}{2}}\\
&+\left(\int_{\Omega}\chi_{2}^{\epsilon}(x)\left(A_{\epsilon}\left(x,P_{\epsilon}\left(x,M_{\epsilon} \nabla u(x)\right)\right)-A_{\epsilon}\left(x,\nabla u_{\epsilon}(x)\right),P_{\epsilon}\left(x,M_{\epsilon}\nabla u(x)\right)-\nabla u_{\epsilon}(x)\right)dx\right)^{\frac{p_{2}}{2}}\\
&\left.\times\left(\int_{\Omega}\chi_{2}^{\epsilon}(x)\left(1+\left|P_{\epsilon}(x,M_{\epsilon}\nabla u(x))\right|^{p_2}+\left|\nabla u_{\epsilon}(x)\right|\right)^{p_{2}}dx\right)^{\frac{2-p_{2}}{2}} \right]\\
	&\leq C\sum_{i=1}^{2}\left(\int_{\Omega}\left(A_{\epsilon}\left(x,P_{\epsilon}\left(x,M_{\epsilon} \nabla u(x)\right)\right)-A_{\epsilon}\left(x,\nabla u_{\epsilon}(x)\right),P_{\epsilon}\left(x,M_{\epsilon}\nabla u(x)\right)-\nabla u_{\epsilon}(x)\right)dx\right)^{\frac{p_{i}}{2}}
\end{align*}

To prove Theorem~\ref{corrector}, we show that 
\begin{align*}
	\displaystyle
&\int_{\Omega}\left(A_{\epsilon}\left(x,P_{\epsilon}\left(x,M_{\epsilon}\nabla u(x)\right)\right)-A_{\epsilon}\left(x,\nabla u_{\epsilon}(x)\right),P_{\epsilon}\left(x,M_{\epsilon}\nabla u(x)\right)-\nabla u_{\epsilon}(x)\right)dx\\
&=\int_{\Omega}\left(A_{\epsilon}\left(x,P_{\epsilon}\left(x,M_{\epsilon}\nabla u\right)\right),P_{\epsilon}\left(x,M_{\epsilon}\nabla u\right)\right)dx-\int_{\Omega}\left(A_{\epsilon}\left(x,P_{\epsilon}\left(x,M_{\epsilon}\nabla u\right)\right),\nabla u_{\epsilon}\right)dx\\
	&\quad-\int_{\Omega}\left(A_{\epsilon}\left(x,\nabla u_{\epsilon}\right),P_{\epsilon}\left(x,M_{\epsilon}\nabla u\right)\right)dx+\int_{\Omega}\left(A_{\epsilon}\left(x,\nabla u_{\epsilon}\right),\nabla u_{\epsilon}\right)dx
\end{align*}
goes to 0, as $\epsilon\rightarrow0$.  This is done in four steps.
	
In what follows, we use the following notation $$\displaystyle \xi_{\epsilon}^{k}=\frac{1}{\left|Y_{\epsilon}^{k}\right|}\int_{Y_{\epsilon}^{k}}\nabla u dx.$$

\textbf{STEP~1}
\vspace{1mm}

Let us prove that
\begin{equation}
	\label{First}
	\displaystyle
\int_{\Omega}\left(A_{\epsilon}\left(x,P_{\epsilon}\left(x,M_{\epsilon}\nabla u\right)\right),P_{\epsilon}\left(x,M_{\epsilon}\nabla u\right)\right)dx \rightarrow \int_{\Omega}\left(b(\nabla u),\nabla u\right)dx
\end{equation}
as $\epsilon\rightarrow0$.

\begin{proof}
From (\ref{inner product a with p}) and (\ref{approximation}), we obtain
\begin{align*}
	\displaystyle
	& \int_{\Omega}\left(A_{\epsilon}\left(x,P_{\epsilon}\left(x,M_{\epsilon}\nabla u(x)\right)\right),P_{\epsilon}\left(x,M_{\epsilon}\nabla u(x)\right)\right)dx\\
	&\quad= \sum_{k\in I_{\epsilon}}\int_{Y_{\epsilon}^{k}}\left(A\left(\frac{x}{\epsilon},P\left(\frac{x}{\epsilon},\xi_{\epsilon}^{k}\right)\right),P\left(\frac{x}{\epsilon},\xi_{\epsilon}^{k}\right)\right)dx\\
	&\quad=\epsilon^{n}\sum_{k\in I_{\epsilon}}\int_{Y}\left(A\left(y,P\left(y,\xi_{\epsilon}^{k}\right)\right),P\left(y,\xi_{\epsilon}^{k}\right)\right)dy\\
	&\quad= \sum_{k\in I_{\epsilon}}\int_{\Omega}\chi_{Y_{\epsilon}^{k}}(x)\left(b(\xi_{\epsilon}^{k}),\xi_{\epsilon}^{k}\right)dx=\int_{\Omega}\left(b(M_{\epsilon}\nabla u(x)),M_{\epsilon}\nabla u(x)\right)dx.		
\end{align*}

By (\ref{Conb}) in Lemma~\ref{monconb}, H\"older's inequality, Theorem~\ref{regularity}, and Jensen's inequality, we have 
\begin{align*}
	\displaystyle
	&\int_{\Omega}\left|b(M_{\epsilon}\nabla u(x))-b(\nabla u(x))\right|^{q_{1}}dx\\
	&\quad\leq C\left[\left(\int_{\Omega}\left|M_{\epsilon}\nabla u-\nabla u\right|^{p_{2}}dx\right)^{\frac{p_{1}-1}{(p_{2}-1)(3-p_{1})}}+ \left(\int_{\Omega}\left|M_{\epsilon}\nabla u-\nabla u\right|^{p_{2}}dx\right)^{\frac{1}{3-p_{2}}}\right]			
\end{align*}

From Property~1 of $M_{\epsilon}$, we obtain that
\begin{equation}
	\label{step1-1}
	\displaystyle b(M_{\epsilon}\nabla u)\rightarrow b(\nabla u)  \text{  in $L^{q_{1}}(\Omega;\mathbb{R}^{n})$}\text{, as $\epsilon\rightarrow0$}.
\end{equation}	

Now, (\ref{First}) follows from (\ref{step1-1}) since $M_{\epsilon}\nabla u\rightarrow \nabla u$ in $L^{p_2}(\Omega;\mathbb{R}^n)$, so
\begin{align*}
	\displaystyle
	&\int_{\Omega}\left(A_{\epsilon}\left(x,P_{\epsilon}\left(x,M_{\epsilon}\nabla u(x)\right)\right),P_{\epsilon}\left(x,M_{\epsilon}\nabla u(x)\right)\right)dx\\
	&\quad = \int_{\Omega}\left(b(M_{\epsilon}\nabla u(x),M_{\epsilon}\nabla u(x)\right)dx\\
	& \qquad\rightarrow \int_{\Omega}\left(b(\nabla u(x)),\nabla u(x)\right)dx\text{, as $\epsilon\rightarrow0$.}
\end{align*}	
\end{proof}

\textbf{STEP~2}
\vspace{1mm}

We now show that
\begin{equation}
	\label{Second}
	\displaystyle \int_{\Omega}\left(A_{\epsilon}\left(x,P_{\epsilon}\left(x,M_{\epsilon}\nabla u(x)\right)\right),\nabla u_{\epsilon}(x)\right)dx
	\rightarrow \int_{\Omega}\left(b(\nabla u(x)),\nabla u(x)\right)dx 
\end{equation}
as $\epsilon\rightarrow0$.

\begin{proof}
Let $\delta>0$.  From Theorem~\ref{regularity} we have $\nabla u\in L^{p_{2}}(\Omega;\mathbb{R}^{n})$ and there exists 
a simple function $\Psi$ satisfying the assumptions of Lemma~\ref{lemma3} such that 
\begin{equation}
	\label{approximation with simple function of Du}
	\displaystyle
	\left\|\nabla u-\Psi\right\|_{L^{p_{2}}(\Omega;\mathbb{R}^{n})}\leq\delta.
\end{equation}

Let us write
\begin{align*}
	\displaystyle &\int_{\Omega}\left(A_{\epsilon}\left(x,P_{\epsilon}\left(x,M_{\epsilon}\nabla u(x)\right)\right),\nabla u_{\epsilon}(x)\right)dx \\
	&\quad= \int_{\Omega}\left(A_{\epsilon}\left(x,P_{\epsilon}\left(x,\Psi\right)\right),\nabla u_{\epsilon}\right)dx \\
	&\qquad+ \int_{\Omega}\left(A_{\epsilon}\left(x,P_{\epsilon}\left(x,M_{\epsilon}\nabla u\right)\right)-A_{\epsilon}\left(x,P_{\epsilon}\left(x,\Psi \right)\right),\nabla u_{\epsilon}\right)dx.
\end{align*}
We first show that $$\int_{\Omega}\left(A_{\epsilon}\left(x,P_{\epsilon}\left(x,\Psi(x)\right)\right),\nabla u_{\epsilon}(x)\right)dx\rightarrow\int_{\Omega}\left(b(\Psi(x)),\nabla u(x)\right)dx \text{ as $\epsilon\rightarrow0$}.$$  
We have $$\int_{\Omega}\left(A_{\epsilon}\left(x,P_{\epsilon}\left(x,\Psi(x)\right)\right),\nabla u_{\epsilon}(x)\right)dx=\sum_{j=0}^{m}\int_{\Omega_{j}}\left(A_{\epsilon}\left(x,P_{\epsilon}\left(x,\eta_{j}\right)\right),\nabla u_{\epsilon}(x)\right)dx.$$
	
Now from (\ref{p5}), we have that $\displaystyle A_{\epsilon}\left(\cdot,P_{\epsilon}\left(\cdot,\eta_{j}\right)\right)\rightharpoonup b(\eta_{j})\in L^{q_{2}}(\Omega_{j};\mathbb{R}^{n}),$ and by (\ref{div with p}), $\displaystyle \int_{\Omega_{j}}\left(A_{\epsilon}\left(x,P_{\epsilon}\left(x,\eta_{j}\right)\right),\nabla\varphi(x)\right)dx=0,$ for $\varphi\in W_{0}^{1,p_{1}}(\Omega_{j})$.

Take $\varphi=\delta u_{\epsilon}$, with $\delta\in C_{0}^{\infty}(\Omega_{j})$ to get $$0=\int_{\Omega_{j}}\left(A_{\epsilon}\left(x,P_{\epsilon}\left(x,\eta_{j}\right)\right),(\nabla\delta)u_{\epsilon}\right)dx + \int_{\Omega_{j}}\left(A_{\epsilon}\left(x,P_{\epsilon}\left(x,\eta_{j}\right)\right),(\nabla u_{\epsilon})\delta\right)dx.$$
	
Taking the limit as $\epsilon\rightarrow0$, and using the fact that $u^{\epsilon}\rightharpoonup u$ in $W_{0}^{1,p_{1}}(\Omega)$ 
and (\ref{p5}), we have by Lemma~\ref{dunfordpettis} that $$\int_{\Omega_{j}}g_{j}(x)\delta(x)dx =\lim_{\epsilon\rightarrow0}\int_{\Omega_{j}}\left(A_{\epsilon}\left(x,P_{\epsilon}\left(x,\eta_{j}\right)\right),(\nabla u_{\epsilon})\delta\right)dx=\int_{\Omega_{j}}\left(b(\eta_{j}),(\nabla u)\delta\right)dx$$

Therefore, we may conclude that $g_{j}=\left(b(\eta_{j}),\nabla u\right)$, so $$\sum_{j=0}^{n}\int_{\Omega_{j}}\left(A_{\epsilon}\left(x,P_{\epsilon}\left(x,\eta_{j}\right)\right),\nabla u_{\epsilon}(x)\right)dx\rightarrow\sum_{j=0}^{n}\int_{\Omega_{j}}\left(b(\eta_{j}),\nabla u(x)\right)dx\text{, as $\epsilon\rightarrow0$.}$$ 

Thus, we get $$\int_{\Omega}\left(A_{\epsilon}\left(x,P_{\epsilon}\left(x,\Psi(x)\right)\right),\nabla u_{\epsilon}(x)\right)dx\rightarrow\int_{\Omega}\left(b(\Psi(x)),\nabla u(x)\right)dx\text{, as $\epsilon\rightarrow0$.}$$ 

On the other hand, let us estimate $$\int_{\Omega}\left(A_{\epsilon}\left(x,P_{\epsilon}\left(x,M_{\epsilon}\nabla u(x)\right)\right)-A_{\epsilon}\left(x,P_{\epsilon}\left(x,\Psi(x)\right)\right),\nabla u_{\epsilon}(x)\right)dx.$$  
	
By (\ref{ConA}), H\"older's inequality, and (\ref{aprioribound}) we obtain
\begin{align}
	\label{step2-1}
	\displaystyle 
&\left|\int_{\Omega}\left(A_{\epsilon}\left(x,P_{\epsilon}\left(x,M_{\epsilon}\nabla u(x)\right)\right)-A_{\epsilon}\left(x,P_{\epsilon}\left(x,\Psi(x) \right)\right),\nabla u_{\epsilon}(x)\right)dx\right|\\
	&\quad\leq C\left[\left(\int_{\Omega}\chi_{1}^{\epsilon}(x)\left|P_{\epsilon}\left(x,M_{\epsilon}\nabla u\right)-P_{\epsilon}\left(x,\Psi \right)\right|^{p_{1}}dx\right)^{\frac{p_{1}-1}{p_{1}}}\right.\notag\\
&\qquad\left.+\left(\int_{\Omega}\chi_{2}^{\epsilon}(x)\left|P_{\epsilon}\left(x,M_{\epsilon}\nabla u\right)-P_{\epsilon}\left(x,\Psi \right)\right|^{p_{2}}dx\right)^{\frac{p_{2}-1}{p_{2}}}\right]\notag
\end{align}

Applying Lemma~\ref{lemma3} and (\ref{approximation with simple function of Du}) to (\ref{step2-1}), we discover that	
\begin{align}
	\label{step2-3}
	\displaystyle & \limsup_{\epsilon\rightarrow0}\left|\int_{\Omega}\left(A_{\epsilon}\left(x,P_{\epsilon}\left(x,M_{\epsilon}\nabla u(x)\right)\right)-A_{\epsilon}\left(x,P_{\epsilon}\left(x,\Psi(x)\right)\right),\nabla u_{\epsilon}(x)\right)dx\right|\\
	&\quad \leq C\sum_{i=1}^{2}\left[\delta^{\frac{p_{1}}{3-p_{1}}}+\delta^{\frac{p_{1}p_{2}}{2p_{2}-p_{2}p_{1}+p_{1}}}+\delta^{\frac{p_{1}p_{2}}{2p_{1}-p_{2}p_{1}+p_{2}}}+\delta^{\frac{p_{2}}{3-p_{2}}}\right]^{\frac{p_{i}-1}{p_{i}}},\notag
\end{align}
where $C$ is independent of $\delta$.  Since $\delta$ is arbitrary we conclude that the limit on the left hand side of (\ref{step2-3}) is equal to $0$.

Finally, using the continuity of $b$ (\ref{Conb}) in Lemma~\ref{monconb}, Theorem~\ref{regularity}, and H\"older's inequality, we obtain $$\left|\int_{\Omega}\left(b(\nabla u(x))-b(\Psi(x)),\nabla u(x)\right)dx\right|\leq C\left[\delta^{\frac{p_{2}(p_{1}-1)}{(p_{2}-1)(3-p_{1})}}+\delta^{\frac{p_{2}}{3-p_{2}}}\right]^{\frac{1}{q_{1}}},$$
where $C$ does not depend on $\delta$. 

Step~2 is proved noticing that $\delta$ can be taken arbitrarily small.
\end{proof}

\textbf{STEP~3}
\vspace{1mm}
	
We will show that
\begin{equation}
\label{Third}
	\displaystyle\int_{\Omega}\left(A_{\epsilon}\left(x,\nabla u_{\epsilon}(x)\right),P_{\epsilon}\left(x,M_{\epsilon}\nabla u(x)\right)\right)dx
	 \rightarrow\int_{\Omega}\left(b(\nabla u(x)),\nabla u(x)\right)dx
\end{equation}
as $\epsilon\rightarrow0$.	

\begin{proof}
Let $\delta>0$.  As in the proof of Step~2, assume $\Psi$ is a simple function satisfying assumptions of 
Lemma~\ref{lemma3} and such that $\displaystyle \left\|\nabla u-\Psi\right\|_{L^{p_{2}}(\Omega;\mathbb{R}^{n})}<\delta$.
		
Let us write 
\begin{align*}
	\displaystyle
	&\int_{\Omega}\left(A_{\epsilon}\left(x,\nabla u_{\epsilon}(x)\right),P_{\epsilon}\left(x,M_{\epsilon}\nabla u(x)\right)\right)dx\\
	&\quad=\int_{\Omega}\left(A_{\epsilon}\left(x,\nabla u_{\epsilon}(x)\right),P_{\epsilon}\left(x,\Psi(x)\right)\right)dx\\
	&\qquad+\int_{\Omega}\left(A_{\epsilon}\left(x,\nabla u_{\epsilon}(x)\right),P_{\epsilon}\left(x,M_{\epsilon}\nabla u(x)\right)-P_{\epsilon}\left(x,\Psi(x)\right)\right)dx.	
\end{align*}
	
We first show that $$\int_{\Omega}\left(A_{\epsilon}\left(x,\nabla u_{\epsilon}(x)\right),P_{\epsilon}\left(x,\Psi(x)\right)\right)dx\rightarrow\int_{\Omega}\left(b\left(\nabla u(x)\right),\Psi(x)\right)dx.$$

We start by writing $$\int_{\Omega}\left(A_{\epsilon}\left(x,\nabla u_{\epsilon}(x)\right),P_{\epsilon}\left(x,\Psi(x)\right)\right)dx= \sum_{j=0}^{m}\int_{\Omega_{j}}\left(A_{\epsilon}\left(x,\nabla u_{\epsilon}(x)\right),P_{\epsilon}\left(x,\eta_{j}\right)\right)dx.$$

From Lemma~\ref{dunfordpettis}, up to a subsequence, $\left(A_{\epsilon}\left(\cdot,\nabla u_{\epsilon}\right),P_{\epsilon}\left(\cdot,\eta_{j}\right)\right)$ converges weakly to a function $h_{j}\in L^{1}(\Omega_{j};\mathbb{R})$, as $\epsilon\rightarrow0$.

By Theorem~\ref{homogenization}, we have $\displaystyle A_{\epsilon}\left(\cdot,\nabla u_{\epsilon}\right)\rightharpoonup b(\nabla u)\in L^{q_{2}}(\Omega;\mathbb{R}^{n})$ and $$\displaystyle -\mbox{div}\left(A_{\epsilon}\left(x,\nabla u_{\epsilon}\right)\right)=f=-\mbox{div}\left(b(\nabla u)\right).$$ 
	
From (\ref{p3}), $p_{\epsilon}$ satisfies $\displaystyle P_{\epsilon}(\cdot,\eta_{j})\rightharpoonup\eta_{j}$ in $L^{p_{1}}(\Omega_{j},\mathbb{R}^{n})$. 
	
Arguing as in Step~2, we find that $\displaystyle \left(A_{\epsilon}\left(x,\nabla u_{\epsilon}(x)\right),P_{\epsilon}\left(x,\eta_{j}\right)\right) \rightharpoonup \left(b(\nabla u(x)),\eta_{j}\right)$ in $D^{'}(\Omega_{j})$, as $\epsilon\rightarrow0$.

Therefore, we may conclude that $h_{j}=\left(b(\nabla u),\eta_{j}\right)$, and hence, $$\sum_{j=0}^{n}\int_{\Omega_{j}}\left(A_{\epsilon}\left(x,\nabla u_{\epsilon}(x)\right),P_{\epsilon}\left(x,\eta_{j}\right)\right)dx\rightarrow\sum_{j=0}^{n}\int_{\Omega_{j}}\left(b(\nabla u(x)),\eta_{j}\right)dx\text{, as $\epsilon\rightarrow0$.}$$ 

Thus, we get $$\displaystyle \int_{\Omega}\left(A_{\epsilon}\left(x,\nabla u_{\epsilon}(x)\right),P_{\epsilon}\left(x,\Psi(x)\right)\right)dx\rightarrow\int_{\Omega}\left(b(\nabla u(x)),\Psi(x)\right)dx\text{, as $\epsilon\rightarrow0.$}$$

Moreover, applying H\"older's inequality, (\ref{ConA}), and (\ref{aprioribound}) we have 
\begin{align*}
	\displaystyle
	& \left|\int_{\Omega}\left(A_{\epsilon}\left(x,\nabla u_{\epsilon}(x)\right),P_{\epsilon}\left(x,M_{\epsilon}\nabla u(x)\right)-P_{\epsilon}\left(x,\Psi(x)\right)\right)dx\right|\\
	&\quad\leq C\sum_{i=1}^{2}\left(\int_{\Omega}\chi_{i}^{\epsilon}(x)\left|P_{\epsilon}\left(x,M_{\epsilon}\nabla u\right)-P_{\epsilon}\left(x,\Psi\right)\right|^{p_{i}}dx\right)^{\frac{1}{p_{i}}}
\end{align*}
	
As in the proof of Step~2 we see that 
\begin{align*}
&\limsup_{\epsilon\rightarrow0}\left|\int_{\Omega}\left(A_{\epsilon}\left(x,\nabla u_{\epsilon}\right),P_{\epsilon}\left(x,M_{\epsilon}\nabla u\right)-P_{\epsilon}\left(x,\Psi\right)\right)dx\right|\\
	&\quad\leq C\sum_{i=1}^{2}\left[\delta^{\frac{p_{1}}{3-p_{1}}}+\delta^{\frac{p_{1}p_{2}}{2p_{2}-p_{2}p_{1}+p_{1}}}+\delta^{\frac{p_{1}p_{2}}{2p_{1}-p_{2}p_{1}+p_{2}}}+\delta^{\frac{p_{2}}{3-p_{2}}}\right]^{\frac{1}{p_{i}}},
\end{align*} 	
where $C$ does not depend on $\delta$.

Hence, proceeding as in Step~2, we find that
\begin{align*}
	\displaystyle				
	& \limsup_{\epsilon\rightarrow0}\left|\int_{\Omega}\left(A_{\epsilon}\left(x,\nabla u_{\epsilon}\right),P_{\epsilon}\left(x,M_{\epsilon} \nabla u\right)\right)dx-\int_{\Omega}\left(b(\nabla u),\nabla u\right)dx\right|\\
	&\leq C\left[\sum_{i=1}^{2}\left(\delta^{\frac{p_{1}}{3-p_{1}}}+\delta^{\frac{p_{1}p_{2}}{2p_{2}-p_{2}p_{1}+p_{1}}}+\delta^{\frac{p_{1}p_{2}}{2p_{1}-p_{2}p_{1}+p_{2}}}+\delta^{\frac{p_{2}}{3-p_{2}}}\right)^{\frac{1}{p_{i}}}+\delta\left\|b(\nabla u)\right\|_{L^{q_{2}}(\Omega,\mathbb{R}^{n})}\right],
\end{align*}
where $C$ is independent of $\delta$.  Now since $\delta$ is arbitrarily small, the proof of Step~3 is complete.
\end{proof}

\textbf{STEP~4}
\vspace{1mm}
	
Finally, let us prove that
\begin{equation}
	\label{Fourth}
	\displaystyle \int_{\Omega}\left(A_{\epsilon}\left(x,\nabla u_{\epsilon}(x)\right),\nabla u_{\epsilon}(x)\right)dx
	 \rightarrow\int_{\Omega}\left(b(\nabla u(x)),\nabla u(x)\right)dx \text{, as $\epsilon\rightarrow0$}.
\end{equation}
	
\begin{proof}
	Since
	\begin{equation}
		\displaystyle \int_{\Omega}\left(A_{\epsilon}\left(x,\nabla u_{\epsilon}\right),\nabla u_{\epsilon}\right)dx=\left\langle -\mbox{div}\left(A_{\epsilon}\left(x,\nabla u_{\epsilon}\right)\right),u_{\epsilon}\right\rangle=\left\langle f,u_{\epsilon}\right\rangle,
	\end{equation}	
	\begin{equation}
		\displaystyle \int_{\Omega}\left(b(\nabla u),\nabla u\right)dx=\left\langle -\mbox{div}\left(b\left(\nabla u\right)\right),u\right\rangle=\left\langle f,u\right\rangle,
	\end{equation}
and $u_{\epsilon}\rightharpoonup u$ in $W^{1,p_{1}}(\Omega)$, the result follows immediately.
\end{proof}	

Finally, Theorem~\ref{corrector} follows from (\ref{First}), (\ref{Second}), (\ref{Third}) and (\ref{Fourth}).
\end{proof}

\subsection{Proof of the Lower Bound on the Amplification of the Macroscopic Field by the Microstructure}
\label{proof fluctuations}

The sequence $\left\{\chi_{i}^{\epsilon}(x)\nabla u_{\epsilon}(x)\right\}_{\epsilon>0}$ has a Young measure 
$\nu^{i}=\left\{\nu_{x}^{i}\right\}_{x\in\Omega}$ associated to it (see Theorem~6.2 and the discussion following 
in \cite{Pedregal1997}), for $i=1,2$.

As a consequence of Theorem~\ref{corrector} proved in the previous section, we have that  $$\left\|\chi_{i}^{\epsilon}(x)P\left(\frac{x}{\epsilon},M_{\epsilon}(\nabla u)(x)\right)- \chi_{i}^{\epsilon}(x)\nabla u_{\epsilon}(x)\right\|_{\textbf{L}^{p_{i}}(\Omega;\mathbb{R}^{n})}\rightarrow0,$$
as $\epsilon\rightarrow0$, which implies that the sequences$$\left\{\chi_{i}^{\epsilon}(x)P\left(\frac{x}{\epsilon},M_{\epsilon}(\nabla u)(x)\right)\right\}_{\epsilon>0} \text{  and  } \left\{\chi_{i}^{\epsilon}(x)\nabla u_{\epsilon}(x)\right\}_{\epsilon>0}$$share the same Young measure (see Lemma~6.3 of \cite{Pedregal1997}), for $i=1,2$.
  
The next lemma identifies the Young measure $\nu^i$.  The lemma is proven for $1<p_1\leq p_2\leq2$; the proof for the case when $1<p_1\leq2\leq p_2$ follows in a similar way.   
\begin{lemma}
\label{lemmafluctuations}
For all $\phi\in C_{0}(\mathbb{R}^{n})$ and for all $\zeta\in C^{\infty}_{0}(\mathbb{R}^{n})$, we have
\begin{equation}
	\label{field-1}
 	\displaystyle
\int_{\Omega}\zeta(x)\int_{\textbf{R}^{n}}\phi(\lambda)d\nu_{x}^{i}(\lambda)dx=\int_{\Omega}\zeta(x)\int_{Y}\phi(\chi_{i}(y)P(y,\nabla u(x)))dydx
\end{equation}
\end{lemma}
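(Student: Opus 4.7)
The plan is to identify $\nu^i$ by replacing the sequence $\{\chi_i^\epsilon \nabla u_\epsilon\}_{\epsilon>0}$ with the more explicit sequence $\{\chi_i^\epsilon(x) P_\epsilon(x, M_\epsilon \nabla u(x))\}_{\epsilon>0}$, which shares the same Young measure by Theorem~\ref{corrector} and Lemma~6.3 of \cite{Pedregal1997}. It thus suffices to prove
\begin{equation*}
\lim_{\epsilon\to 0}\int_\Omega \zeta(x)\,\phi\bigl(\chi_i^\epsilon(x) P_\epsilon(x, M_\epsilon \nabla u(x))\bigr)\,dx = \int_\Omega \zeta(x)\int_Y \phi\bigl(\chi_i(y) P(y,\nabla u(x))\bigr)\,dy\,dx.
\end{equation*}
Using the uniform bound of Lemma~\ref{uniform boundedness of p at M} and density of Lipschitz functions in $C_0(\mathbb{R}^n)$ under uniform convergence, I would first reduce to the case where $\phi$ is bounded and Lipschitz.

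Mirroring Steps~2--3 of the proof of Theorem~\ref{corrector}, for each $\delta>0$ pick a simple function $\Psi=\sum_{j=0}^m \eta_j \chi_{\Omega_j}$ of the form required in Lemma~\ref{lemma3} with $\|\nabla u-\Psi\|_{L^{p_2}(\Omega;\mathbb{R}^n)}<\delta$ (possible by Theorem~\ref{regularity}). On each $\Omega_j$, the integrand $\phi(\chi_i(x/\epsilon)P(x/\epsilon,\eta_j))$ is a bounded, $\epsilon Y$-periodic function of $x$, so by the classical weak-$*$ convergence in $L^\infty$ of oscillating periodic functions to their cell mean,
\begin{equation*}
\int_{\Omega_j}\zeta(x)\,\phi\bigl(\chi_i^\epsilon(x) P_\epsilon(x,\eta_j)\bigr)\,dx \;\longrightarrow\; \int_{\Omega_j}\zeta(x)\int_Y \phi\bigl(\chi_i(y)P(y,\eta_j)\bigr)\,dy\,dx.
\end{equation*}
Summation over $j=0,\dots,m$ yields (\ref{field-1}) with $\Psi$ replacing $\nabla u$ on both sides.

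It remains to control the two errors arising from passing from $\Psi$ back to $\nabla u$. For the $\epsilon$-dependent side, Lipschitz continuity of $\phi$ and H\"older's inequality give
\begin{equation*}
\left|\int_\Omega \zeta(x)\bigl[\phi(\chi_i^\epsilon P_\epsilon(x,M_\epsilon\nabla u))-\phi(\chi_i^\epsilon P_\epsilon(x,\Psi))\bigr]\,dx\right| \leq C\|\zeta\|_\infty \sum_{i=1}^{2}\left(\int_\Omega \chi_i^\epsilon\bigl|P_\epsilon(x,M_\epsilon\nabla u)-P_\epsilon(x,\Psi)\bigr|^{p_i}dx\right)^{1/p_i},
\end{equation*}
which tends to $0$ as $\delta\to 0$, uniformly in $\epsilon$, by Lemma~\ref{lemma3} (together with the a priori bounds of Lemma~\ref{proofaprioribound} and Lemma~\ref{uniform boundedness of p at M}). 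An analogous bound, obtained by integrating Lemma~\ref{lemma1} and Lemma~\ref{lemma2} over $Y$ against $\zeta\,dx$ with the choice $\Psi \approx \nabla u$, handles the error $\int_\Omega\zeta(x)\int_Y\bigl|\phi(\chi_i(y)P(y,\nabla u))-\phi(\chi_i(y)P(y,\Psi))\bigr|dy\,dx$. Letting $\delta\to 0$ then closes the argument.

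The principal obstacle I anticipate is ensuring that the error estimate from Lemma~\ref{lemma3} is genuinely $o(1)$ as $\delta\to 0$ uniformly in $\epsilon$: in the regime $1<p_1\leq p_2\leq 2$, the four fractional-power terms with the "mixed" exponents highlighted in the remark after Lemma~\ref{lemma2} each contribute a positive power of $\delta$ and must be handled together. Fortunately all such exponents are strictly positive, so the $\delta\to 0$ limit is valid; the case $1<p_1\leq 2\leq p_2$ is handled identically, substituting (\ref{lemma3formula2}) for (\ref{lemma3formula}).
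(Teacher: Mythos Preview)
Your approach is correct and reaches the same conclusion, but it takes a different route from the paper. The paper exploits directly the fact that $M_\epsilon\nabla u$ is already piecewise constant on the $\epsilon$-cells $Y_\epsilon^k$: writing $M_\epsilon\nabla u = \xi_\epsilon^k$ on $Y_\epsilon^k$, a change of variables $x=x_\epsilon^k+\epsilon y$ together with a Taylor expansion of $\zeta$ converts the $\epsilon$-integral into a cell integral, and then the only comparison needed is between $P(y,\xi_\epsilon^k)=P(y,M_\epsilon\nabla u(x))$ and $P(y,\nabla u(x))$, which is handled by Lemma~\ref{lemma2} and the strong convergence $M_\epsilon\nabla u\to\nabla u$ in $L^{p_2}$. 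You instead introduce an auxiliary simple function $\Psi$ approximating $\nabla u$, invoke the classical weak-$*$ convergence of periodic functions on each $\Omega_j$, and close with Lemma~\ref{lemma3}; this mirrors Steps~2--3 of the Corrector Theorem but adds a layer (the $\delta$-approximation) that the paper avoids. Your version has the minor advantage of being explicit about the density reduction to Lipschitz $\phi$, which the paper asserts without comment; the paper's version is shorter because it never leaves the $M_\epsilon$ framework and needs only Lemma~\ref{lemma2} rather than Lemma~\ref{lemma3}. One small wording issue: your claim that the Lemma~\ref{lemma3} error vanishes ``uniformly in $\epsilon$'' is not quite what that lemma delivers---it bounds the $\limsup_{\epsilon\to 0}$---but the standard $\limsup$-then-$\delta\to 0$ order (as in the paper's proof of Theorem~\ref{corrector}) suffices.
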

\begin{proof}
To prove (\ref{field-1}), we will show that given $\phi\in C_0(\mathbb{R}^n)$  and $\zeta\in C_0^\infty(\mathbb{R}^n)$,
\begin{align}
	\label{field-2}
&\lim_{\epsilon\rightarrow0}\int_{\Omega}\zeta(x)\phi\left(\chi_{i}^{\epsilon}(x)P\left(\frac{x}{\epsilon},M_{\epsilon}\left(\nabla u\right)(x)\right)\right)dx\notag\\
	 &\quad=\int_{\Omega}\zeta(x)\int_{Y}\phi(\chi_{i}(y)P(y,\nabla u(x)))dydx.
\end{align}
We consider the difference
\begin{align}
	\label{difference}
	\displaystyle 
	& \left|\int_{\Omega}\zeta(x)\phi\left(\chi_{i}\left(\frac{x}{\epsilon}\right)P\left(\frac{x}{\epsilon},M_{\epsilon}(\nabla u)(x)\right)\right)dx - \int_{\Omega}\zeta(x)\int_{Y}\phi\left(\chi_{i}\left(y\right)P\left(y,\nabla u(x)\right)\right)dydx\right|\notag\\
	&\leq\left|\sum_{k\in I_{\epsilon}}\int_{Y_{\epsilon}^{k}}\zeta(x)\phi\left(\chi_{i}\left(\frac{x}{\epsilon}\right)P\left(\frac{x}{\epsilon},\xi_{\epsilon}^{k}\right)\right)dx - \int_{\Omega_{\epsilon}}\zeta(x)\int_{Y}\phi\left(\chi_{i}\left(y\right)P\left(y,\nabla u(x)\right)\right)dydx\right|\notag\\
	&\quad+C\left|\Omega\setminus\Omega_{\epsilon}\right|.
\end{align}
Note that the term $C\left|\Omega\setminus\Omega_{\epsilon}\right|$ goes to $0$, as $\epsilon\rightarrow0$.  Now set $x_{\epsilon}^{k}$ to be the center of $Y_{\epsilon}^{k}$. On the first integral use the change of variables $x=x_{\epsilon}^{k}+\epsilon y$, where $y$ belongs to $Y$, and since $dx=\epsilon^{n}dy$, we get
\begin{align*}
	\displaystyle 
	& \left|\sum_{k\in I_{\epsilon}}\int_{Y_{\epsilon}^{k}}\zeta(x)\phi\left(\chi_{i}\left(\frac{x}{\epsilon}\right)P\left(\frac{x}{\epsilon},\xi_{\epsilon}^{k}\right)\right)dx - \sum_{k\in I_{\epsilon}}\int_{Y_{\epsilon}^{k}}\zeta(x)\int_{Y}\phi\left(\chi_{i}\left(y\right)P\left(y,\nabla u(x)\right)\right)dydx\right|\\
	&=\left|\sum_{k\in I_{\epsilon}}\epsilon^{n}\int_{Y}\zeta(x_{\epsilon}^{k}+\epsilon y)\phi\left(\chi_{i}\left(y\right)P\left(y,\xi_{\epsilon}^{k}\right)\right)dy\right.\\
	&\quad\left. - \sum_{k\in I_{\epsilon}}\int_{Y_{\epsilon}^{k}}\zeta(x)\int_{Y}\phi\left(\chi_{i}\left(y\right)P\left(y,\nabla u(x)\right)\right)dydx\right|
	&\intertext{Applying Taylor's expansion for $\zeta$, we have}
	&\leq \left|\sum_{k\in I_{\epsilon}}\int_{Y_{\epsilon}^{k}}\int_{Y}\left(\zeta(x)+CO(\epsilon)\right)\left[\phi\left(\chi_{i}\left(y\right)P\left(y,\xi_{\epsilon}^{k}\right)\right)- \phi\left(\chi_{i}\left(y\right)P\left(y,\nabla u(x)\right)\right)\right]dydx\right|\\
	&\quad+CO(\epsilon)\\
&\leq\left|\int_{\Omega_{\epsilon}}\left|\zeta(x)\right|\int_{Y}\left|\phi\left(\chi_{i}\left(y\right)P\left(y,M_{\epsilon}\nabla u(x)\right)\right)- \phi\left(\chi_{i}\left(y\right)P\left(y,\nabla u(x)\right)\right)\right|dydx\right|\\
	&\quad+CO(\epsilon)
	&\intertext{Because of the uniform Lipschitz continuity of $\phi$, we get}
	&\leq C\left|\int_{\Omega_{\epsilon}}\left|\zeta(x)\right|\int_{Y}\left|P\left(y,M_{\epsilon}\nabla u(x)\right)- P\left(y,\nabla u(x)\right)\right|dydx\right| + CO(\epsilon)
	&\intertext{By H\"older's inequality twice and Lemma~\ref{lemma2}, we have}	
	&\leq C\left\{\left[\int_{\Omega_{\epsilon}}\left|M_{\epsilon}\nabla u(x)-\nabla u(x)\right|^{\frac{p_{1}}{3-p_{1}}}\right.\right.\\
	&\quad \times \left(1+\left|M_{\epsilon}\nabla u(x)\right|^{p_{1}}+\left|M_{\epsilon} \nabla u(x)\right|^{p_{2}}+\left|\nabla u(x)\right|^{p_{1}}+\left|\nabla u(x)\right|^{p_{2}}\right)^{\frac{2-p_{1}}{3-p_{1}}}dx\\
	&+ \int_{\Omega_{\epsilon}}\left|M_{\epsilon}\nabla u(x)-\nabla u(x)\right|^{\frac{p_{1}p_{2}}{2p_{2}-p_{1}p_{2}+p_{1}}}\\
	&\quad \times\left(1+\left|M_{\epsilon}\nabla u(x)\right|^{p_{1}}+\left|M_{\epsilon}\nabla u(x)\right|^{p_{2}}+\left|\nabla u(x)\right|^{p_{1}}+\left|\nabla u(x)\right|^{p_{2}}\right)^{\frac{p_{2}(2-p_{1})}{2p_{2}-p_{1}p_{2}+p_{1}}}dx\\
	&+ \int_{\Omega_{\epsilon}}\left|M_{\epsilon}\nabla u(x)-\nabla u(x)\right|^{\frac{p_{1}p_{2}}{2p_{1}-p_{1}p_{2}+p_{2}}}\\
	&\quad \times \left(1+\left|M_{\epsilon}\nabla u(x)\right|^{p_{1}}+\left|M_{\epsilon}\nabla u(x)\right|^{p_{2}}+\left|\nabla u(x)\right|^{p_{1}}+\left|\nabla u(x)\right|^{p_{2}}\right)^{\frac{p_{1}(2-p_{2})}{2p_{1}-p_{1}p_{2}+p_{2}}}dx\\	
	&+ \int_{\Omega_{\epsilon}}\left|M_{\epsilon}\nabla u(x)-\nabla u(x)\right|^{\frac{p_{2}}{3-p_{2}}}\\
	&\quad \times \left.\left(1+\left|M_{\epsilon}\nabla u(x)\right|^{p_{1}}+\left|M_{\epsilon}\nabla u(x)\right|^{p_{2}}+\left|\nabla u(x)\right|^{p_{1}}+\left|\nabla u(x)\right|^{p_{2}}\right)^{\frac{2-p_{2}}{3-p_{2}}}dx\right]^{1/p_{1}}\\
	&+ \left[\int_{\Omega_{\epsilon}}\left|M_{\epsilon}\nabla u(x)-\nabla u(x)\right|^{\frac{p_{1}}{3-p_{1}}}\right.\\
	&\quad \times \left(1+\left|M_{\epsilon}\nabla u(x)\right|^{p_{1}}+\left|M_{\epsilon} \nabla u(x)\right|^{p_{2}}+\left|\nabla u(x)\right|^{p_{1}}+\left|\nabla u(x)\right|^{p_{2}}\right)^{\frac{2-p_{1}}{3-p_{1}}}dx\\
	&+ \int_{\Omega_{\epsilon}}\left|M_{\epsilon}\nabla u(x)-\nabla u(x)\right|^{\frac{p_{1}p_{2}}{2p_{2}-p_{1}p_{2}+p_{1}}}\\
	&\quad \times\left(1+\left|M_{\epsilon}\nabla u(x)\right|^{p_{1}}+\left|M_{\epsilon}\nabla u(x)\right|^{p_{2}}+\left|\nabla u(x)\right|^{p_{1}}+\left|\nabla u(x)\right|^{p_{2}}\right)^{\frac{p_{2}(2-p_{1})}{2p_{2}-p_{1}p_{2}+p_{1}}}dx\\
	&+ \int_{\Omega_{\epsilon}}\left|M_{\epsilon}\nabla u(x)-\nabla u(x)\right|^{\frac{p_{1}p_{2}}{2p_{1}-p_{1}p_{2}+p_{2}}}\\
	&\quad \times \left(1+\left|M_{\epsilon}\nabla u(x)\right|^{p_{1}}+\left|M_{\epsilon}\nabla u(x)\right|^{p_{2}}+\left|\nabla u(x)\right|^{p_{1}}+\left|\nabla u(x)\right|^{p_{2}}\right)^{\frac{p_{1}(2-p_{2})}{2p_{1}-p_{1}p_{2}+p_{2}}}dx\\	
	&+ \int_{\Omega_{\epsilon}}\left|M_{\epsilon}\nabla u(x)-\nabla u(x)\right|^{\frac{p_{2}}{3-p_{2}}}\\
	&\quad \times \left.\left.\left(1+\left|M_{\epsilon}\nabla u(x)\right|^{p_{1}}+\left|M_{\epsilon}\nabla u(x)\right|^{p_{2}}+\left|\nabla u(x)\right|^{p_{1}}+\left|\nabla u(x)\right|^{p_{2}}\right)^{\frac{2-p_{2}}{3-p_{2}}}dx\right]^{1/p_{2}}\right\}\\
	&\qquad + CO(\epsilon).
	&\intertext{Applying H\"older's inequality, Jensen's inequality and Theorem~\ref{regularity}, we get} 
	&\leq C\left\{\left[\left(\int_{\Omega_{\epsilon}}\left|M_{\epsilon}\nabla u(x)-\nabla u(x)\right|^{p_{1}}dx\right)^{\frac{1}{3-p_{1}}}+ \left(\int_{\Omega_{\epsilon}}\left|M_{\epsilon}\nabla u(x)-\nabla u(x)\right|^{p_{2}}dx\right)^{\frac{p_{1}}{2p_{2}-p_{1}p_{2}+p_{1}}}\right.\right.\\
	&\quad \left.+ \left(\int_{\Omega_{\epsilon}}\left|M_{\epsilon}\nabla u(x)-\nabla u(x)\right|^{p_{1}}dx\right)^{\frac{p_{2}}{2p_{1}-p_{1}p_{2}+p_{2}}}\left(\int_{\Omega_{\epsilon}}\left|M_{\epsilon}\nabla u(x)-\nabla u(x)\right|^{p_{2}}dx\right)^{\frac{1}{3-p_{2}}}\right]^{1/p_{1}}\\
	&+ \left[\left(\int_{\Omega_{\epsilon}}\left|M_{\epsilon}\nabla u(x)-\nabla u(x)\right|^{p_{1}}dx\right)^{\frac{1}{3-p_{1}}}+ \left(\int_{\Omega_{\epsilon}}\left|M_{\epsilon}\nabla u(x)-\nabla u(x)\right|^{p_{2}}dx\right)^{\frac{p_{1}}{2p_{2}-p_{1}p_{2}+p_{1}}}\right.
	\intertext{}
	&\quad \left.\left.+ \left(\int_{\Omega_{\epsilon}}\left|M_{\epsilon}\nabla u(x)-\nabla u(x)\right|^{p_{1}}dx\right)^{\frac{p_{2}}{2p_{1}-p_{1}p_{2}+p_{2}}}+ \left(\int_{\Omega_{\epsilon}}\left|M_{\epsilon}\nabla u(x)-\nabla u(x)\right|^{p_{2}}dx\right)^{\frac{1}{3-p_{2}}}\right]^{1/p_{2}}\right\}\\
	&\qquad+ CO(\epsilon).
\end{align*}

Finally, from the approximation property of $M_{\epsilon}$ in Section~\ref{CorrectorSection}, as $\epsilon\rightarrow0$, we obtain (\ref{field-2}).
\end{proof}

\begin{lemma}(See Theorem~6.11 in \cite{Pedregal1997})
\label{pedregal}
The sequence $\left\{\chi_{i}^{\epsilon}(x)\nabla u_{\epsilon}(x)\right\}_{\epsilon>0}$ and the Young measure 
$\nu^{i}=\left\{\nu_{x}^{i}\right\}_{x\in\Omega}$ associated to it satisfy
$$\int_{D}\int_{\mathbb{R}^{n}}\psi\left(x,\lambda\right)d\nu_{x}^{i}(\lambda)dx\leq\liminf_{\epsilon\rightarrow0}\int_{D}\psi\left(x,\chi_{i}^{\epsilon}(x)\nabla u_{\epsilon}(x)\right)dx,$$ for all Carath\'{e}odory functions $\psi\geq0$ and measurable subset $D\subset\Omega$.
\end{lemma}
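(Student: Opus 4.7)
The plan is to verify that the hypotheses of Theorem~6.11 in \cite{Pedregal1997} are satisfied by the sequence $\{\chi_i^{\epsilon}(x)\nabla u_{\epsilon}(x)\}_{\epsilon>0}$, after which the claimed inequality is a direct instance of that theorem. There is essentially no original content beyond hypothesis-checking.

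First I would record the existence of the Young measure: by the discussion at the start of Section~\ref{proof fluctuations} (invoking Theorem~6.2 of \cite{Pedregal1997}), the sequence $\{\chi_i^{\epsilon}\nabla u_{\epsilon}\}_{\epsilon>0}$ generates a Young measure $\nu^{i}=\{\nu_x^{i}\}_{x\in\Omega}$. Tightness of the sequence, which is needed to rule out mass escaping to infinity when integrating against $\psi$, follows immediately from the a priori bound~(\ref{aprioribound}): $\chi_1^{\epsilon}\nabla u_{\epsilon}$ is uniformly bounded in $L^{p_1}(\Omega;\mathbb{R}^n)$ and $\chi_2^{\epsilon}\nabla u_{\epsilon}$ is uniformly bounded in $L^{p_2}(\Omega;\mathbb{R}^n)$, so in particular the sequence is equi-integrable in $L^1$ on bounded subsets and satisfies the standard coercivity condition used in Pedregal's framework.

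Next, given any non-negative Carath\'eodory $\psi:\Omega\times\mathbb{R}^n\to\mathbb{R}$ and any measurable $D\subset\Omega$, the compositions $\psi(\cdot,\chi_i^{\epsilon}(\cdot)\nabla u_{\epsilon}(\cdot))$ are non-negative measurable functions on $D$. Pedregal's Theorem~6.11 then asserts precisely the Fatou-type lower bound
$$\int_{D}\int_{\mathbb{R}^{n}}\psi(x,\lambda)\,d\nu_x^{i}(\lambda)\,dx\leq\liminf_{\epsilon\rightarrow0}\int_{D}\psi(x,\chi_i^{\epsilon}(x)\nabla u_{\epsilon}(x))\,dx,$$
which is exactly the desired inequality. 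In carrying this out I would also make explicit that the Young measure $\nu^i$ associated to the sequence is the same measure that Pedregal's theorem refers to; the identification of $\nu^i$ (performed later via Lemma~\ref{lemmafluctuations} using the corrector representation) is not needed for the present lower bound, only the existence and the defining property of the Young measure are used.

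The only potential obstacle is the verification of the technical integrability/tightness hypothesis in the precise form required by Pedregal's theorem, but this reduces to the uniform $L^{p_i}$ bound already established in Lemma~\ref{proofaprioribound} via~(\ref{aprioribound}). Hence the lemma reduces to invoking \cite[Theorem~6.11]{Pedregal1997} with these ingredients in place, and no further argument is needed.
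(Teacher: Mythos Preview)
Your proposal is correct and matches the paper's treatment: the paper does not give a proof of this lemma at all, simply citing Theorem~6.11 of \cite{Pedregal1997}, and your write-up merely spells out the hypothesis-checking (existence of the Young measure and the uniform $L^{p_i}$ bounds from~(\ref{aprioribound})) before invoking that same theorem.
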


Using Lemma~\ref{lemmafluctuations} and Lemma~\ref{pedregal}, we obtain
\begin{align*}
 	\displaystyle
\lim_{\epsilon\rightarrow0}\int_{\Omega}\zeta(x)\phi\left(\chi_{i}^{\epsilon}(x)P\left(\frac{x}{\epsilon},M_{\epsilon}(\nabla u)(x)\right)\right)dx&=\int_{\Omega}\zeta(x)\int_{Y}\phi(\chi_{i}(y)P(y,\nabla u(x)))dydx\\  	
& =\int_{\Omega}\zeta(x)\int_{\mathbb{R}^{n}}\phi(\lambda)d\nu_{x}^{i}(\lambda)dx\\
 	& \leq \lim_{\epsilon\rightarrow0}\int_{\Omega}\zeta(x)\phi\left(\chi_{i}^{\epsilon}(x)\nabla u_{\epsilon}(x)\right)dx, 
\end{align*}
for all $\phi\in C_{0}(\mathbb{R}^{n})$ and for all $\zeta\in C^{\infty}_{0}(\mathbb{R}^{n})$, which concludes the proof of Theorem~\ref{fluctuations}.

\section{Appendix}
The proofs presented here are for the case when $1<p_{1}\leq p_{2}\leq2$.  The proofs for the lemmas for the case when $1<p_{1}\leq2\leq p_{2}$ follow in a similar way.  The letter $C$ will represent a generic positive constant independent of $\epsilon$, and it can take different values from one line to the other.
 
\subsection{Proof of Lemma~\ref{lemma1}}
Let $\xi\in\mathbb{R}^{n}$.  By (\ref{MonA}) we have that 
\begin{equation*}
\left(A(y,P(y,\xi)),P(y,\xi)\right)\geq C\left(\chi_{1}(y)\left|P(y,\xi)\right|^{p_{1}} + \chi_{2}(y)\left|P(y,\xi)\right|^{p_{2}}\right).
\end{equation*} 
Integrating both sides over $Y$, using (\ref{ConA}), and Young's inequality, we get
\begin{align*}
\displaystyle
	&\int_{Y}\chi_{1}(y)\left|P(y,\xi)\right|^{p_{1}}dy + \int_{Y}\chi_{2}(y)\left|P(y,\xi)\right|^{p_{2}}dy\\
	&\quad\leq C\left[\frac{\delta^{q_{2}}\displaystyle\int_{Y}\chi_{1}(y)\left|P(y,\xi)\right|^{p_{1}}dy}{q_{2}} + \frac{\delta^{-p_{1}}\displaystyle\int_{Y}\chi_{1}(y)\left|\xi\right|^{p_{1}}dy}{p_{1}}\right.\\
&\qquad\left.+\frac{\delta^{q_{1}}\displaystyle\int_{Y}\chi_{2}(y)\left|P(y,\xi)\right|^{p_{2}}dy}{q_{1}} + \frac{\delta^{-p_{2}}\displaystyle\int_{Y}\chi_{2}(y)\left|\xi\right|^{p_{2}}dy}{p_{2}}\right]
\end{align*}
		
Doing some algebraic manipulations, we obtain 
\begin{align*}
	\displaystyle
\left(1-C\delta^{q_{2}}\right)\int_{Y}\chi_{1}(y)\left|P(y,\xi)\right|^{p_{1}}dy &+ \left(1-C\delta^{q_{1}}\right)\int_{Y}\chi_{2}(y)\left|P(y,\xi)\right|^{p_{2}}dy\\
	&\quad\leq C\left[\left(\delta^{-p_{1}}\left|\xi\right|^{p_{1}}\theta_{1}+\delta^{-p_{2}}\left|\xi\right|^{p_{2}}\theta_{2}\right)\right]
\end{align*}
On choosing an appropiate $\delta$, we finally obtain (\ref{Lemma 1}).

\subsection{Proof of Lemma~\ref{lemma2}}
Using H\"older's inequality, we have
\begin{align*}
	\displaystyle
	& \int_{Y}\chi_{1}(y)\left|P(y,\xi_{1})-P(y,\xi_{2})\right|^{p_{1}}dy+\int_{Y}\chi_{2}(y)\left|P(y,\xi_{1})-P(y,\xi_{2})\right|^{p_{2}}dy\\
	&\leq C\sum_{i=1}^{2}\left[\left(\int_{Y}\frac{\chi_{i}(y)\left|P(y,\xi_{1})-P(y,\xi_{2})\right|^{2}}{\left(1+\left|P(y,\xi_{1})\right|+\left|P(y,\xi_{2})\right|\right)^{2-p_{i}}}dy\right)^{\frac{p_{i}}{2}}\right.\\
&\quad\times\left.\left(\int_{Y}\chi_{i}(y)\left(1+\left|P(y,\xi_{1})\right|+\left|P(y,\xi_{2})\right|\right)^{p_{i}}dy\right)^{\frac{2-p_{i}}{2}}\right]
	&\intertext{By Lemma~\ref{lemma1}, (\ref{MonA}), and (\ref{cell}) we get}
	& \leq C\sum_{i=1}^{2}\left[\left(\int_{Y}\left|A(y,P(y,\xi_{1}))-A(y,P(y,\xi_{2}))\right|\left|\xi_{1}-\xi_{2}\right|dy\right)^{\frac{p_{i}}{2}}\right.\\
	&\qquad\times\left.		\left(1+\left|\xi_{1}\right|^{p_{1}}\theta_{1}+\left|\xi_{2}\right|^{p_{1}}\theta_{1}+\left|\xi_{1}\right|^{p_{2}}\theta_{2}+\left|\xi_{2}\right|^{p_{2}}\theta_{2}\right)^{\frac{2-p_{i}}{2}}\right]
	&\intertext{By (\ref{ConA}) and H\"older's inequality, we have}	
	&\leq C\left[\left(\int_{Y}\chi_{1}(y)\left|P(y,\xi_{1})-P(y,\xi_{2})\right|^{p_{1}}dy\right)^{\frac{p_{1}-1}{2}}\theta_{1}^{\frac{1}{2}}\left|\xi_{1}-\xi_{2}\right|^{\frac{p_{1}}{2}}\right.\\
	&\qquad\times		\left(1+\left|\xi_{1}\right|^{p_{1}}\theta_{1}+\left|\xi_{2}\right|^{p_{1}}\theta_{1}+\left|\xi_{1}\right|^{p_{2}}\theta_{2}+\left|\xi_{2}\right|^{p_{2}}\theta_{2}\right)^{\frac{2-p_{1}}{2}}\\	
	&\quad+ \left(\int_{Y}\chi_{2}(y)\left|P(y,\xi_{1})-P(y,\xi_{2})\right|^{p_{2}}dy\right)^{\frac{p_{1}(p_{2}-1)}{2p_{2}}}\theta_{2}^{\frac{p_{1}}{2p_{2}}}\left|\xi_{1}-\xi_{2}\right|^{\frac{p_{1}}{2}}\\
	&\qquad\times		\left(1+\left|\xi_{1}\right|^{p_{1}}\theta_{1}+\left|\xi_{2}\right|^{p_{1}}\theta_{1}+\left|\xi_{1}\right|^{p_{2}}\theta_{2}+\left|\xi_{2}\right|^{p_{2}}\theta_{2}\right)^{\frac{2-p_{1}}{2}}\\
	&\quad+ \left(\int_{Y}\chi_{1}(y)\left|P(y,\xi_{1})-P(y,\xi_{2})\right|^{p_{1}}dy\right)^{\frac{p_{2}(p_{1}-1)}{2p_{1}}}\theta_{1}^{\frac{p_{2}}{2p_{1}}}\left|\xi_{1}-\xi_{2}\right|^{\frac{p_{2}}{2}}\\
	&\qquad\times		\left(1+\left|\xi_{1}\right|^{p_{1}}\theta_{1}+\left|\xi_{2}\right|^{p_{1}}\theta_{1}+\left|\xi_{1}\right|^{p_{2}}\theta_{2}+\left|\xi_{2}\right|^{p_{2}}\theta_{2}\right)^{\frac{2-p_{2}}{2}}\\
	&\quad+ \left(\int_{Y}\chi_{2}(y)\left|P(y,\xi_{1})-P(y,\xi_{2})\right|^{p_{2}}dy\right)^{\frac{p_{2}-1}{2}}\theta_{2}^{\frac{1}{2}}\left|\xi_{1}-\xi_{2}\right|^{\frac{p_{2}}{2}}\\
	&\left.\qquad\times		\left(1+\left|\xi_{1}\right|^{p_{1}}\theta_{1}+\left|\xi_{2}\right|^{p_{1}}\theta_{1}+\left|\xi_{1}\right|^{p_{2}}\theta_{2}+\left|\xi_{2}\right|^{p_{2}}\theta_{2}\right)^{\frac{2-p_{2}}{2}}\right]
	&\intertext{Applying Young's inequality, with $r_{1}=\frac{2}{p_{1}-1}$, $r_{2}=\frac{2p_{2}}{p_{1}(p_{2}-1)}$, $r_{3}=\frac{2p_{1}}{p_{2}(p_{1}-1)}$ , and $r_{4}=\frac{2}{p_{2}-1}$.}   
	&\leq C\left[\frac{\delta^{\frac{2}{p_{1}-1}}\displaystyle\int_{Y}\chi_{1}(y)\left|P(y,\xi_{1})-P(y,\xi_{2})\right|^{p_{1}}dy}{\frac{2}{p_{1}-1}}\right.\\
	&\quad+ \frac{\delta^{\frac{-2}{3-p_{1}}}\theta_{1}^{\frac{1}{3-p_{1}}}\left|\xi_{1}-\xi_{2}\right|^{\frac{p_{1}}{3-p_{1}}}\left(1+\left|\xi_{1}\right|^{p_{1}}\theta_{1}+\left|\xi_{2}\right|^{p_{1}}\theta_{1}+\left|\xi_{1}\right|^{p_{2}}\theta_{2}+\left|\xi_{2}\right|^{p_{2}}\theta_{2}\right)^{\frac{2-p_{1}}{3-p_{1}}}}{\frac{2}{3-p_{1}}}\\	&+\frac{\delta^{\frac{2p_{2}}{p_{1}(p_{2}-1)}}\displaystyle\int_{Y}\chi_{2}(y)\left|P(y,\xi_{1})-P(y,\xi_{2})\right|^{p_{2}}dy}{\frac{2p_{2}}{p_{1}(p_{2}-1)}}\\
	&\quad+ \left(\frac{\delta^{\frac{-2p_{2}}{2p_{2}-p_{1}p_{2}+p_{1}}}\theta_{2}^{\frac{p_{1}}{2p_{2}-p_{1}p_{2}+p_{1}}}\left|\xi_{1}-\xi_{2}\right|^{\frac{p_{1}p_{2}}{2p_{2}-p_{1}p_{2}+p_{1}}}}{\frac{2p_{2}}{2p_{2}-p_{1}p_{2}+p_{1}}}\right)\\
	&\qquad\times \left(1+\left|\xi_{1}\right|^{p_{1}}\theta_{1}+\left|\xi_{2}\right|^{p_{1}}\theta_{1}+\left|\xi_{1}\right|^{p_{2}}\theta_{2}+\left|\xi_{2}\right|^{p_{2}}\theta_{2}\right)^{\frac{p_{2}(2-p_{1})}{2p_{2}-p_{1}p_{2}+p_{1}}}\\
	&\quad+ \frac{\delta^{\frac{2p_{1}}{p_{2}(p_{1}-1)}}\displaystyle\int_{Y}\chi_{1}(y)\left|P(y,\xi_{1})-P(y,\xi_{2})\right|^{p_{1}}dy}{\frac{2p_{1}}{p_{2}(p_{1}-1)}}\\
	&\quad+ \left(\frac{\delta^{\frac{-2p_{1}}{2p_{1}-p_{1}p_{2}+p_{2}}}\theta_{1}^{\frac{p_{2}}{2p_{1}-p_{1}p_{2}+p_{2}}}\left|\xi_{1}-\xi_{2}\right|^{\frac{p_{1}p_{2}}{2p_{1}-p_{1}p_{2}+p_{2}}}}{\frac{2p_{1}}{2p_{1}-p_{1}p_{2}+p_{2}}}\right)\\
	&\qquad\times	\left(1+\left|\xi_{1}\right|^{p_{1}}\theta_{1}+\left|\xi_{2}\right|^{p_{1}}\theta_{1}+\left|\xi_{1}\right|^{p_{2}}\theta_{2}+\left|\xi_{2}\right|^{p_{2}}\theta_{2}\right)^{\frac{p_{1}(2-p_{2})}{2p_{1}-p_{1}p_{2}+p_{2}}}\\	
&+\frac{\delta^{\frac{2}{p_{2}-1}}\displaystyle\int_{Y}\chi_{2}(y)\left|P(y,\xi_{1})-P(y,\xi_{2})\right|^{p_{2}}dy}{\frac{2}{p_{2}-1}}+\\
	&\quad\left. \frac{\delta^{\frac{-2}{3-p_{2}}}\theta_{2}^{\frac{1}{3-p_{2}}}\left|\xi_{1}-\xi_{2}\right|^{\frac{p_{2}}{3-p_{2}}}\left(1+\left|\xi_{1}\right|^{p_{1}}\theta_{1}+\left|\xi_{2}\right|^{p_{1}}\theta_{1}+\left|\xi_{1}\right|^{p_{2}}\theta_{2}+\left|\xi_{2}\right|^{p_{2}}\theta_{2}\right)^{\frac{2-p_{2}}{3-p_{2}}}}{\frac{2}{3-p_{2}}}\right]
\end{align*}
Straightforward algebraic manipulation delivers 
\begin{align*}
\displaystyle
	& \left(1-\frac{C\delta^{\frac{2}{p_{1}-1}}}{\frac{2}{p_{1}-1}}-\frac{C\delta^{\frac{2p_{1}}{p_{2}(p_{1}-1)}}}{\frac{2p_{1}}{p_{2}(p_{1}-1)}}\right)\int_{Y}\chi_{1}(y)\left|P(y,\xi_{1})-P(y,\xi_{2})\right|^{p_{1}}dy\\
&\quad+\left(1-\frac{C\delta^{\frac{2}{p_{2}-1}}}{\frac{2}{p_{2}-1}}-\frac{C\delta^{\frac{2p_{2}}{p_{1}(p_{2}-1)}}}{\frac{2p_{2}}{p_{1}(p_{2}-1)}}\right)\int_{Y}\chi_{2}(y)\left|P(y,\xi_{1})-P(y,\xi_{2})\right|^{p_{2}}dy\\
	&\leq C\left[ \frac{\delta^{\frac{-2}{3-p_{1}}}\theta_{1}^{\frac{1}{3-p_{1}}}\left|\xi_{1}-\xi_{2}\right|^{\frac{p_{1}}{3-p_{1}}}\left(1+\left|\xi_{1}\right|^{p_{1}}\theta_{1}+\left|\xi_{2}\right|^{p_{1}}\theta_{1}+\left|\xi_{1}\right|^{p_{2}}\theta_{2}+\left|\xi_{2}\right|^{p_{2}}\theta_{2}\right)^{\frac{2-p_{1}}{3-p_{1}}}}{\frac{2}{3-p_{1}}}\right.\\
	&\quad+  \left(\frac{\delta^{\frac{-2p_{2}}{2p_{2}-p_{1}p_{2}+p_{1}}}\theta_{2}^{\frac{p_{1}}{2p_{2}-p_{1}p_{2}+p_{1}}}\left|\xi_{1}-\xi_{2}\right|^{\frac{p_{1}p_{2}}{2p_{2}-p_{1}p_{2}+p_{1}}}}{\frac{2p_{2}}{2p_{2}-p_{1}p_{2}+p_{1}}}\right)
	&\intertext{}
	&\qquad\times	\left(1+\left|\xi_{1}\right|^{p_{1}}\theta_{1}+\left|\xi_{2}\right|^{p_{1}}\theta_{1}+\left|\xi_{1}\right|^{p_{2}}\theta_{2}+\left|\xi_{2}\right|^{p_{2}}\theta_{2}\right)^{\frac{p_{2}(2-p_{1})}{2p_{2}-p_{1}p_{2}+p_{1}}}\\
	&\quad+ \left(\frac{\delta^{\frac{-2p_{1}}{2p_{1}-p_{1}p_{2}+p_{2}}}\theta_{1}^{\frac{p_{2}}{2p_{1}-p_{1}p_{2}+p_{2}}}\left|\xi_{1}-\xi_{2}\right|^{\frac{p_{1}p_{2}}{2p_{1}-p_{1}p_{2}+p_{2}}}}{\frac{2p_{1}}{2p_{1}-p_{1}p_{2}+p_{2}}}\right)\\		
&\qquad\times\left(1+\left|\xi_{1}\right|^{p_{1}}\theta_{1}+\left|\xi_{2}\right|^{p_{1}}\theta_{1}+\left|\xi_{1}\right|^{p_{2}}\theta_{2}+\left|\xi_{2}\right|^{p_{2}}\theta_{2}\right)^{\frac{p_{1}(2-p_{2})}{2p_{1}-p_{1}p_{2}+p_{2}}}\\
	&\quad\left.+ \frac{\delta^{\frac{-2}{3-p_{2}}}\theta_{2}^{\frac{1}{3-p_{2}}}\left|\xi_{1}-\xi_{2}\right|^{\frac{p_{2}}{3-p_{2}}}\left(1+\left|\xi_{1}\right|^{p_{1}}\theta_{1}+\left|\xi_{2}\right|^{p_{1}}\theta_{1}+\left|\xi_{1}\right|^{p_{2}}\theta_{2}+\left|\xi_{2}\right|^{p_{2}}\theta_{2}\right)^{\frac{2-p_{2}}{3-p_{2}}}}{\frac{2}{3-p_{2}}}\right]
\end{align*}	
The result follows on choosing an appropriate $\delta$ and doing a change of variables. 
\subsection{Proof of Lemma~\ref{lemma3}}
Let $\Psi$ of the form (\ref{Psi}).  For every $\epsilon>0$, let us denote by 
$\displaystyle\Omega_{\epsilon}=\bigcup_{k\in I_{\epsilon}}\overline{Y_{\epsilon}^{k}};$ 
and for $j=0,1,2,...,m$, we set $$I_{\epsilon}^{j}=\left\{k\in I_{\epsilon}:Y_{\epsilon}^{k}\subseteq\Omega_{j}\right\}\text{, and } J_{\epsilon}^{j}=\left\{k\in I_{\epsilon}:Y_{\epsilon}^{k}\cap\Omega_{j}\neq\emptyset, Y_{\epsilon}^{k}\setminus\Omega_{j}\neq\emptyset\right\}.$$
Furthermore, $\displaystyle E_{\epsilon}^{j}=\bigcup_{k\in I_{\epsilon}^{j}} \overline{Y_{\epsilon}^{k}}$, 
$\displaystyle F_{\epsilon}^{j}=\bigcup_{k\in J_{\epsilon}^{j}} \overline{Y_{\epsilon}^{k}}$, and 
as $\epsilon\rightarrow0$, we have $\left|F_{\epsilon}^{j}\right|\rightarrow0$. 
		
Set $$\displaystyle \xi_{\epsilon}^{k}=\frac{1}{\left|Y_{\epsilon}^{k}\right|}\int_{Y_{\epsilon}^{k}}\varphi(y)dy.$$
For $\epsilon$ sufficiently small $\Omega_{j}$ ($j\neq0$) is contained in $\Omega_{\epsilon}$.

From (\ref{Psi}), (\ref{approximation}), using the fact that $\Omega_{j}\subset E_{\epsilon}^{j}\cup F_{\epsilon}^{j}$, 
Lemma~\ref{lemma2}, and H\"older's inequality it follows that
\begin{align}
\label{lemma3-1}
	\displaystyle
& \int_{\Omega}\chi_{1}^{\epsilon}(x)\left|P_{\epsilon}(x,M_{\epsilon}\varphi)-P_{\epsilon}(x,\Psi)\right|^{p_{1}}dx + \int_{\Omega}\chi_{2}^{\epsilon}(x)\left|P_{\epsilon}(x,M_{\epsilon}\varphi)-P_{\epsilon}(x,\Psi)\right|^{p_{2}}dx\notag\\
	& \leq
C\left[\left(\int_{\Omega}\chi_{1}^{\epsilon}(x)\left|M_{\epsilon}\varphi-\varphi\right|^{p_{1}}dx + \int_{\Omega}\chi_{1}^{\epsilon}(x)\left|\varphi-\Psi\right|^{p_{1}}dx\right)^{\frac{1}{3-p_{1}}}\right.\\
&\left.\quad\times\left(\int_{\Omega}\chi_{1}^{\epsilon}(x)\left|M_{\epsilon}\varphi-\varphi\right|^{p_{1}}dx +
\int_{\Omega}\chi_{1}^{\epsilon}(x)\left|\varphi\right|^{p_{1}}dx + \int_{\Omega}\chi_{2}^{\epsilon}(x)\left|M_{\epsilon}\varphi-\varphi\right|^{p_{2}}dx\right.\right.\notag\\
	&\left.\quad+ 
\int_{\Omega}\chi_{2}^{\epsilon}(x)\left|\varphi\right|^{p_{2}}dx + \int_{\Omega}\chi_{1}^{\epsilon}(x)\left|\Psi\right|^{p_{1}}dx + \int_{\Omega}\chi_{2}^{\epsilon}(x)\left|\Psi\right|^{p_{2}}dx+\left|\Omega\right|\right)^{\frac{2-p_{1}}{3-p_{1}}}\notag\\
&+\left(\int_{\Omega}\chi_{1}^{\epsilon}(x)\left|M_{\epsilon}\varphi-\varphi\right|^{p_{1}}dx +
\int_{\Omega}\chi_{1}^{\epsilon}(x)\left|\varphi\right|^{p_{1}}dx + \int_{\Omega}\chi_{2}^{\epsilon}(x)\left|M_{\epsilon}\varphi-\varphi\right|^{p_{2}}dx\right.\notag\\
	&\left.\quad + 
\int_{\Omega}\chi_{2}^{\epsilon}(x)\left|\varphi\right|^{p_{2}}dx + \int_{\Omega}\chi_{1}^{\epsilon}(x)\left|\Psi\right|^{p_{1}}dx + \int_{\Omega}\chi_{2}^{\epsilon}(x)\left|\Psi\right|^{p_{2}}dx+\left|\Omega\right|\right)^{\frac{p_{2}(2-p_{1})}{p_{2}-p_{1}p_{2}+p_{1}}}\notag\\
	& \quad\times
\left(\int_{\Omega}\chi_{2}^{\epsilon}(x)\left|M_{\epsilon}\varphi-\varphi\right|^{p_{2}}dx + \int_{\Omega}\chi_{2}^{\epsilon}(x)\left|\varphi-\Psi\right|^{p_{2}}dx\right)^{\frac{p_{1}}{p_{2}-p_{1}p_{2}+p_{1}}}\notag\\
	&+\left( \int_{\Omega}\chi_{1}^{\epsilon}(x)\left|M_{\epsilon}\varphi-\varphi\right|^{p_{1}}dx +
\int_{\Omega}\chi_{1}^{\epsilon}(x)\left|\varphi\right|^{p_{1}}dx + \int_{\Omega}\chi_{2}^{\epsilon}(x)\left|M_{\epsilon}\varphi-\varphi\right|^{p_{2}}dx\right.\notag
&\intertext{}
	&\left.\quad + 
\int_{\Omega}\chi_{2}^{\epsilon}(x)\left|\varphi\right|^{p_{2}}dx + \int_{\Omega}\chi_{1}^{\epsilon}(x)\left|\Psi\right|^{p_{1}}dx + \int_{\Omega}\chi_{2}^{\epsilon}(x)\left|\Psi\right|^{p_{2}}dx+\left|\Omega\right|\right)^{\frac{p_{1}(2-p_{2})}{p_{1}-p_{1}p_{2}+p_{2}}}\notag\\
	& \quad\times
\left(\int_{\Omega}\chi_{1}^{\epsilon}(x)\left|M_{\epsilon}\varphi-\varphi\right|^{p_{1}}dx + \int_{\Omega}\chi_{1}^{\epsilon}(x)\left|\varphi-\Psi\right|^{p_{1}}dx\right)^{\frac{p_{2}}{p_{1}-p_{1}p_{2}+p_{2}}}\notag\\
	&+\left( \int_{\Omega}\chi_{1}^{\epsilon}(x)\left|M_{\epsilon}\varphi-\varphi\right|^{p_{1}}dx +
\int_{\Omega}\chi_{1}^{\epsilon}(x)\left|\varphi\right|^{p_{1}}dx + \int_{\Omega}\chi_{2}^{\epsilon}(x)\left|M_{\epsilon}\varphi-\varphi\right|^{p_{2}}dx\right.\notag\\
	&\quad\left.+ 
\int_{\Omega}\chi_{2}^{\epsilon}(x)\left|\varphi\right|^{p_{2}}dx + \int_{\Omega}\chi_{1}^{\epsilon}(x)\left|\Psi\right|^{p_{1}}dx + \int_{\Omega}\chi_{2}^{\epsilon}(x)\left|\Psi\right|^{p_{2}}dx+\left|\Omega\right|\right)^{\frac{2-p_{2}}{3-p_{2}}}\notag\\
& \left.\quad\times
\left(\int_{\Omega}\chi_{2}^{\epsilon}(x)\left|M_{\epsilon}\varphi-\varphi\right|^{p_{2}}dx + \int_{\Omega}\chi_{2}^{\epsilon}(x)\left|\varphi-\Psi\right|^{p_{2}}dx\right)^{\frac{1}{3-p_{2}}}\right]\notag\\
&+C\sum_{j=0}^{m}\left[\left(\int_{F_{\epsilon}^{j}}\theta_{1}\left|\sum_{k\in J_{\epsilon}^{j}}\chi_{Y_{\epsilon}^{k}}\xi_{\epsilon}^{k}-\eta_{j}\right|^{p_{1}}dx\right)^{\frac{1}{3-p_{1}}}\left(\left|F_{\epsilon}^{j}\right| + \int_{F_{\epsilon}^{j}}\left|M_{\epsilon}\varphi\right|^{p_{1}}\theta_{1}dx \right.\right.\notag\\
	&\quad\left.+ \int_{F_{\epsilon}^{j}}\left|M_{\epsilon}\varphi\right|^{p_{2}}\theta_{2}dx+ 
\left|\eta_{j}\right|^{p_{1}}\theta_{1}\left|F_{\epsilon}^{j}\right| + \left|\eta_{j}\right|^{p_{2}}\theta_{2}\left|F_{\epsilon}^{j}\right|\right)^{\frac{2-p_{1}}{3-p_{1}}}\notag\\
	&+\left(\int_{F_{\epsilon}^{j}}\theta_{2}\left|\sum_{k\in J_{\epsilon}^{j}}\chi_{Y_{\epsilon}^{k}}\xi_{\epsilon}^{k}-\eta_{j}\right|^{p_{2}}dx\right)^{\frac{p_{1}}{2p_{2}-p_{1}p_{2}+p_{1}}} 
\left(\left|F_{\epsilon}^{j}\right| + \int_{F_{\epsilon}^{j}}\left|M_{\epsilon}\varphi\right|^{p_{1}}\theta_{1}dx\right.\notag\\
	&\quad\left. + \int_{F_{\epsilon}^{j}}\left|M_{\epsilon}\varphi\right|^{p_{2}}\theta_{2}dx+ 
\left|\eta_{j}\right|^{p_{1}}\theta_{1}\left|F_{\epsilon}^{j}\right| + \left|\eta_{j}\right|^{p_{2}}\theta_{2}\left|F_{\epsilon}^{j}\right|\right)^{\frac{p_{2}(2-p_{1})}{2p_{2}-p_{1}p_{2}+p_{1}}} \notag\\
	&+\left(\int_{F_{\epsilon}^{j}}\theta_{1}\left|\sum_{k\in J_{\epsilon}^{j}}\chi_{Y_{\epsilon}^{k}}\xi_{\epsilon}^{k}-\eta_{j}\right|^{p_{1}}dx\right)^{\frac{p_{2}}{2p_{1}-p_{1}p_{2}+p_{2}}} 
\left(\left|F_{\epsilon}^{j}\right| + \int_{F_{\epsilon}^{j}}\left|M_{\epsilon}\varphi\right|^{p_{1}}\theta_{1}dx \right.\notag\\
	&\quad\left.+ \int_{F_{\epsilon}^{j}}\left|M_{\epsilon}\varphi\right|^{p_{2}}\theta_{2}dx+ 
\left|\eta_{j}\right|^{p_{1}}\theta_{1}\left|F_{\epsilon}^{j}\right| + \left|\eta_{j}\right|^{p_{2}}\theta_{2}\left|F_{\epsilon}^{j}\right|\right)^{\frac{p_{1}(2-p_{2})}{2p_{1}-p_{1}p_{2}+p_{2}}}\notag\\
	&+\left(\int_{F_{\epsilon}^{j}}\theta_{2}\left|\sum_{k\in J_{\epsilon}^{j}}\chi_{Y_{\epsilon}^{k}}\xi_{\epsilon}^{k}-\eta_{j}\right|^{p_{2}}dx\right)^{\frac{1}{3-p_{2}}} 
\left(\left|F_{\epsilon}^{j}\right| + \int_{F_{\epsilon}^{j}}\left|M_{\epsilon}\varphi\right|^{p_{1}}\theta_{1}dx \right.\notag\\	
	&\quad\left.\left.+ \int_{F_{\epsilon}^{j}}\left|M_{\epsilon}\varphi\right|^{p_{2}}\theta_{2}dx+ 
\left|\eta_{j}\right|^{p_{1}}\theta_{1}\left|F_{\epsilon}^{j}\right| + \left|\eta_{j}\right|^{p_{2}}\theta_{2}\left|F_{\epsilon}^{j}\right|\right)^{\frac{2-p_{2}}{3-p_{2}}}\right]\notag
\end{align}

Since $\left|\partial\Omega_{j}\right|=0$ for $j\neq0$, we have that $\left|F_{\epsilon}^{j}\right|\rightarrow0$ 
as $\epsilon\rightarrow0$, for every $j=0,1,2,...,m$.
	
By Property~(1) of $M_{\epsilon}$ mentioned in Section~\ref{CorrectorSection}, we have $$\displaystyle\int_{\Omega}\chi_{i}^{\epsilon}(x)\left|M_{\epsilon}\varphi(x)-\varphi(x)\right|^{p_{i}}dx\rightarrow0, \text{ as $\epsilon\rightarrow0$, for $i=1,2$.}$$ 
	
Therefore, taking $\limsup$ as $\epsilon\rightarrow0$ in (\ref{lemma3-1}), we obtain (\ref{lemma3formula}).

\subsection{Proof of Lemma~\ref{proofaprioribound}}
Evaluating $u_{\epsilon}$ in the weak formulation for (\ref{Dirichlet}), applying H\"older's inequality, and 
since $f\in W^{-1,q_{2}}(\Omega)$, we obtain 
\begin{align}
	\label{aprioriboundproof1}
	\displaystyle	
	&\int_{\Omega}(A_{\epsilon}(x,\nabla u_{\epsilon}),\nabla u_{\epsilon})dx=\sigma_{1}\int_{\Omega}\chi_{1}^{\epsilon}(x)\left|\nabla u_{\epsilon}\right|^{p_{1}}dx+\sigma_{2}\int_{\Omega}\chi_{2}^{\epsilon}(x)\left|\nabla u_{\epsilon}\right|^{p_{2}}dx\\
	&\quad =\left\langle f, u_{\epsilon}\right\rangle\leq C\left[\left(\int_{\Omega}\chi_{1}^{\epsilon}(x)\left|\nabla u_{\epsilon}\right|^{p_{1}}dx\right)^{\frac{1}{p_{1}}}+\left(\int_{\Omega}\chi_{2}^{\epsilon}(x)\left|\nabla u_{\epsilon}\right|^{p_{2}}dx\right)^{\frac{1}{p_{2}}}\right]\notag
\end{align}
Applying Young's inequality to the last term in (\ref{aprioriboundproof1}), we obtain	
\begin{align}
	\label{aprioriboundproof2}
	\displaystyle
	& \sigma_{1}\int_{\Omega}\chi_{1}^{\epsilon}(x)\left|\nabla u_{\epsilon}\right|^{p_{1}}dx+\sigma_{2}\int_{\Omega}\chi_{2}^{\epsilon}(x)\left|\nabla u_{\epsilon}\right|^{p_{2}}dx\\
	& \quad \leq C\left[\frac{\delta^{p_{1}}}{p_{1}}\int_{\Omega}\chi_{1}^{\epsilon}(x)\left|\nabla u_{\epsilon}\right|^{p_{1}}dx+\frac{\delta^{-q_{2}}}{q_{2}}+\frac{\delta^{p_{2}}}{p_{2}}\int_{\Omega}\chi_{2}^{\epsilon}(x)\left|\nabla u_{\epsilon}\right|^{p_{2}}dx+\frac{\delta^{-q_{1}}}{q_{1}}\right]\notag
\end{align}	
By rearranging the terms in (\ref{aprioriboundproof2}), one gets
\begin{align*}
	\displaystyle
&\left(\sigma_{1}-C\frac{\delta^{p_{1}}}{p_{1}}\right)\int_{\Omega}\chi_{1}^{\epsilon}(x)\left|\nabla u_{\epsilon}\right|^{p_{1}}dx+\left(\sigma_{2}-C\frac{\delta^{p_{2}}}{p_{2}}\right)\int_{\Omega}\chi_{2}^{\epsilon}(x)\left|\nabla u_{\epsilon}\right|^{p_{2}}dx\\
	&\quad\leq \frac{\delta^{-q_{2}}}{q_{2}}+\frac{\delta^{-q_{1}}}{q_{1}}.
\end{align*}
Therefore, by choosing $\delta$ small enough so that $\min\left\{\sigma_{1}-C\frac{\delta^{p_{1}}}{p_{1}},\sigma_{2}-C\frac{\delta^{p_{2}}}{p_{2}}\right\}$ is positive, one obtains $$\int_{\Omega}\chi_{1}^{\epsilon}(x)\left|\nabla u_{\epsilon}(x)\right|^{p_{1}}dx+\int_{\Omega}\chi_{2}^{\epsilon}(x)\left|\nabla u_{\epsilon}(x)\right|^{p_{2}}dx\leq C.$$

\subsection{Proof of Lemma~\ref{uniform boundedness of p at M}}
Using (\ref{approximation}), we have
\begin{align*}
	\displaystyle 
	& 
\int_{\Omega}\chi_{1}^{\epsilon}(x)\left|P_{\epsilon}(x,M_{\epsilon}\nabla u(x))\right|^{p_{1}}dx + \int_{\Omega}\chi_{2}^{\epsilon}(x)\left|P_{\epsilon}(x,M_{\epsilon}\nabla u(x))\right|^{p_{2}}dx\\
	&\quad = \sum_{k\in\textbf{I}_{\epsilon}}\left[\int_{Y_{\epsilon}^{k}}\chi_{1}^{\epsilon}(x)\left|P_{\epsilon}(x,\xi_{\epsilon}^{k})\right|^{p_{1}}dx + \int_{Y_{\epsilon}^{k}}\chi_{2}^{\epsilon}(x)\left|P_{\epsilon}(x,\xi_{\epsilon}^{k})\right|^{p_{2}}dx\right]\\
	&\quad \leq   C\sum_{k\in\textbf{I}_{\epsilon}}\left(\left|Y_{\epsilon}^{k}\right|+\left|\xi_{\epsilon}^{k}\right|^{p_{1}}\theta_{1}\left|Y_{\epsilon}^{k}\right|+\left|\xi_{\epsilon}^{k}\right|^{p_{2}}\theta_{2}\left|Y_{\epsilon}^{k}\right|\right)\\
	&\quad \leq C\left(\left|\Omega\right|+\left\|\nabla u\right\|^{p_{1}}_{\textbf{L}^{p_{1}}(\Omega)}+\left\|\nabla u\right\|^{p_{2}}_{\textbf{L}^{p_{2}}(\Omega)}\right)<\infty,
	\end{align*}
where the last three inequalities follow from Lemma~\ref{lemma1}, Jensen's inequality, and Theorem~\ref{regularity}.

\subsection{Proof of Lemma~\ref{monconb}}
We prove properties (\ref{Monb}) and (\ref{Conb}) of the homogenized operator $b$.  Property (\ref{Conb2}), which occurs in the case when $1<p_{1}\leq2\leq p_{2}$, follows in a similar way.  
\subsubsection{Proof of (\ref{Monb})}
Using (\ref{cell}) and (\ref{MonA}), we have
\begin{align*}
\displaystyle 
	&\left(b(\xi_{2}) - b(\xi_{1}),\xi_{2}-\xi_{1}\right)\\
&\quad=\int_{Y}\left(A(y,P(y,\xi_{2}))-A(y,P(y,\xi_{1})),P(y,\xi_{2})-P(y,\xi_{1})\right)dy\\	
	&\quad\geq C\int_{Y}\left|P(y,\xi_{1})-P(y,\xi_{2})\right|^{\beta(y)}\left(\left|P(y,\xi_{1})\right|+\left|P(y,\xi_{2})\right|\right)^{p(y)-\beta(y)}dy\geq 0. 
\end{align*}
\subsubsection{Proof of (\ref{Conb})}
Note that 
\begin{align}
\label{conb1}
\displaystyle
&\int_{Y}\chi_{1}(y)\left|P(y,\xi_{1})-P(y,\xi_{2})\right|^{2}\left(\left|P(y,\xi_{1})\right|+\left|P(y,\xi_{2})\right|\right)^{p_{1}-2}dy\notag\\
	&\quad +\int_{Y}\chi_{2}(y)\left|P(y,\xi_{1})-P(y,\xi_{2})\right|^{2}\left(\left|P(y,\xi_{1})\right|+\left|P(y,\xi_{2})\right|\right)^{p_{2}-2}dy\\
	&\qquad \leq C\int_{Y}\left(A(y,P(y,\xi_{2}))-A(y,P(y,\xi_{1})),P(y,\xi_{2})-P(y,\xi_{1})\right)dy\notag\\
	&\qquad = C\int_{Y}\left(A(y,P(y,\xi_{2}))-A(y,P(y,\xi_{1})),\xi_{2}-\xi_{1}\right)dy\notag\\
	&\qquad =\left(b(\xi_{2})-b(\xi_{1}),\xi_{2}-\xi_{1}\right)\leq \left|b(\xi_{2})-b(\xi_{1})\right|\left|\xi_{2}-\xi_{1}\right|\notag
\end{align}
by (\ref{MonA}), (\ref{cell}), and (\ref{b}).
Also, using (\ref{ConA}), we obtain 
\begin{align*}
	\displaystyle 
	& \left|b(\xi_{1}) - b(\xi_{2})\right|\\
	&\leq\int_{Y}\left|A(y,P(y,\xi_{1}))-A(y,P(y,\xi_{2}))\right|dy \\
	&\leq C\left[\int_{Y}\chi_{1}(y)\left|P(y,\xi_{1})-P(y,\xi_{2})\right|^{p_{1}-1}dy + \int_{Y}\chi_{2}(y)\left|P(y,\xi_{1})-P(y,\xi_{2})\right|^{p_{2}-1}dy \right]\\
	& =C\left[\int_{Y}\frac{\chi_{1}(y)\left|P(y,\xi_{1})-P(y,\xi_{2})\right|^{p_{1}-1}\left(\left|P(x,\xi_{1})\right|+\left|P(x,\xi_{2})\right|\right)^{\frac{(2-p_{1})(p_{1}-1)}{2}}}{\left(\left|P(x,\xi_{1})\right|+\left|P(x,\xi_{2})\right|\right)^{\frac{(2-p_{1})(p_{1}-1)}{2}}}dy\right.\\
	&\quad\left.+ \int_{Y}\frac{\chi_{2}(y)\left|P(y,\xi_{1})-P(y,\xi_{2})\right|^{p_{2}-1}\left(\left|P(x,\xi_{1})\right|+\left|P(x,\xi_{2})\right|\right)^{\frac{(2-p_{2})(p_{2}-1)}{2}}}{\left(\left|P(x,\xi_{1})\right|+\left|P(x,\xi_{2})\right|\right)^{\frac{(2-p_{2})(p_{2}-1)}{2}}}dy \right]
	&\intertext{Using H\"older's inequality with $r_{i}=\frac{2}{p_{i}-1}>1$ and $\frac{1}{s_{i}}=1-\frac{1}{r_{i}}=\frac{3-p_{i}}{2}$, for $i=1,2$}
	&\leq  C\left[\left(\int_{Y}\frac{\chi_{1}(y)\left|P(y,\xi_{1})-P(y,\xi_{2})\right|^{2}}{\left(\left|P(x,\xi_{1})\right|+\left|P(x,\xi_{2})\right|\right)^{2-p_{1}}}dy\right)^{\frac{p_{1}-1}{2}}\right.\\
	&\qquad\times \left(\int_{Y}\chi_{1}(y)\left(\left|P(x,\xi_{1})\right|+\left|P(x,\xi_{2})\right|\right)^{\frac{(2-p_{1})(p_{1}-1)}{3-p_{1}}}dy\right)^{\frac{3-p_{1}}{2}}\\
	&\quad+ \left(\int_{Y}\frac{\chi_{2}(y)\left|P(y,\xi_{1})-P(y,\xi_{2})\right|^{2}}{\left(\left|P(x,\xi_{1})\right|+\left|P(x,\xi_{2})\right|\right)^{2-p_{2}}}dy\right)^{\frac{p_{2}-1}{2}}\\
	&\qquad\left.\times \left(\int_{Y}\chi_{2}(y)\left(\left|P(x,\xi_{1})\right|+\left|P(x,\xi_{2})\right|\right)^{\frac{(2-p_{2})(p_{2}-1)}{3-p_{2}}}dy\right)^{\frac{3-p_{2}}{2}} \right]
	&\intertext{Using H\"older's inequality with $r_{i}=\frac{p_{i}(3-p_{i})}{(p_{i}-1)(2-p_{i})}>1$, for $i=1,2$}
	&\leq C\left[\left(\int_{Y}\frac{\chi_{1}(y)\left|P(y,\xi_{1})-P(y,\xi_{2})\right|^{2}}{\left(\left|P(x,\xi_{1})\right|+\left|P(x,\xi_{2})\right|\right)^{2-p_{1}}}dy\right)^{\frac{p_{1}-1}{2}}\right.\\
&\qquad\times\left(\int_{Y}\chi_{1}(y)\left(\left|P(x,\xi_{1})\right|+\left|P(x,\xi_{2})\right|\right)^{p_{1}}dy\right)^{\frac{(2-p_{1})(p_{1}-1)}{2p_{1}}}\\
	&\quad+ \left(\int_{Y}\frac{\chi_{2}(y)\left|P(y,\xi_{1})-P(y,\xi_{2})\right|^{2}}{\left(\left|P(x,\xi_{1})\right|+\left|P(x,\xi_{2})\right|\right)^{2-p_{2}}}dy\right)^{\frac{p_{2}-1}{2}}\\
	&\qquad\left.\times \left(\int_{Y}\chi_{2}(y)\left(\left|P(x,\xi_{1})\right|+\left|P(x,\xi_{2})\right|\right)^{p_{2}}dy\right)^{\frac{(2-p_{2})(p_{2}-1)}{2p_{2}}}\right]
	&\intertext{By (\ref{conb1}) and Lemma (\ref{lemma1})}
	&\leq C\sum_{i=1}^2\left[\left|b(\xi_{2})-b(\xi_{1})\right|^{\frac{p_{i}-1}{2}}\left|\xi_{2}-\xi_{1}\right|^{\frac{p_{i}-1}{2}}\right.\\
	&\quad\left.\times	\left(1+\theta_{1}\left|\xi_{1}\right|^{p_{1}}+\theta_{2}\left|\xi_{1}\right|^{p_{2}}+\theta_{1}\left|\xi_{2}\right|^{p_{1}}+\theta_{2}\left|\xi_{2}\right|^{p_{2}}\right)^{\frac{(2-p_{i})(p_{i}-1)}{2p_{i}}}\right]
	&\intertext{By Young's Inequality with $r_{i}=\frac{2}{p_{i}-1}>1$ with $i=1,2$}
	&\leq C\sum_{i=1}^{2}\left[\frac{\delta^{\frac{2}{p_{i}-1}}\left|b(\xi_{2})-b(\xi_{1})\right|}{\frac{2}{p_{i}-1}}+\right.\\
&\quad\left.\frac{\delta^{\frac{-2}{3-p_{i}}}\left|\xi_{2}-\xi_{1}\right|^{\frac{p_{i}-1}{3-p_{i}}}\left(1+\theta_{1}\left|\xi_{1}\right|^{p_{1}}+\theta_{2}\left|\xi_{1}\right|^{p_{2}}+\theta_{1}\left|\xi_{2}\right|^{p_{1}}+\theta_{2}\left|\xi_{2}\right|^{p_{2}}\right)^{\frac{(2-p_{i})(p_{i}-1)}{p_{i}(3-p_{i})}}}{\frac{2}{3-p_{i}}}\right]			
\end{align*}
Rearranging the terms, and taking $\delta$ small enough we obtain (\ref{Conb})

\subsection{Proof of Lemma~\ref{unifboundDP}}
Using H\"older's inequality, (\ref{ConA}), and (\ref{aprioribound}), we obtain
\begin{align*}	
	\displaystyle 
&\int_{\Omega_{j}}\left|\left(A_{\epsilon}\left(x,P_{\epsilon}\left(x,\eta_{j}\right)\right),\nabla u_{\epsilon}(x)\right)\right|dx\leq \int_{\Omega_{j}}\left|A_{\epsilon}\left(x,P_{\epsilon}\left(x,\eta_{j}\right)\right)\right|\left|\nabla u_{\epsilon}(x)\right|dx\\
	&\quad \leq C\left[\left(\int_{\Omega_{j}}\chi_{1}^{\epsilon}(x)\left|P_{\epsilon}\left(x,\eta_{j}\right)\right|^{p_{1}}dx\right)^{\frac{1}{q_{2}}}+\left(\int_{\Omega_{j}}\chi_{2}^{\epsilon}(x)\left|P_{\epsilon}\left(x,\eta_{j}\right)\right|^{p_{2}}dx\right)^{\frac{1}{q_{1}}}\right]\\
	&\quad\leq C \text{, where $C$ does not depend on $\epsilon$}.	
\end{align*}
The proof of the uniform boundedness of $\displaystyle \int_{\Omega_{j}}\left|\left(A_{\epsilon}\left(x,\nabla u_{\epsilon}(x)\right),P_{\epsilon}\left(x,\eta_{j}\right)\right)\right|dx$ follows in the same manner.

\subsection{Proof of Lemma~\ref{dunfordpettis}}
We prove the first statement of the lemma, the second statement follows in a similar way.  The lemma follows from the Dunford-Pettis~theorem (see \cite{Dacorogna1989}).  To apply this theorem, the following conditions are necessary:
\begin{itemize}
	\item $\displaystyle \int_{\Omega_{j}}\left|\left(A_{\epsilon}\left(x,P_{\epsilon}\left(x,\eta_{j}\right)\right),\nabla u_{\epsilon}(x)\right)\right|dx$ 
	is uniformly bounded with respect to $\epsilon$, which was proved in Lemma~\ref{unifboundDP}.
	\item $\left(A_{\epsilon}\left(\cdot,P_{\epsilon}\left(\cdot,\eta_{j}\right)\right),\nabla u_{\epsilon}(\cdot)\right)$ 	is equiintegrable for all $j=0,..,m$
	\begin{proof}
	By Theorem~1.5 in \cite{Dacorogna1989}, we have that $\chi_{1}^{\epsilon}(\cdot)\left|A_{\epsilon}\left(\cdot,P_{\epsilon}\left(\cdot,\eta_{j}\right)\right)\right|^{q_{2}}$ and $\chi_{2}^{\epsilon}(\cdot)\left|A_{\epsilon}\left(\cdot,P_{\epsilon}\left(\cdot,\eta_{j}\right)\right)\right|^{q_{1}}$ are equiintegrable.
	
	By (\ref{aprioribound}), for any $E\subset\Omega$, we have $$\max_{i=1,2}\left\{\sup_{\epsilon>0}\left\{\left(\int_{E}\chi_{i}^{\epsilon}(x)\left|\nabla u_{\epsilon}(x)\right|^{p_{i}}dx\right)^{\frac{1}{p_{i}}}\right\}\right\}\leq C.$$
	Let $\lambda>0$ arbitrary and choose $\lambda_{1}>0$ and $\lambda_{2}>0$ such that $\lambda_{1}^{\frac{1}{q_{2}}}+\lambda_{2}^{\frac{1}{q_{1}}}<\lambda/C$.
	
	For $\lambda_{1}$ and $\lambda_{2}$, there exist $\omega(\lambda_{1})>0$ and  $\omega(\lambda_{2})>0$ such that for every $E\subset\Omega$ with $\left|E\right|<\min\left\{\omega(\lambda_{1}),\omega(\lambda_{2})\right\}$, we have $$\int_{E}\chi_{1}^{\epsilon}(x)\left|A_{\epsilon}\left(x,P_{\epsilon}\left(x,\eta_{j}\right)\right)\right|^{q_{2}}dx<\lambda_{1} \text{\hspace{2mm} and \hspace{2mm}} \int_{E}\chi_{2}^{\epsilon}(x)\left|A_{\epsilon}\left(x,P_{\epsilon}\left(x,\eta_{j}\right)\right)\right|^{q_{1}}dx<\lambda_{2}$$
	Take $\omega=\omega(\lambda)=\min\left\{\omega(\lambda_{1}),\omega(\lambda_{2})\right\}$.  Then, for all $E\subset\Omega$ with $\left|E\right|<\omega$, we have
\begin{align*}
	\displaystyle 
&\int_{E}\left|\left(A_{\epsilon}\left(x,P_{\epsilon}\left(x,\eta_{j}\right)\right),\nabla u_{\epsilon}\right)\right|dx\leq \int_{E}\left|A_{\epsilon}\left(x,p_{\epsilon}\left(x,\eta_{j}\right)\right)\right|\left|\nabla u_{\epsilon}\right|dx\\
	&\quad\leq \left(\int_{E}\chi_{1}^{\epsilon}(x)\left|A_{\epsilon}\left(x,P_{\epsilon}\left(x,\eta_{j}\right)\right)\right|^{q_{2}}dx\right)^{\frac{1}{q_{2}}}\left(\int_{E}\chi_{1}^{\epsilon}(x)\left|\nabla u_{\epsilon}\right|^{p_{1}}dx\right)^{\frac{1}{p_{1}}}\\
	&\qquad+ \left(\int_{E}\chi_{2}^{\epsilon}(x)\left|A_{\epsilon}\left(x,P_{\epsilon}\left(x,\eta_{j}\right)\right)\right|^{q_{1}}dx\right)^{\frac{1}{q_{1}}}\left(\int_{E}\chi_{2}^{\epsilon}(x)\left|\nabla u_{\epsilon}\right|^{p_{2}}dx\right)^{\frac{1}{p_{2}}}\\
	&\quad\leq C(\lambda_{1}^{1/q_{2}}+\lambda_{2}^{1/q_{1}}) < \lambda,
\end{align*}
for every $\lambda>0$, and so $\left(A_{\epsilon}\left(\cdot,P_{\epsilon}\left(\cdot,\eta_{j}\right)\right),\nabla u_{\epsilon}\right)$ is equiintegrable. 	
\end{proof}  
\end{itemize}

\subsection{Case $1<p_{1}\leq 2\leq p_{2}$: Proof of Theorem~\ref{corrector} in Section~\ref{main}.}
\label{appendixcorrector}
The proof of Theorem~\ref{corrector} for the case $1<p_{1}\leq 2\leq p_{2}$ is very similar to the one presented in Section~\ref{main}.  Here we will indicate only the main differences in the different parts of the proof.  

We have by (\ref{MonA}), Lemma~\ref{proofaprioribound}, and Lemma~{\ref{uniform boundedness of p at M}} that
\begin{align*}
	\displaystyle	
	& 
\int_{\Omega}\left[\chi_{1}^{\epsilon}(x)\left|P_{\epsilon}\left(x,M_{\epsilon}\nabla u(x)\right)-\nabla u_{\epsilon}(x)\right|^{p_{1}} + \chi_{2}^{\epsilon}(x)\left|P_{\epsilon}\left(x,M_{\epsilon}\nabla u(x)\right)-\nabla u_{\epsilon}(x)\right|^{p_{2}}\right]dx\\
	&\leq C\left[\left(\left|\Omega\right|+\int_{\Omega}\chi_{1}^{\epsilon}(x)\left|P_{\epsilon}(x,M_{\epsilon}\nabla u(x))\right|^{p_{1}}dx+\int_{\Omega}\chi_{1}^{\epsilon}(x)\left|\nabla u_{\epsilon}(x)\right|^{p_{1}}dx\right)^{\frac{2-p_{1}}{2}}\right.\\
&\times\left(\int_{\Omega}\chi_{1}^{\epsilon}(x)\left(A_{\epsilon}\left(x,P_{\epsilon}\left(x,M_{\epsilon} \nabla u(x)\right)\right)-A_{\epsilon}\left(x,\nabla u_{\epsilon}(x)\right),P_{\epsilon}\left(x,M_{\epsilon}\nabla u(x)\right)-\nabla u_{\epsilon}(x)\right)dx\right)^{\frac{p_{1}}{2}}\\
	&\left.+ \int_{\Omega}\chi_{2}^{\epsilon}(x)\left(A_{\epsilon}\left(x,P_{\epsilon}\left(x,M_{\epsilon} \nabla u(x)\right)\right)-A_{\epsilon}\left(x,\nabla u_{\epsilon}(x)\right),P_{\epsilon}\left(x,M_{\epsilon}\nabla u(x)\right)-\nabla u_{\epsilon}(x)\right)dx\right]
	&\intertext{}
	&\leq C\left[\left(\int_{\Omega}\left(A_{\epsilon}\left(x,P_{\epsilon}\left(x,M_{\epsilon} \nabla u(x)\right)\right)-A_{\epsilon}\left(x,\nabla u_{\epsilon}(x)\right),P_{\epsilon}\left(x,M_{\epsilon}\nabla u(x)\right)-\nabla u_{\epsilon}(x)\right)dx\right)^{\frac{p_{1}}{2}}\right.\\
	&\left.+ \int_{\Omega}\left(A_{\epsilon}\left(x,P_{\epsilon}\left(x,M_{\epsilon} \nabla u(x)\right)\right)-A_{\epsilon}\left(x,\nabla u_{\epsilon}(x)\right),P_{\epsilon}\left(x,M_{\epsilon}\nabla u(x)\right)-\nabla u_{\epsilon}(x)\right)dx\right]	
\end{align*}
Therefore to prove Theorem~\ref{corrector} in this case, we also need to show that 
\begin{align*}
	\displaystyle
	& \int_{\Omega}\left(A_{\epsilon}\left(x,P_{\epsilon}\left(x,M_{\epsilon}\nabla u(x)\right)\right)-A_{\epsilon}\left(x,\nabla u_{\epsilon}(x)\right),P_{\epsilon}\left(x,M_{\epsilon}\nabla u(x)\right)-\nabla u_{\epsilon}(x)\right)dx\\
	&=\int_{\Omega}\left(A_{\epsilon}\left(x,P_{\epsilon}\left(x,M_{\epsilon}\nabla u\right)\right),P_{\epsilon}\left(x,M_{\epsilon}\nabla u\right)\right)dx-\int_{\Omega}\left(A_{\epsilon}\left(x,P_{\epsilon}\left(x,M_{\epsilon}\nabla u\right)\right),\nabla u_{\epsilon}\right)dx\\
	&\quad-\int_{\Omega}\left(A_{\epsilon}\left(x,\nabla u_{\epsilon}\right),P_{\epsilon}\left(x,M_{\epsilon}\nabla u\right)\right)dx+\int_{\Omega}\left(A_{\epsilon}\left(x,\nabla u_{\epsilon}\right),\nabla u_{\epsilon}\right)dx
\end{align*}
goes to 0 as $\epsilon\rightarrow0$ and this is done with the same four steps as in Section~\ref{main}.

In Step~1, by (\ref{Conb2}), H\"older's inequality, Theorem~\ref{regularity}, and Jensen's inequality, we obtain 
\begin{align*}
	\displaystyle
	&\int_{\Omega}\left|b(M_{\epsilon}\nabla u(x))-b(\nabla u(x))\right|^{q_{1}}dx\\
	&\quad\leq C\left[\left(\int_{\Omega}\left|M_{\epsilon}\nabla u-\nabla u\right|^{p_{2}}dx\right)^{\frac{p_{1}-1}{(p_{2}-1)(3-p_{1})}}+ \left(\int_{\Omega}\left|M_{\epsilon}\nabla u-\nabla u\right|^{p_{2}}dx\right)^{\frac{1}{(p_{2}-1)^2}}\right]			
\end{align*}

In Step~2, by (\ref{ConA}), H\"older's inequality, and (\ref{aprioribound}) we obtain (instead of (\ref{step2-1}))
\begin{align}
	\label{step2-1b}
	\displaystyle 
&\left|\int_{\Omega}\left(A_{\epsilon}\left(x,P_{\epsilon}\left(x,M_{\epsilon}\nabla u(x)\right)\right)-A_{\epsilon}\left(x,P_{\epsilon}\left(x,\Psi(x) \right)\right),\nabla u_{\epsilon}(x)\right)dx\right|\\
	&\quad\leq C\left[\left(\int_{\Omega}\chi_{1}^{\epsilon}(x)\left|P_{\epsilon}\left(x,M_{\epsilon}\nabla u\right)-P_{\epsilon}\left(x,\Psi \right)\right|^{p_{1}}dx\right)^{\frac{p_{1}-1}{p_{1}}}\right.\notag\\
&\qquad\left.+\left(\int_{\Omega}\chi_{2}^{\epsilon}(x)\left|P_{\epsilon}\left(x,M_{\epsilon}\nabla u\right)-P_{\epsilon}\left(x,\Psi \right)\right|^{p_{2}}dx\right)^{\frac{1}{p_{2}}}\right]\notag
\end{align}

Applying Lemma~\ref{lemma3} and (\ref{approximation with simple function of Du}) to (\ref{step2-1b}), we discover that	
\begin{align}
	\label{step2-3b}
	\displaystyle & \limsup_{\epsilon\rightarrow0}\left|\int_{\Omega}\left(A_{\epsilon}\left(x,P_{\epsilon}\left(x,M_{\epsilon}\nabla u(x)\right)\right)-A_{\epsilon}\left(x,P_{\epsilon}\left(x,\Psi(x)\right)\right),\nabla u_{\epsilon}(x)\right)dx\right|\notag\\
	&\quad \leq C\left[\left(\delta^{\frac{p_{1}}{3-p_{1}}}+\delta^{\frac{p_{1}p_{2}}{2p_{2}-p_{1}}}+\delta^{p_{1}}+\delta^{\frac{p_{2}}{p_{2}-1}}\right)^\frac{p_{1}-1}{p_{1}}\right.\\
	&\qquad\left. +\left(\delta^{\frac{p_{1}}{3-p_{1}}}+\delta^{\frac{p_{1}p_{2}}{2p_{2}-p_{1}}}+\delta^{p_{1}}+\delta^{\frac{p_{2}}{p_{2}-1}}\right)^\frac{1}{p_{2}}\right],\notag
\end{align}
where $C$ is independent of $\delta$.  Since $\delta$ is arbitrary we conclude that the limit on the left hand side of (\ref{step2-3b}) is equal to $0$.

Finally, using the continuity of $b$ (\ref{Conb2}) in Lemma~\ref{monconb}, Theorem~\ref{regularity}, and H\"older's inequality, we obtain $$\left|\int_{\Omega}\left(b(\nabla u(x))-b(\Psi(x)),\nabla u(x)\right)dx\right|\leq C\left[\delta^{\frac{p_{2}(p_{1}-1)}{(p_{2}-1)(3-p_{1})}}+\delta^{\frac{p_{2}}{(p_{2}-1)^2}}\right]^{\frac{1}{q_{1}}},$$
where $C$ does not depend on $\delta$. 

In Step~3, we have 
\begin{align*}
&\limsup_{\epsilon\rightarrow0}\left|\int_{\Omega}\left(A_{\epsilon}\left(x,\nabla u_{\epsilon}\right),P_{\epsilon}\left(x,M_{\epsilon}\nabla u\right)-P_{\epsilon}\left(x,\Psi\right)\right)dx\right|\\
	&\quad \leq C\sum_{i=1}^{2}\left[\delta^{\frac{p_{1}}{3-p_{1}}}+\delta^{\frac{p_{1}p_{2}}{2p_{2}-p_{1}}}+\delta^{p_{1}}+\delta^{\frac{p_{2}}{p_{2}-1}}\right]^{\frac{1}{p_{i}}},
\end{align*} 		
where $C$ does not depend on $\delta$.

Hence, proceeding as in Step~2, we find that
\begin{align*}
	\displaystyle				
	& \limsup_{\epsilon\rightarrow0}\left|\int_{\Omega}\left(A_{\epsilon}\left(x,\nabla u_{\epsilon}\right),P_{\epsilon}\left(x,M_{\epsilon} \nabla u\right)\right)dx-\int_{\Omega}\left(b(\nabla u),\nabla u\right)dx\right|\\
	&\leq C\left[\sum_{i=1}^{2}\left(\delta^{\frac{p_{1}}{3-p_{1}}}+\delta^{\frac{p_{1}p_{2}}{2p_{2}-p_{1}}}+\delta^{p_{1}}+\delta^{\frac{p_{2}}{p_{2}-1}}\right)^{\frac{1}{p_{i}}}+\delta\left\|b(\nabla u)\right\|_{L^{q_{2}}(\Omega,\mathbb{R}^{n})}\right],
\end{align*}
where $C$ is independent of $\delta$.  Now since $\delta$ is arbitrarily small, the proof of Step~3 is complete.
\bibliographystyle{plain}	
\bibliography{paper}
\end{document}